\let\emph\relax 
\DeclareTextFontCommand{\emph}{\bfseries\em}
\newtheorem{thm}{Theorem}[section]
\newtheorem{lem}{Lemma}[thm]
\newtheorem{prop}[thm]{Proposition}
\newtheorem{cor}{Corollary}[thm]
\newtheorem{defn}[thm]{Definition}
\newtheorem{conj}[thm]{Conjecture}
\theoremstyle{remark}
\newtheorem{remark}[thm]{Remark}
\newtheorem{ex}[thm]{Example}
\newcommand{\Z}{\mathbb{Z}}
\newcommand{\Q}{\mathbb{Q}}
\newcommand{\R}{\mathbb{R}}
\newcommand{\Zp}{\mathbb{Z}_{(p)}}
\newcommand{\N}{\mathbb{N}}
\newcommand{\Fp}{\mathbb{F}_p}
\newcommand{\m}{\mathfrak{m}}
\newcommand{\n}{\mathfrak{n}}
\newcommand{\M}{\mathcal{M}}
\newcommand{\Span}{\text{Span}}
\newcommand{\Hom}{\text{H}}
\newcommand{\Ass}{\text{Ass}}
\newcommand{\qL}{\mathcal{L}}
\newcommand{\h}{\mathfrak{h}}
\newcommand{\hb}{\underline{\mathfrak{h}}}
\newcommand{\xs}{\underline{x}}
\newcommand{\ys}{\underline{y}}
\newcommand{\ts}{\underline{t}}
\newcommand{\as}{\underline{a}}
\newcommand{\bs}{\underline{b}}
\newcommand{\vs}{\underline{v}}
\newcommand{\ws}{\underline{w}}
\newcommand{\us}{\underline{u}}
\newcommand{\enaught}{\underline{e_0}}
\begin{document}

\title{Content and Q-sequences in Mixed Characteristic Local Rings}

\author{Olivia Strahan}
\address{Department of Mathematics, University of Michigan, East Hall, 530 Church Street, Ann Arbor, 48109, Michigan, United States}
\thanks{This research was partially supported by NSF grants DMS 1840234, DMS 2200501, and DMS~2101075.}
\keywords{quasilength, mixed characteristic
}
\begin{abstract}
    It is proved that a system of parameters is always a Q-sequence (in the sense of Hochster and Huneke) for several classes of mixed characteristic rings: rings in which the characteristic of the residue field is a nilpotent element, and mixed characteristic analogues of Stanley-Reisner rings, semigroup rings, and toric face rings.
\end{abstract}
\maketitle

\section{Introduction}
\label{sec:intro}

Fix a commutative ring $R$ with unity and a finitely generated ideal $I \subset R$. The notion of \emph{\boldmath $I$-quasilength} is defined for finitely generated modules which are annhiliated by a power of $I$; when the ideal $I$ is maximal, this coincides with the usual notion of length. The \emph{content} is a real number associated to a finitely generated module and a set of generators for $I$, and it is defined as a normalized limit of $I$-quasilengths (see Definition \ref{content}). When the module is $R$ itself, the content is between $0$ and $1$ inclusive, and in fact conjectured to be always equal to either $0$ or $1$. When the content of a ring with respect to a sequence is equal to 1, that sequence is referred to as a \emph{Q-sequence}. Mel Hochster and Craig Huneke introduced these notions in \cite{contentHH}, referring to the content as a heuristic measure of the local cohomology module $H^d_I(M)$.

Of particular interest is the case of the content of a local ring with respect to a system of parameters. Hochster and Huneke showed that the content of an equicharacteristic Noetherian local ring with respect to a system of parameters is always equal to $1$ \cite[Thm 4.7]{contentHH}, and speculated that the content with respect to a system of parameters is $1$ for any local ring; in other words, every system of parameters should be a Q-sequence. They were motivated by connections to the direct summand conjecture (now a theorem, due to Yves Andr\'e \cite{MR3814651}), and believed that the notion of content should be applicable to the question of existence of big Cohen-Macaulay algebras over local rings of either mixed or equal characteristic. This paper will expand Hochster and Huneke's Theorem 4.7 \cite{contentHH} to several classes of mixed characteristic rings. I begin by handling the case of local rings of residue characteristic $p$ where $p$ is a nilpotent element of the ring (Theorem \ref{pnil}). Next, I define mixed characteristic variants of several combinatorial classes of rings: Stanley-Reisner rings, semigroup rings, and toric face rings. The definitions arise by replacing monomials with “$p$-monomials" (see Definition \ref{pmonoms}). Surprisingly, these $p$-monomial constructions preserve many of the usual algebraic properties from the classical setting. I will leverage this to show that systems of parameters are Q-sequences in all three of these variant combinatorial rings. See Theorems \ref{sr}, \ref{bigsec5thm}, and \ref{ptoric}, respectively.

\section{Background}
\begin{defn}[Quasilength {\cite[p 3173]{contentHH}}]
    Let $I$ be a finitely generated ideal of a commutative ring $R$, and let $M$ be a finitely generated $R$-module which is annihilated by some power of $I$. The \emph{\boldmath $I$-quasilength of M}, denoted $\qL_I(M)$, is the minimum length $t$ of a filtration $0 = M_0 \subset M_1 \subset \hdots \subset M_t = M$ such that the factors $M_i / M_{i-1}$ are cyclic and annihilated by $I$.
\end{defn}

Before continuing, we will introduce some notation. Let $\xs = x_1, \hdots, x_n$ be generators for $I$, and let $\ts = (t_1, \hdots, t_d) \in \N^d$. Then define
\[
    I_{\ts} := (x_1^{t_1}, \hdots, x_d^{t_d})R
\]
to be the ideal of $R$ with generators $x_1^{t_1}, \hdots, x_d^{t_d}$.
The choice of generators for $I$ is not directly referenced in the notation, but rather inferred from context.

\begin{defn}[Content {\cite[p 3178]{contentHH}}]
\label{content}
     Let $I = (x_1, \hdots, x_d) \subset R$ be an ideal, and let $M$ be a finitely generated $R$-module. The \emph{content} is defined as
    \[
        \h_{\xs}^d(M) := \lim_{s \to \infty} \inf \left\{  \frac{\qL_I(M/I_{\ts}M)}{t_1 \hdots t_d}  : \ts \in \N^d \; t_i \geq s \; \forall i \right\}
    \]
\end{defn}

The content of a module $M$ is, a priori, a nonnegative real number bounded above by the minimum number of generators for $M$ as an $R$-module \cite[Prop 2.1]{contentHH}. In particular, for any sequence of elements $x_1, \hdots, x_d \in R$, one has that $0 \leq \h_{\xs}^d(R) \leq 1$, even if the ring $R$ is non-Noetherian. However, we have no examples of a case where the inequalities are both strict. This leads to the following conjecture:

\begin{conj}[Dichotomy]
    \label{dichotomy}
    Let $R$ be any commutative ring, and let $x_1, \hdots, x_d$ be any finite set of elements of $R$. Then the content $\h_{\xs}^d(R)$ is equal to either $0$ or $1$.
\end{conj}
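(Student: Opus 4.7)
The plan is to tackle the contrapositive: assume $\h_{\xs}^d(R) > 0$ and try to conclude $\h_{\xs}^d(R) = 1$. First I would perform a sequence of reductions to a Noetherian local setting. Since each $R/I_{\ts}R$ is killed by a power of $I$, its support lies in $V(I)$; if $I$ is contained in a single maximal ideal $\m$, localization at $\m$ should preserve the relevant quasilengths, and if $I = R$ one checks directly that the content is zero. Nilpotent elements of $\xs$ may be quotiented out after verifying this does not decrease the content. For the Noetherian reduction, because each $R/I_{\ts}R$ is finitely presented, the content should depend only on a Noetherian subquotient of $R$; making this rigorous via colimits and compactness arguments seems tractable but requires care, since the conjecture is stated for an arbitrary commutative ring.

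Once reduced to the Noetherian local case $(R,\m)$ with $\xs \subseteq \m$, I would compare $d$ with $\dim R$. When $d > \dim R$, some nontrivial relation must hold among the $x_i$ modulo lower-dimensional ideals, and a subadditivity bound on quasilength across short exact sequences should force content zero. When $d = \dim R$ and $\xs$ is a system of parameters, we are in the setting of Hochster and Huneke's equicharacteristic Theorem 4.7 and in the several mixed characteristic cases handled by the present paper. When $d < \dim R$, so that $V(I)$ is positive-dimensional, I would attempt a dimension induction using short exact sequences relating the quotients $R/I_{\ts}R$ for varying $\ts$, ultimately reducing matters back to the $d = \dim R$ case.

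The principal obstacle, and the reason this conjecture remains open, lies in completing the argument for Noetherian mixed characteristic local rings where $\xs$ is a system of parameters but neither $p$-nilpotence nor a combinatorial monomial structure is available. In that regime the quasilengths are controlled by derived invariants of a ring over which we have little leverage, and I expect any complete proof to require genuinely new input: either (a) a big Cohen-Macaulay algebra, produced using Andr\'e's methods, in which $\xs$ becomes regular, together with a descent argument transferring quasilength bounds back to $R$, or (b) a direct structural argument on the sequence $\qL_I(R/I_{\ts}R)/(t_1 \cdots t_d)$ ruling out nontrivial accumulation values by exploiting the specific combinatorics of quasilength filtrations. Both approaches seem to go well beyond the techniques developed in the body of this paper, which is why the results below are restricted to special classes of mixed characteristic rings rather than asserting the dichotomy in full generality.
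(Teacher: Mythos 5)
You have correctly recognized that the statement in question is an open conjecture, not a theorem: the paper itself labels it Conjecture~\ref{dichotomy}, attributes the prime characteristic case to Hochster and Huneke (their Theorem~3.9), and never claims a proof in general. Your write-up is therefore honest about its own status, and you correctly locate the fundamental obstruction in the Noetherian mixed characteristic local case with a system of parameters, which is exactly the gap the paper narrows (via $p$-nilpotence and the $p$-monomial constructions) without closing.

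That said, since you framed this as a proof proposal, it is worth being precise about which of your reduction steps are actually sound and which contain unexamined gaps beyond the one you flag. The reduction to the local case when $I$ lies in a unique maximal ideal $\m$ is fine: since $I_{\ts}$ and $I$ have the same radical, $V(I_{\ts}) = V(I)$, so $R/I_{\ts}R$ is already local and localization at $\m$ changes nothing. But the Noetherian reduction is not a matter of ``care with colimits''; it is genuinely open whether the content of an arbitrary commutative ring is controlled by Noetherian subquotients, and indeed Hochster and Huneke are careful to allow non-Noetherian rings precisely because the applications (e.g., to $R^+$) require it. Your dimension induction for $d < \dim R$ is also only a hope: the short exact sequences you invoke relate quasilengths by subadditivity (\cite[1.1(d)]{contentHH}), which gives upper bounds, but forcing the normalized limit to stabilize at $0$ or $1$ across varying $\ts$ requires an argument you have not supplied. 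And for $d > \dim R$ you would need to show that a sequence longer than the dimension necessarily has content zero, which again is plausible but unproven in this generality. So even granting the key open case, your program has multiple gaps. None of this is a criticism of your judgment --- you correctly identified the statement as a conjecture and explained why it resists proof --- but the reductions you present as ``tractable'' should themselves be treated as open until someone writes them down.
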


Hochster and Huneke prove this to be true in prime characteristic rings \cite[Thm 3.9]{contentHH}. Loosely speaking, the content being $1$ should be thought of as a non-degeneracy condition on the sequence $\xs$. Throughout this paper I will use the following terminology introduced by Hochster and Huneke:

\begin{defn}[Q-sequence {\cite[page 3184]{contentHH}}]
    Let $R$ be any commutative ring. A finite set of elements $x_1, \hdots, x_d$ is called a \emph{Q-sequence} if $\h_{\xs}^d(R) = 1$.
\end{defn}

\begin{remark}
    \label{hbcontent2}
    Hochster and Huneke in fact define two distinct notions of content. The other variant, denoted $\hb_{\xs}^d(M)$, is somewhat more complicated to define (refer to \cite[p 3177]{contentHH}). Fortunately, the definition of a Q\babelhyphen{nobreak}sequence does not depend on which version of content is used! While it is not known whether or not $\hb_{\xs}^d(R)$ is always equal to $\h_{\xs}^d(R)$, it does turn out to be the case that $\hb_{\xs}^d(R) = 1$ if and only if $\h_{\xs}^d(R) = 1$ \cite[Thm 3.8]{contentHH}. The main results of this paper are about Q-sequences; therefore I need only use the definition of content that is easier to state.
\end{remark}

\begin{thm}[{\cite[Thm 4.7]{contentHH}}]
    \label{contentHH4.7}
    Let $R$ be an equicharacteristic Noetherian local ring. Then every system of parameters for $R$ is a Q-sequence.
\end{thm}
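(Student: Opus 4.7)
The plan is to split by residue-field characteristic and deduce $\h_\xs^d(R) = 1$ from a common universal lower bound on content.

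First, I would establish a coarse lower bound valid in any Noetherian local $(R,\m)$ with system of parameters $\xs$. Since $I = (\xs)$ is $\m$-primary, $L := \text{length}(R/I)$ is finite, and every cyclic $R$-module annihilated by $I$ is a quotient of $R/I$, hence has length at most $L$. Therefore, for any finite-length module $M$ annihilated by a power of $I$, any filtration witnessing $\qL_I(M)$ must have at least $\text{length}(M)/L$ steps, giving $\qL_I(M) \geq \text{length}(M)/L$. Specializing to $M = R/I_\ts$ and invoking Lech's theorem — which asserts $\text{length}(R/I_\ts)/(t_1\cdots t_d) \to e(\xs;R)$ as $\min t_i \to \infty$ — one obtains
\[
    \h_\xs^d(R) \;\geq\; \frac{e(\xs;R)}{L} \;>\; 0.
\]

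In characteristic $p>0$, the proof finishes immediately: by \cite[Thm 3.9]{contentHH}, the dichotomy conjecture holds in prime characteristic, so $\h_\xs^d(R) \in \{0,1\}$, and the strictly positive lower bound forces $\h_\xs^d(R) = 1$.

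In characteristic zero, dichotomy is unavailable and the argument must reduce to characteristic $p$ by descent. After completing $R$ and applying Cohen's structure theorem, present $R$ as a quotient of a power series ring over a characteristic-zero field, spread the defining data and $\xs$ over a finitely generated $\Z$-subalgebra $A$, and examine the fibers $A/pA$ for varying primes $p$. For all but finitely many $p$, the images of $\xs$ form a system of parameters at a suitable maximal ideal of $A/pA$, and the characteristic $p$ case yields content $1$ there. The main obstacle is transferring this conclusion back to $R$: an elementary filtration-pullback argument only yields the contravariant inequality $\qL_{IR}(R/I_\ts R) \leq \qL_{IA}(A/I_\ts A)$, which bounds content in the wrong direction. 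Overcoming this requires a finer descent — for instance, using faithful flatness of completion to track the lim-inf rigorously through mod-$p$ reduction — or a conceptually different route, such as exploiting Hochster's big Cohen-Macaulay modules in equicharacteristic zero: on such a module $B$ the sequence $\xs$ is regular, so $B/I_\ts B$ admits a Koszul filtration into $t_1\cdots t_d$ copies of $B/IB$, and one could attempt to descend this estimate to $R$ via a module-theoretic splitting. This characteristic zero step is where I would expect the bulk of the technical work to lie.
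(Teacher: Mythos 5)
The paper cites this result without proof; it is \cite[Thm 4.7]{contentHH}. So there is no ``paper's own proof'' to compare against, and the review concerns whether your proposal would actually close the gap.

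Your first two steps are correct. The bound $\qL_I(M)\geq\operatorname{length}(M)/L$ with $L=\operatorname{length}(R/I)$ is valid, Lech's limit theorem gives $\h_{\xs}^d(R)\geq e(\xs;R)/L>0$, and in characteristic $p$ the dichotomy theorem \cite[Thm~3.9]{contentHH} then forces $\h_{\xs}^d(R)=1$. That is a clean route to the prime characteristic case (though not the one Hochster and Huneke use for Theorem~4.7 itself).

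The characteristic zero step, however, contains the gap, and it is more structural than ``technical work to be filled in.'' Reduction to characteristic $p$ by spreading out cannot produce a lower bound on $\h_{\xs}^d(R)$, because content is monotone the wrong way for this: \cite[Prop~2.5]{contentHH} (Proposition~\ref{contentHH2.5} in this paper) gives $\h_{\xs}^d(R)\geq\h_{\ys}^d(S)$ only when $S$ is an $R$-\emph{algebra}. A fiber $A/pA$ of a finitely generated $\Z$-model $A$ of $R$ maps \emph{to} $R$ (after localization and completion), not \emph{from} it, so knowing $\h_{\ys}^d(A/pA)=1$ bounds the content of $A$, not of $R$. This is not a defect in your filtration-pullback attempt; it is a one-way obstruction that no refinement of spreading out can cross. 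The alternative you sketch --- big Cohen--Macaulay modules --- is in fact the correct idea and is essentially what Hochster and Huneke do, but it needs no ``module-theoretic splitting.'' The mechanism is: if $B$ is a balanced big Cohen--Macaulay $R$-algebra, then Proposition~\ref{contentHH2.5} gives $\h_{\xs}^d(R)\geq\h_{\ys}^d(B)$, and one shows directly that $\h_{\ys}^d(B)=1$ because $\ys$ is a regular sequence on $B$ and $B/I_{\ts}B$ is cyclic over $B$. The delicate point, which your proposal elides, is that $B$ is not a Noetherian module, so the version of \cite[Prop~1.2(c)]{contentHH} quoted in Proposition~\ref{reg} does not literally apply; one needs the sharper statement (also in \cite{contentHH}) that the quasilength lower bound $\qL_I(B/I_{\ts}B)\geq t_1\cdots t_d$ holds whenever $\xs$ is regular on $B$ with $B/IB\neq 0$, no Noetherian hypothesis required. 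Once that is in hand, big CM algebras exist in equicharacteristic of either characteristic, and the argument handles characteristic $0$ and $p$ uniformly --- making your separate dichotomy-based treatment of characteristic $p$ unnecessary for this theorem, though still correct.
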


\begin{cor}[{\cite[Cor 4.9]{contentHH}}]
    \label{contentHH4.9}
    Let $R$ be any commutative ring, and let $x_1, \hdots, x_d$ be a finite set of elements of $R$. If there exists a Noetherian $R$-algebra $S$ such that $(\xs)S$ has height $d$, then the content $\h_{\xs}^d(R)$ is positive. If $R$ contains a field then in fact $\h_{\xs}^d(R) = 1$.
\end{cor}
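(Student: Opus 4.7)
The plan is to reduce to the case of a Noetherian local ring with a system of parameters via a monotonicity property of the content, and then apply Lech's multiplicity formula (for positivity) or Theorem~\ref{contentHH4.7} (for the equicharacteristic statement). The key preliminary is a base-change inequality: for any ring homomorphism $R \to T$, one has $\h_{\xs}^d(T) \leq \h_{\xs}^d(R)$, where the left side is taken with respect to the image of $\xs$ in $T$. To see this, I would observe that an $I$-quasi-filtration $0 = M_0 \subset \cdots \subset M_s = R/I_{\ts}R$ with $R$-cyclic factors killed by $I$ yields, after tensoring with $T$ and passing to images, a chain of $T$-submodules of $T/I_{\ts}T$ of length at most $s$ whose successive quotients are $T$-cyclic (each being a quotient of some $(M_i/M_{i-1}) \otimes_R T \cong T/\mathfrak{a}_i T$) and are killed by $IT$. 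Dividing by $t_1 \cdots t_d$ and taking $\liminf$ in $\ts$ yields the stated inequality.

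Since $(\xs)S$ has height $d$, there exists a prime $\mathfrak{p}$ of $S$ minimal over $(\xs)S$ with $\operatorname{ht}(\mathfrak{p}) = d$. Applying the base-change inequality to the composition $R \to S \to S_{\mathfrak{p}}$, it suffices to bound $\h_{\xs}^d(S_{\mathfrak{p}})$ from below, where $(S_{\mathfrak{p}}, \mathfrak{p} S_{\mathfrak{p}})$ is Noetherian local and $\xs$ is a system of parameters. Setting $T = S_{\mathfrak{p}}$, any $T$-cyclic module killed by $I = (\xs)T$ is a quotient of $T/I$ and so has length at most $\ell(T/I)$; consequently any $I$-quasi-filtration of $T/I_{\ts}T$ of length $s$ satisfies $\ell(T/I_{\ts}T) \leq s \cdot \ell(T/I)$, giving
\[
    \qL_I(T/I_{\ts}T) \;\geq\; \frac{\ell(T/I_{\ts}T)}{\ell(T/I)}.
\]
Dividing by $t_1 \cdots t_d$ and invoking Lech's formula $\lim_{\min t_i \to \infty} \ell(T/I_{\ts}T)/(t_1 \cdots t_d) = e(\xs; T)$, which is positive since $\xs$ is a system of parameters, produces $\h_{\xs}^d(T) \geq e(\xs; T)/\ell(T/I) > 0$, establishing the first assertion.

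For the second assertion, if $R$ contains a field $K$ then $K \subseteq S_{\mathfrak{p}}$, so $(S_{\mathfrak{p}}, \mathfrak{p} S_{\mathfrak{p}})$ is an equicharacteristic Noetherian local ring. Theorem~\ref{contentHH4.7} then gives $\h_{\xs}^d(S_{\mathfrak{p}}) = 1$, and combined with the base-change inequality and the general upper bound $\h_{\xs}^d(R) \leq 1$ we conclude $\h_{\xs}^d(R) = 1$. The main technical point requiring care is the base-change inequality: since tensor product is not exact, the tensored filtration must be replaced by the images of the maps $M_i \otimes_R T \to (R/I_{\ts}R) \otimes_R T$, and one has to verify that these images remain $T$-cyclic with the correct annihilators.
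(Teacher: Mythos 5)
Your proof is correct, and since the paper itself only cites this as Corollary 4.9 of Hochster--Huneke without reproducing the argument, there is no in-paper proof to match against; your route is the natural one and is consistent with how the cited source proceeds. Two small remarks. First, your base-change inequality is already available in the paper as Proposition~\ref{contentHH2.5}, so you need not rederive it (though your derivation, passing to images of the tensored filtration so as to sidestep non-exactness, is the standard correct argument). Second, the heart of the positivity claim is the lower bound
\[
\qL_I(T/I_{\ts}T) \;\geq\; \frac{\ell(T/I_{\ts}T)}{\ell(T/I)}
\]
for $T = S_{\mathfrak{p}}$ Noetherian local with $\xs$ a system of parameters, which uses that every cyclic $T$-module killed by $I$ has length at most $\ell(T/I) < \infty$ together with additivity of length along the filtration; combining this with Lech's limit formula $\lim \ell(T/I_{\ts}T)/(t_1\cdots t_d) = e(\xs;T) > 0$ gives $\h^d_{\xs}(T) \geq e(\xs;T)/\ell(T/I) > 0$. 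Both steps are valid, the hypotheses of Lech's formula (system of parameters in a Noetherian local ring, with the $\min t_i \to \infty$ regime matching the $\liminf$ in Definition~\ref{content}) are satisfied, and the equicharacteristic case then follows from Theorem~\ref{contentHH4.7} applied to $S_{\mathfrak{p}}$ together with the general bound $\h^d_{\xs}(R) \leq 1$.
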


\begin{remark}
    The hypotheses of Corollary \ref{contentHH4.9} are equivalent to the condition that $\xs$ maps to a system of parameters in some local Noetherian $R$-algebra (which is equicharacteristic when $R$ contains a field). If $R$ itself is Noetherian, then in particular these conditions are satisfied if the ideal $(\xs)R$ has any minimal prime of height $d$.
\end{remark}

\begin{conj}
\label{sopcont}
Let $R$ be a (Noetherian) local ring of dimension $d$. Then every system of parameters for $R$ is a Q-sequence.

More generally, if $R$ is a (Noetherian) ring, and $\xs = x_1, \hdots, x_d$ is a sequence such that $(\xs)R$ has a minimal prime of height $d$, then $\xs$ is a Q-sequence.
\end{conj}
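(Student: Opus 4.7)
The plan is to reduce the second, more general statement to the first via localization. If $(\xs)R$ has a minimal prime $\mathfrak{p}$ of height $d$, then in $R_{\mathfrak{p}}$ the sequence $\xs$ generates an ideal primary to $\mathfrak{p} R_{\mathfrak{p}}$, so $\xs$ is a system of parameters for the local ring $R_{\mathfrak{p}}$. One would then argue that being a Q-sequence descends from $R_{\mathfrak{p}}$ back to $R$: a filtration witnessing small quasilength in $R_{\mathfrak{p}}/I_{\ts} R_{\mathfrak{p}}$ should lift to one in $R/I_{\ts} R$ by clearing denominators, possibly losing a multiplicative factor that becomes negligible in the normalized limit defining content. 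Modulo this descent, the core problem is the local statement.

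For a local ring $(R, \m)$ of dimension $d$ with system of parameters $\xs$, I would split by residue characteristic. Theorem \ref{contentHH4.7} handles the equicharacteristic case, so only mixed characteristic remains, where $p := \text{char}(R/\m)$ lies in $\m$. My strategy mirrors the usual progression in mixed characteristic commutative algebra: try to reduce to equicharacteristic statements via the $p$-adic filtration. Concretely, I would try to bound $\qL_I(R/I_{\ts} R)$ in terms of quasilengths of the successive layers $p^i R / p^{i+1} R$, each a module over the equicharacteristic-$p$ ring $R/pR$, where Hochster and Huneke's prime characteristic results apply. When $p$ is nilpotent, say $p^N = 0$, this filtration is finite and the strategy should go through cleanly, giving Theorem \ref{pnil}. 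For the combinatorial classes defined via $p$-monomials --- the mixed characteristic analogues of Stanley-Reisner, semigroup, and toric face rings --- I would take a more hands-on approach: exploit the explicit combinatorial structure to construct near-optimal filtrations directly, mirroring the arguments used in the equicharacteristic analogues, and exploit the fact that the $p$-monomial constructions preserve many algebraic invariants from the classical setting.

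The main obstacle is the fully general mixed characteristic local case where $p$ is not nilpotent. Here the $p$-adic filtration is infinite, and it is unclear how to assemble quasilength estimates at each stage into a single bound on $\qL_I(R/I_{\ts} R)$ that is tight enough to force the normalized limit to equal $1$. Completion in the $p$-adic or $\m$-adic topology might bring the problem closer to an equicharacteristic situation on associated graded rings, but quasilengths are not obviously well-behaved under completion, so any such reduction requires care. A genuinely new tool --- perhaps Andr\'e's perfectoid techniques applied to a perfectoid $R$-algebra, or a direct comparison between $R$ and a big Cohen-Macaulay $R$-algebra where equicharacteristic arguments can be performed --- appears necessary to settle the conjecture in full generality; pending that, I expect the paper to content itself with the nilpotent case and the combinatorial classes.
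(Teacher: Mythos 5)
This statement is a conjecture, not a theorem, and the paper does not claim to prove it; you are right that the paper only establishes it for nilpotent residue characteristic and the $p$-monomial combinatorial classes. Given that, what remains to assess is whether your strategies match the paper's on those special cases, and they mostly do not.

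For the reduction from the global statement to the local one, the mechanism is not ``clearing denominators'' in a filtration (which would require controlling the multiplicative error); it is the monotonicity of content under arbitrary ring maps (Hochster--Huneke's Proposition 2.5, here Proposition \ref{alg}): if $\xs$ maps to a Q-sequence in $R_{\mathfrak p}$, then $\h_{\xs}^d(R)\geq\h_{\xs}^d(R_{\mathfrak p})=1$, and since content is always $\leq 1$ we are done. This is sharper and avoids any quantitative bookkeeping. For the nilpotent case, your $p$-adic filtration idea runs into a directional problem: quasilength is subadditive along a filtration, $\qL_I(M)\leq\sum_i\qL_I(M_i/M_{i-1})$, which gives upper bounds, whereas showing content equals $1$ requires a lower bound on $\qL_I(R/I_{\ts})$. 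There is no need for the whole filtration: since $pR$ is a nilideal, $\dim(R/pR)=d$ and $\xs$ remains a system of parameters there, so a single application of Hochster--Huneke's equicharacteristic theorem to $R/pR$ together with the monotonicity of content already yields the result. The paper's actual argument (Theorem \ref{pnil}) does essentially this, reducing instead modulo a minimal prime $P$ (which necessarily contains $p$) so that $R/P$ is an equicharacteristic $p$ domain. Finally, for the combinatorial classes, the paper does not ``construct near-optimal filtrations directly.'' Instead it exhibits, in each case, a Noetherian Cohen--Macaulay module over the ring (the quotient by a minimal prime for $p$-Stanley--Reisner rings, the normalization for $p$-semigroup rings, and an algebra-retract reduction to $p$-semigroup rings for $p$-toric face rings) and then invokes the regular-sequence criterion of Proposition \ref{reg}. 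This is a structurally cleaner route than hand-building filtrations, and the work is in showing the $p$-monomial constructions preserve the relevant Noetherian/CM structure, not in quasilength combinatorics.

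Your assessment of the main obstacle is accurate: when $p$ is not nilpotent, none of these reductions apply, quasilength behaves poorly under completion, and the conjecture remains open. The paper makes no attempt at the general case.
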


The hope is that the equicharacteristic condition in Theorem \ref{contentHH4.7} can in fact be dropped. Perhaps even the Noetherian condition can be dropped, but that aspect of the problem is not examined in this paper. One good reason to be hopeful is that, by the positivity result in Corollary \ref{contentHH4.9}, the Noetherian case of Conjecture \ref{sopcont} would follow from Conjecture \ref{dichotomy} (Dichotomy). The goal of this paper is to affirm Conjecture \ref{sopcont} for several classes of mixed characteristic rings. The following propositions will be used frequently:

\begin{prop}
    \label{alg}
    Let $R \to S$ be a map of commutative rings, and let $x_1, \hdots, x_d$ be elements of $R$ with images $y_1, \hdots, y_d$ in $S$. If $\ys$ is a Q-sequence in $S$, then $\xs$ is a Q-sequence in $R$.
\end{prop}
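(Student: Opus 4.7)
The plan is to establish the general inequality $\h_{\ys}^d(S) \leq \h_{\xs}^d(R)$. Combined with the a priori bound $\h_{\xs}^d(R) \leq 1$ from \cite[Prop 2.1]{contentHH} (since $R$ is cyclic over itself), the assumption $\h_{\ys}^d(S) = 1$ then forces $\h_{\xs}^d(R) = 1$.

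The key step is a quasilength comparison at each multi-exponent $\ts \in \N^d$. Write $I = (\xs)R$ and $J = (\ys)S$, and observe that $J_{\ts}$ is the image of $I_{\ts}$ in $S$, so that $S/J_{\ts}S \cong (R/I_{\ts}R) \otimes_R S$. I claim $\qL_J(S/J_{\ts}S) \leq \qL_I(R/I_{\ts}R)$. Given a minimal filtration $0 = M_0 \subset M_1 \subset \cdots \subset M_t = R/I_{\ts}R$ by $R$-submodules with cyclic factors annihilated by $I$, let $N_i$ be the image of $M_i \otimes_R S$ inside $S/J_{\ts}S$. The chain $0 = N_0 \subseteq N_1 \subseteq \cdots \subseteq N_t = S/J_{\ts}S$ consists of $S$-submodules, and each factor $N_i/N_{i-1}$ is a surjective image of $(M_i/M_{i-1}) \otimes_R S$, so it is cyclic as an $S$-module and annihilated by the image of $I$ in $S$, which is $J$. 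Deleting any repeated terms produces a valid filtration of $S/J_{\ts}S$ of length at most $t$, yielding the claimed inequality.

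Dividing by $t_1 \cdots t_d$ and passing to the limit inferior over $\ts$ with all $t_i \geq s$, as $s \to \infty$, converts the term-by-term inequality into $\h_{\ys}^d(S) \leq \h_{\xs}^d(R)$, which finishes the proof. No substantive obstacle is anticipated: this is a direct base-change argument, and the only mild wrinkle is that tensor product can collapse some strict inclusions of the original filtration into equalities, which is handled trivially by removing duplicated terms in the resulting chain.
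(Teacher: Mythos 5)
Your proof is correct and takes essentially the same route as the paper: both reduce the statement to the base-change inequality $\h_{\ys}^d(S) \leq \h_{\xs}^d(R)$ together with the a priori bound $\h_{\xs}^d(R) \leq 1$. The only difference is that the paper cites this inequality directly as Proposition 2.5 of Hochster--Huneke, whereas you reprove it from first principles by tensoring a minimal filtration along $R \to S$ and discarding collapsed steps.
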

\begin{proof}
    This follows directly from Proposition 2.5 \cite{contentHH}, stated below:
    \begin{prop}[{\cite[Prop 2.5]{contentHH}}]
       \label{contentHH2.5}
       If $R \to S$ is a map of commutative rings, $M$ is a finitely generated $R$-module, and $x_1, \hdots, x_d \in R$ with images $y_1, \hdots, y_d \in S$, then $\h_{\xs}^d(M) \geq \h_{\ys}^d(S \otimes M)$. In particular, $\h_{\xs}^d(R) \geq \h_{\ys}^d(S)$.
    \end{prop}
\end{proof}

\begin{remark}
    It follows from Proposition \ref{alg} that the non-local statement of Conjecture \ref{sopcont} is equivalent to the local version.
\end{remark}

\begin{prop}
    \label{minprimeQseq} Let $R$ be a commutative ring, and suppose that $R/P$ satisfies Conjecture \ref{sopcont} for every minimal prime $P$ of $R$; in other words, any length $d$ sequence in $R/P$ generating an ideal of height $d$ is a $Q$-sequence in $R/P$. Then $R$ also satisfies Conjecture \ref{sopcont}.
\end{prop}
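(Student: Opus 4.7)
The plan is to reduce directly to a single quotient $R/P$ for a well-chosen minimal prime $P$, and then invoke Proposition \ref{alg} applied to the surjection $R \twoheadrightarrow R/P$. Concretely, suppose $\xs = x_1, \ldots, x_d \in R$ satisfies $\operatorname{ht}((\xs)R) = d$; my goal is to produce a minimal prime $P$ of $R$ such that the images of $\xs$ in $R/P$ generate an ideal of height $d$.

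To find such a $P$, I first pick a minimal prime $Q$ of $(\xs)R$ with $\operatorname{ht}(Q) = d$, which exists by the height hypothesis. Choosing a saturated chain of primes $P_0 \subsetneq P_1 \subsetneq \cdots \subsetneq P_d = Q$, the bottom prime $P_0$ must be minimal in $R$; set $P := P_0$. The chain $0 = P/P \subsetneq P_1/P \subsetneq \cdots \subsetneq Q/P$ in $R/P$ then witnesses $\operatorname{ht}(Q/P) \geq d$ in $R/P$. Moreover $Q/P$ is minimal over $(\xs)(R/P)$: any prime between $(\xs)(R/P)$ and $Q/P$ lifts to a prime between $(\xs)R$ and $Q$, and minimality of $Q$ forces equality. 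Since $(\xs)(R/P)$ is generated by $d$ elements, Krull's height theorem gives the reverse bound, so $\operatorname{ht}((\xs)(R/P)) = d$.

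Now by hypothesis $R/P$ satisfies Conjecture \ref{sopcont}, so the images $\overline{x_1}, \ldots, \overline{x_d}$ form a Q-sequence in $R/P$. Applying Proposition \ref{alg} to the canonical surjection $R \to R/P$, with the images of $\xs$ being exactly $\overline{\xs}$, we conclude that $\xs$ is itself a Q-sequence in $R$. This verifies Conjecture \ref{sopcont} for $R$.

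The only nontrivial step is the height bookkeeping in the second paragraph, and even that is routine commutative algebra once one has decided to descend along a saturated chain from the height-$d$ minimal prime of $(\xs)R$ to a minimal prime of $R$. Nothing about quasilengths or content is needed beyond the functoriality already packaged in Proposition \ref{alg}.
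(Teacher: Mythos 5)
Your proof is correct and follows essentially the same route as the paper: choose a height-$d$ minimal prime $Q$ of $(\xs)R$, descend to a minimal prime $P$ of $R$ below it, observe that $Q/P$ is then a height-$d$ minimal prime of $(\xs)(R/P)$, and pull the Q-sequence property back along $R \twoheadrightarrow R/P$ via Proposition \ref{alg}. The one small difference is that you invoke Krull's height theorem for the bound $\operatorname{ht}(Q/P) \leq d$, which is unnecessary (and implicitly requires Noetherianity): any chain in $R/P$ under $Q/P$ lifts to a chain in $R$ under $Q$, so $\operatorname{ht}(Q/P) \leq \operatorname{ht}(Q) = d$ directly.
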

\begin{proof}    
    Let $\xs = x_1, \hdots, x_d$ be elements of $R$ such that $(\xs)R$ has a minimal prime $Q$ of height $d$. I will show that $\xs$ is a Q-sequence in $R$.
    
    There is a minimal prime $P \subset Q$ of $R$ such that $Q/P$ is a height $d$ prime of $R/P$. Let $\pi$ be the quotient map $R \to R/P$. Then $Q/P$ is a height $d$ minimal prime of $(\pi(\xs))R/P$. By assumption, it follows that $\pi(\xs)$ is a Q-sequence on $R/P$, and thus $\xs$ is a Q-sequence on $R$ by Proposition \ref{alg}.
\end{proof}

\begin{prop}
    \label{reg}
    Let $R$ be a commutative ring with a sequence of elements $\xs = x_1, \hdots, x_d$. If $\xs$ is a regular sequence on some Noetherian $R$-module, then $\xs$ is a Q-sequence. It follows that if $R$ has a Noetherian Cohen-Macaulay module, any length $d$ sequence generating an ideal of height $d$ is a Q-sequence.
\end{prop}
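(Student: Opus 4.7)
The plan is to reduce, via Proposition~\ref{alg}, to a Noetherian local Cohen-Macaulay setting and then combine a regular-sequence lower bound $\h_\xs^d(M) \geq \mu_R(M)$ with the free-presentation upper bound $\h_\xs^d(M) \leq \mu_R(M) \cdot \h_\xs^d(R)$ to force $\h_\xs^d(R) = 1$. First, pass to $S := R/\Ann_R(M)$; because $M$ is a faithful finitely generated $S$-module, the map $s \mapsto (sm_1, \ldots, sm_n)$ (for generators $m_i$ of $M$) embeds $S$ as an $S$-submodule of $M^n$, forcing $S$ to be Noetherian. By Proposition~\ref{alg} it suffices to show the image of $\xs$ in $S$ is a Q-sequence. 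Since $\xs$ is regular of length $d$ on $M$, the grade-height inequality gives $\mathrm{ht}((\xs)S) \geq d$, so there is a minimal prime $P$ of $(\xs)S$ of height $d$; applying Proposition~\ref{alg} once more, we may replace $S$ by $S_P$ and assume $(R, \m)$ is Noetherian local of dimension $d$, $\xs$ is a system of parameters, and $M$ is a faithful Cohen-Macaulay $R$-module of dimension $d$ on which $\xs$ is regular.

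The upper bound $\h_\xs^d(M) \leq \mu_R(M) \cdot \h_\xs^d(R)$ follows from a surjection $R^{\mu_R(M)} \twoheadrightarrow M$: quasilength is nonincreasing under surjections and additive on direct sums, so $\qL_I(M/I_\ts M) \leq \mu_R(M) \cdot \qL_I(R/I_\ts)$, and dividing by $t_1 \cdots t_d$ and taking $\liminf$ gives the bound. For the lower bound, one uses the standard length formula $\mathrm{length}(M/I_\ts M) = t_1 \cdots t_d \cdot \mathrm{length}(M/IM)$, valid for regular sequences on Cohen-Macaulay modules, together with a careful analysis of how cyclic $R/I$-subquotients can fit into a quasilength filtration of $M/I_\ts M$, to conclude $\qL_I(M/I_\ts M) \geq \mu_R(M) \cdot t_1 \cdots t_d$. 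Combining the two bounds yields $\mu_R(M) \leq \mu_R(M) \cdot \h_\xs^d(R)$, so $\h_\xs^d(R) \geq 1$; the general inequality $\h_\xs^d(R) \leq 1$ then gives equality, proving $\xs$ is a Q-sequence. For the second statement, Cohen-Macaulayness of $M$ ensures that any length-$d$ sequence generating a height-$d$ ideal becomes regular on $M$ after passing to $R/\Ann_R(M)$, where grade equals height on the support, and the first part applies.

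The main obstacle is the sharp lower bound $\h_\xs^d(M) \geq \mu_R(M)$. The naive estimate $\qL_I(N) \geq \mathrm{length}_R(N)/\mathrm{length}_R(R/I)$ for any finite-length $I$-torsion module $N$, applied to $N = M/I_\ts M$, only yields $\h_\xs^d(M) \geq \mathrm{length}(M/IM)/\mathrm{length}(R/I)$, which is the generic rank of $M$ and can be strictly less than $\mu_R(M)$ for non-free Cohen-Macaulay $M$. Recovering the sharp bound requires exploiting the full $R/I$-module structure of $M/IM$ rather than just its length, and is where the regular-sequence hypothesis on $M$, strictly stronger than mere Cohen-Macaulayness, enters in an essential way.
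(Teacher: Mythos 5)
Your proof has a genuine gap, and you acknowledge it yourself in the final paragraph: the key lower bound $\h_\xs^d(M) \geq \mu_R(M)$ is not proved, only gestured at (``a careful analysis of how cyclic $R/I$-subquotients can fit into a quasilength filtration''). You correctly observe that the naive length estimate only gives a weaker bound proportional to the generic rank of $M$, and that closing the gap requires ``exploiting the full $R/I$-module structure of $M/IM$'' --- but no such argument is supplied, and this is exactly the hard step. The sandwich $\mu_R(M) \leq \h_\xs^d(M) \leq \mu_R(M)\cdot\h_\xs^d(R)$ collapses without the left-hand inequality, so the first assertion of the proposition is not established.

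Beyond the gap, the overall strategy is a large detour around a one-line citation. The paper's proof of the first statement is immediate from \cite[Prop~1.2(c)]{contentHH}: if $\xs$ is a regular sequence on \emph{some} Noetherian $R$-module, then $\qL_I(R/I_\ts) = t_1\cdots t_d$ exactly for every $\ts$ --- note that this is a statement about quasilengths of quotients of the \emph{ring} $R$, not of the module $M$ --- and $\h_\xs^d(R) = 1$ then drops directly out of Definition~\ref{content}. There is no need to analyze $\h_\xs^d(M)$ at all. Your reductions (passing to $R/\Ann_R(M)$, invoking the grade--height inequality, localizing at a height-$d$ prime) are more machinery than the first statement requires. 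For the second statement, the paper's argument is the short localization you sketch at the end: pick a height-$d$ minimal prime $P$ of $(\xs)R$, observe that $\xs$ becomes a system of parameters for $R_P$ and hence a regular sequence on the Cohen--Macaulay module $M_P$, and conclude $1 \geq \h_\xs^d(R) \geq \h_\xs^d(R_P) = 1$ via Proposition~\ref{contentHH2.5}. So the second half of your outline is essentially right, but the first half rests on an unproved claim that the available tools render unnecessary.
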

\begin{proof}
    If $\xs$ is a regular sequence on some Noetherian $R$-module, then by \cite[Prop 1.2(c)]{contentHH}, we have $\qL_I(R/I_{\ts}) = t_1 \hdots t_d$ for every $\ts$, and thus $\h_{\xs}^d(R) = 1$.

    Now suppose that $M$ is a Noetherian Cohen-Macaulay $R$-module and $x_1, \hdots, x_d$ generates an ideal of height $d$. Let $P \subset R$ be a height $d$ minimal prime of $(\xs)R$. The images of $\xs$ form a system of parameters for $R_P$, and $M_P$ is a Noetherian Cohen-Macaulay $R_P$-module. Then $\xs$ is a regular sequence on $M_P$, so $\h_{\xs}^d(R_P) = 1$ as above. We have $1 \geq \h_{\xs}^d(R) \geq \h_{\xs}^d(R_P) = 1$ by Proposition \ref{contentHH2.5}, so in fact $\h_{\xs}^d(R) = 1$.
\end{proof}

\begin{remark}
    Propositions \ref{alg} and \ref{reg} work together very well. To show that $\xs$ is a Q-sequence on $R$, one may look for a module $M$ such that $\xs$ are a regular sequence on $M$, where $M$ need only be Noetherian as a module over any $R$-algebra. This is very convenient because it allows us to localize with impunity. The ring $R_P$ is Noetherian when $R$ is Noetherian, but very rarely is it Noetherian as an $R$-module.
\end{remark}

\section{Nilpotent prime}
In this section, we prove Conjecture \ref{sopcont} for local rings whose residue characteristic is nilpotent. 

\begin{thm}
    \label{pnil}
    Let $(R, \m)$ be a Noetherian local ring of dimension $d$ and residue characteristic $p > 0$ such that $p$ is nilpotent in $R$. Then every system of parameters for $R$ is a Q-sequence.
\end{thm}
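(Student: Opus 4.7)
The plan is to reduce to the equicharacteristic case handled by Theorem \ref{contentHH4.7} via the quotient map $R \to R/pR$, and then apply Proposition \ref{alg}.

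First I would observe that since $p$ is nilpotent, the ideal $pR$ is contained in the nilradical of $R$, so $\mathrm{Spec}(R/pR) \to \mathrm{Spec}(R)$ is a homeomorphism and in particular $\dim(R/pR) = \dim(R) = d$. Let $\xs = x_1, \ldots, x_d$ be a system of parameters for $R$, and let $\bar{\xs}$ denote the images in $R/pR$. Then $(\bar{\xs})(R/pR)$ is $(\m/pR)$-primary in $R/pR$, and since $R/pR$ is a Noetherian local ring of dimension $d$, the sequence $\bar{\xs}$ is a system of parameters for $R/pR$.

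Next, $R/pR$ is a Noetherian local ring in which $p = 0$, so it has equal characteristic $p$. Therefore Theorem \ref{contentHH4.7} applies, and $\bar{\xs}$ is a Q\babelhyphen{nobreak}sequence in $R/pR$. Finally, Proposition \ref{alg} (applied to the surjection $R \to R/pR$) immediately yields that $\xs$ is a Q\babelhyphen{nobreak}sequence in $R$.

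There is essentially no obstacle here; the only substantive input is the dimension equality $\dim(R) = \dim(R/pR)$, which holds because $p$ is nilpotent, and after that the result is a direct application of the equicharacteristic theorem of Hochster--Huneke together with the functoriality of Q-sequences under ring maps. In fact this argument shows a mild generalization: whenever $(R,\m)$ is Noetherian local and some power of $p$ lies in an ideal $J \subseteq R$ with $\dim(R/J) = \dim(R)$ and $R/J$ equicharacteristic, every system of parameters is a Q-sequence.
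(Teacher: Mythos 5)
Your proof is correct, and it is actually more streamlined than the argument the paper gives. Where you pass directly to $R/pR$, the paper instead invokes Proposition \ref{minprimeQseq} to reduce to $R/P$ for each minimal prime $P$ of $R$, then observes that since $p \in P$ and $\Z/p^t\Z \subseteq R$ is a subring, the intersection $P \cap \Z/p^t\Z = (p)\Z/p^t\Z$, so that $\Fp$ injects into $R/P$ and $R/P$ is equicharacteristic. Both routes bottom out in Theorem \ref{contentHH4.7} plus the functoriality result Proposition \ref{alg}; the difference is which equicharacteristic quotient one lands in. Your version avoids the minimal-prime bookkeeping entirely, at the modest price of having to observe that the images $\bar{\xs}$ remain a system of parameters for $R/pR$ (which follows because $pR$ is in the nilradical, so $\dim(R/pR) = d$ and $\sqrt{(\xs) + pR} = \m$). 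The paper's version is arguably chosen for consistency with the rest of the paper, where the minimal-prime reduction via Proposition \ref{minprimeQseq} is the workhorse technique for the combinatorial results; but for this theorem in isolation yours is the shorter path. Your closing generalization is also correct and genuinely strictly more general: taking $J$ to be a minimal prime containing $p$ with $\dim(R/J) = \dim(R)$ covers cases where $p$ is a zerodivisor but not nilpotent (e.g.\ $R = \Z_p[[x,y]]/(py)$ with $J = (p)$), which the stated theorem does not address.
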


\begin{remark}
    \label{Zpalg}
    When the residue characteristic of a local ring is $0$, then the ring contains $\Q$ and is therefore equal characteristic. Otherwise, if $(R, \m)$ is a local ring with residue characteristic $p$, then there is a natural map of local rings $\Zp \to R$, and the kernel of this map is either $0$ or $p^t \; \Zp$ for some $t\in \N$. Theorem \ref{pnil} covers precisely the case when this map is \textit{not} injective.
\end{remark}

\begin{proof}[Proof of Theorem \ref{pnil}]
    By Proposition \ref{minprimeQseq}, it is enough to check that systems of parameters are Q-sequences in $R/P$ for every minimal prime $P$. So, let $P$ be any minimal prime of $R$. Since $p \in R$ is nilpotent, one has that $p\in P$ and  $\Z/p^t\Z \subseteq R$ is a subring for some $t > 0$. Moreover, every element of $\Z/p^t\Z$ is either a unit (so not in $P \subseteq \m$) or divisible by $p$. Therefore
    \[
      P \cap \Z/p^t\Z = (p) \Z/p^t\Z \quad \text{and} \quad \text{im}(\Z/p^t\Z \to R \to R/P) = \Fp \subseteq R/P,
    \]
    so $R/P$ is a Noetherian local ring of equal characteristic $p$. By Theorem \ref{contentHH4.7}, it follows that every system of parameters for $R/P$ is a Q\babelhyphen{nobreak}sequence.
\end{proof}

\begin{cor}
Let $R$ be Noetherian with positive ring characteristic, and suppose $x_1, \hdots, x_d$ generate a height $d$ ideal in $R$. Then $\xs$ is a Q-sequence.
\end{cor}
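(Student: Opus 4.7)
The plan is to reduce modulo minimal primes and invoke Corollary \ref{contentHH4.9}. By Proposition \ref{minprimeQseq}, it suffices to verify that for every minimal prime $P$ of $R$, any length $d$ sequence in $R/P$ generating an ideal of height $d$ is a Q-sequence in $R/P$. So fix such a minimal prime $P$. Since $R$ has positive ring characteristic, so does its quotient $R/P$; but $R/P$ is a Noetherian domain, and the characteristic of a domain must be either $0$ or prime. Hence $R/P$ has prime characteristic $p$ (for some $p$ dividing the ring characteristic of $R$), and in particular it contains the field $\Fp$ as a subring.

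Now suppose $\ys = y_1, \ldots, y_d$ are elements of $R/P$ generating an ideal of height $d$. Taking $S = R/P$ itself as the Noetherian $(R/P)$-algebra in Corollary \ref{contentHH4.9}, the hypotheses are met (the ideal $(\ys)S$ visibly has height $d$, and $R/P$ contains a field), so the corollary yields $\h_{\ys}^d(R/P) = 1$. Thus $\ys$ is a Q-sequence in $R/P$, which completes the reduction step and hence the proof.

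There is no real obstacle here: once one makes the observation that every minimal prime quotient of a Noetherian ring of positive ring characteristic is automatically equicharacteristic of prime characteristic, the statement follows immediately from Proposition \ref{minprimeQseq} combined with Corollary \ref{contentHH4.9}. It is worth noting that this corollary does \emph{not} subsume Theorem \ref{pnil}: a local ring in which $p$ is nilpotent has positive ring characteristic only when $p$ itself is nilpotent as an integer, which is automatic for local rings of residue characteristic $p$ in which $p$ is nilpotent, but in Theorem \ref{pnil} the corollary's hypothesis is indeed in force, so Theorem \ref{pnil} could alternatively have been deduced from this argument via Corollary \ref{contentHH4.9}.
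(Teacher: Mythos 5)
Your proof is correct, but it takes a different route than the paper. The paper first localizes $R$ at a height-$d$ minimal prime $P$ of $(\xs)R$, observes that the residue characteristic $p$ of $R_P$ is necessarily nilpotent in $R_P$ (because the ring characteristic is positive), and then quotes Theorem \ref{pnil} followed by Proposition \ref{alg}. You instead reduce modulo the minimal primes of $R$ via Proposition \ref{minprimeQseq}, note that a Noetherian domain of positive characteristic has prime characteristic and hence contains $\Fp$, and apply Corollary \ref{contentHH4.9} with $S = R/P$. Both arguments are sound, and they are closely related — each ultimately rests on the observation that after passing to a domain quotient, positive ring characteristic forces equal (prime) characteristic, whence the equicharacteristic results of Hochster--Huneke apply. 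Your route has the minor advantage of bypassing Theorem \ref{pnil} entirely, establishing the corollary directly from the equicharacteristic machinery; the paper's route is designed to present the corollary as a consequence of Theorem \ref{pnil}.

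One remark: your final paragraph is internally contradictory as written. You begin by asserting that ``this corollary does not subsume Theorem \ref{pnil}'' but conclude that ``Theorem \ref{pnil} could alternatively have been deduced from this argument.'' The latter is what you actually mean and is correct — the hypothesis of Theorem \ref{pnil}, that $p$ is nilpotent in $R$, forces $R$ to have positive ring characteristic (a power of $p$), so the corollary's hypotheses hold. Indeed, the paper's own proof of Theorem \ref{pnil} is structured exactly like your reduction: pass to minimal primes, observe the quotient is equicharacteristic $p$, and invoke Theorem \ref{contentHH4.7}. You may want to rewrite that paragraph to drop the initial ``does not subsume'' claim.
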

\begin{proof}
    Let $P$ be a height $d$ minimal prime of $(\xs)R$, so that $\xs$ is a system of parameters for $R_P$. Since $R$ has a positive ring characteristic, the natural~map
    \[
         \Z \to R \to R_P \to R_P / PR_P
    \]
    is not injective. Therefore $R_P / PR_P$ is a field with positive characteristic $p$, where $p \in R_P$ is nilpotent.
    It follows from Theorm \ref{pnil} that $\xs$ is a Q-sequence on $R_P$, and from Proposition \ref{alg} that $\xs$ is a Q-sequence on $R$.
\end{proof}

\section{\texorpdfstring{$p$-monomials}{p-monomials}}
\label{pmonomsec}
We will now introduce a scheme for constructing new classes of mixed characteristic rings using “$p$-monomials," then discuss the basic properties of these objects and their relationship to the usual type of monomials.

Throughout this section, $S$ shall be a $\Z^n$-graded $\Zp$-algebra with $n$ variables. The primary case is the Laurent polynomial ring $S = \Zp[x_1^{\pm 1}, \hdots, x_n^{\pm 1}]$, but some variations include the polynomial ring or even the power series ring (with some care, since the power series ring is not graded). One could also expand the coefficients to any $\Zp$-algebra with the property that all elements can be uniquely written as a unit times a power of $p$; note that $\Q$ is such a $\Zp$ algebra, for example. The coefficient ring $(\Zp, p)$ could also be replaced with any mixed characteristic DVR $(V,t)$, in which case we would work with “$t$-monomials" instead of $p$-monomials.



\begin{defn}[$p$-monomials]
    \label{pmonoms}
    A \emph{\boldmath $p$-monomial} is an element of $S$ of the form $p^{t_0} x_1^{t_1} \hdots x_n^{t_n}$, where $\ts = (t_0, t_1, \hdots, t_n) \in \Z^{n+1}$.
\end{defn}
\begin{remark}
    \label{homog}
    Up to multiplication by a unit, these are the only $\Z^n$-homogeneous elements of $S$.
\end{remark}

\begin{remark}
      I will adapt standard terminology for monomials to the setting of $p$-monomials in the obvious way; for example, a \emph{$p$-monomial ideal} shall refer to an ideal generated by $p$-monomials, et cetera. It turns out that $p$-monomials share many good computational properties with monomials. This is somewhat surprising, considering that $p$-monomials and monomials have very different additive properties \textemdash namely, it is possible to add $p$-monomials together and get a completely different $p$-monomial! If $w$ is a $p$-monomial, then $w + \hdots + w = pw$ is also a $p$-monomial. Consequently there are some tools developed for monomials, such as Gr\"obner bases, which cannot be directly lifted into the $p$-monomial setting.
\end{remark}




\begin{thm}
    \label{phi}
    Let $S'$ be the $S$-algebra with one additional variable $x_0$, and let $\phi:S' \to S$ be the evaluation map $x_0 \mapsto p$.
    \begin{enumerate}
        \item The map $\phi$ is surjective and induces a bijective correspondence between monomials in $S'$ and $p$-monomials in $S$.
        \item If $I \subset S$ is a $p$-monomial ideal, then there is a unique monomial ideal $J \subset S'$ such that $p^{t_0}\xs^{\ts} \in I \text{ if and only if } x_0^{t_0}\xs^{\ts} \in J$. Moreover, $I$ is prime if and only if $J$ is prime, and this correspondence commutes with radicals in the sense that $\phi(\sqrt{J})S = \sqrt(I)$.
        \item If $R \subset S$ is a subalgebra generated by $p$-monomials, then there is a unique subalgebra $R' \subset S'$ generated by monomials such that $p^{t_0}\xs^{\ts} \in R \text{ if and only if } x_0^{t_0}\xs^{\ts} \in R'$. The evaluation map $\phi$ restricts to a surjective map $R' \to R$, where the kernel is the principal ideal $(x_0 - p)R'$.
    \end{enumerate}
\end{thm}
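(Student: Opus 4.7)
The plan is to use the $\Z^n$-grading on $S$ (and the induced $\Z^{n+1}$-grading on $S'$) to reduce each claim to a statement about individual $p$-monomials, exploiting the natural bijection between monomials of $S'$ and $p$-monomials of $S$ induced by $\phi$.

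For (1), surjectivity of $\phi$ is immediate because $\phi$ is a $\Zp$-algebra map and every element of $S$ is a $\Zp$-linear combination of monomials in $\xs$, all of which trivially lift. The bijection sends $x_0^{t_0}\xs^{\ts} \mapsto p^{t_0}\xs^{\ts}$; injectivity follows because $\Z^n$-homogeneity forces equality of the $\xs$-multi-degrees, and then $p^{t_0} = p^{s_0}$ in $\Zp$ forces $t_0 = s_0$.

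For (2), set $J$ to be the ideal of $S'$ generated by $\{x_0^{t_0}\xs^{\ts} : p^{t_0}\xs^{\ts} \in I\}$. The forward direction of the biconditional holds by construction; for the reverse, I would use the standard fact that a monomial in a monomial ideal of $S'$ is a monomial multiple of some generator (a divisibility statement in the $x_0$-exponent, which transfers to the analogous $p$-multiple under $\phi$). Uniqueness of $J$ holds since any monomial ideal satisfying the biconditional contains precisely the monomials in $J$. For primality, I would classify prime $p$-monomial ideals of $S$ as those generated by subsets of $\{p, x_1, \ldots, x_n\}$ (with the analogous classification in $S'$ using $x_0$ in place of $p$): any nonzero $p$-monomial generator $p^{t_0}\xs^{\ts}$ in a prime ideal is a product of $p$'s and $x_i$'s, and primality forces one such factor into the ideal. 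The bijection then matches these classifications. For the radical, first note that $\sqrt{I}$ is $\Z^n$-graded (since $I$ is), hence itself a $p$-monomial ideal. A $p$-monomial $p^{t_0}\xs^{\ts} \in \sqrt{I}$ has $p^{kt_0}\xs^{k\ts} \in I$ for some $k$; the divisibility characterization gives a $p$-monomial generator of $I$ dividing $p^{kt_0}\xs^{k\ts}$, which in turn forces $p^{t_0}\xs^{\ts}$ to be divisible by the squarefree part of that generator. This matches the standard squarefree-generator description of $\sqrt{J}$ applied through $\phi$.

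For (3), set $H := \{x_0^{t_0}\xs^{\ts} : p^{t_0}\xs^{\ts} \in R\}$ and define $R' := \Zp[H]$. Since $R$ is a $\Zp$-subalgebra, $H$ is a submonoid of the monomial monoid of $S'$ containing $1$, so $R'$ equals the $\Zp$-span of $H$; as distinct monomials of $S'$ are $\Zp$-linearly independent, a monomial lies in $R'$ iff it lies in $H$, giving the biconditional. Uniqueness follows because a $\Zp$-subalgebra generated by monomials is determined by its set of monomials. Surjectivity of $\phi|_{R'}$ follows from applying $\phi$ to a generating set. The main obstacle is the kernel computation: the containment $(x_0 - p)R' \subseteq \ker\phi|_{R'}$ is trivial since $x_0 - p \in R'$ (as $p \in R$ puts $x_0 \in H$). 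For the reverse, take $f \in R' \cap (x_0 - p)S'$ and expand $f = \sum_{\ts} \xs^{\ts} g_{\ts}(x_0)$ with $g_{\ts} \in \Zp[x_0]$; the $\Zp$-linear independence of $H$ inside $R'$ forces each $g_{\ts}(x_0) \in x_0^{\mu(\ts)}\Zp[x_0]$, where $\mu(\ts) := \min\{i \geq 0 : p^i \xs^{\ts} \in R\}$. Since $\phi(f) = 0$ and the $\xs$-grading separates each $\ts$-component, each $g_{\ts}(p) = 0$, so $g_{\ts}(x_0) = (x_0 - p) q_{\ts}(x_0)$ by polynomial division. An induction on $\mu(\ts)$, evaluating the relation $(x_0-p)q_{\ts}(x_0) \in x_0^k\Zp[x_0]$ at $x_0 = 0$ to conclude $q_{\ts}(0) = 0$ and peel off a factor of $x_0$ at each step, shows $q_{\ts}(x_0) \in x_0^{\mu(\ts)}\Zp[x_0]$. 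Therefore $h := \sum_{\ts} \xs^{\ts} q_{\ts}(x_0) \in R'$, and $f = (x_0 - p)h \in (x_0 - p)R'$, completing the kernel computation.
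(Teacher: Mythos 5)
Your proposal is correct, and the kernel computation in part (3) is essentially the paper's argument: the paper expands $f$ and $g$ in $\Z^{n+1}$-homogeneous components, derives the coefficient recursion $c(t_0,\ts) = b(t_0-1,\ts) - pb(t_0,\ts)$, and inducts upward on $t_0$ using $x_0 \in R'$, while you collect each fixed $\xs$-multidegree into a polynomial $g_{\ts}(x_0)$, factor out $(x_0-p)$, and peel off powers of $x_0$ by evaluating at $0$ — the same induction, just packaged as polynomial arithmetic in $\Zp[x_0]$. Where you genuinely diverge is in parts (2) and the first half of (3). The paper first proves a $p$-monomial analogue of monomial divisibility (Lemma~\ref{pmonominclusion}), whose proof requires a small argument about which power of $p$ dominates a sum in $\Zp$, and then deduces the biconditionals. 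You instead define $J$ and $R'$ as generated by \emph{all} correspondents of $p$-monomials in $I$ or $R$ (not merely a finite generating set), which makes the forward implications vacuous, and for the reverse implications you invoke the classical monomial divisibility fact in $S'$ and push it through the multiplicative bijection $\phi$. This lets you sidestep the $p$-monomial lemma entirely for the membership statements — a modest but real simplification, since you are shifting the burden to the well-known monomial version rather than re-proving its $p$-monomial cousin from scratch. The price is that your $J$ and $R'$ are defined by potentially large generating sets rather than finite lifts, but since nothing in the theorem requires finite presentation this costs nothing. The primality and radical verifications you sketch (classifying prime $p$-monomial ideals by subsets of $\{p, x_1, \ldots, x_n\}$, squarefree parts) are correct but slightly longer than necessary: since both $I$ and $J$ are homogeneous, primality and radical membership can be tested on homogeneous (hence $p$-monomial / monomial) elements alone, at which point both transfer immediately through the multiplicative bijection and the biconditional.
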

\begin{proof}
    (1) is clear. For parts (2) and (3), I will use the following lemma:
    \begin{lem}
        \label{pmonominclusion}
        Let $w_1, \hdots, w_m \in S$ be $p$-monomials.
        \begin{enumerate}
            \item Let $I = (w_1, \hdots, w_m)S$ be a $p$-monomial ideal. For any $p$-monomial $w$,
            \[
                w \in I \text{ if and only if } w \in (w_i)S \text{ for some } i.
            \]
            \item Let $R = \Zp[w_1, \hdots, w_m] \subset S$ subring. For any $p$-monomial $w$,
            \[
                w \in R \text{ if and only if } w = p^{h_0}w_1^{h_1}\hdots w_m^{h_m} \text{ for some } \vec{h} \in \N^{m+1}.
            \]
    \end{enumerate}
    \end{lem}
    \begin{proof}
        I will prove only the first statement; the proof of the second statement is similar in spirit and heavier in notation.
        
        It is clear that $w \in (w_i)S$ implies $w \in I$. Suppose that $w = p^{t_0}\xs^{\ts} \in I$, and write $p^{t_0}\xs^{\ts} = w_1 f_1 + \hdots + w_m f_m$ for some $f_i \in S$. By projecting to the graded component of multidegree $\ts$, we may assume that every term on the right hand side is of the form $w_i f_i = c_i p^{e_i} \xs^{\ts}$, where $c_i \in \Zp$ is a unit. Let $e$ be the minimum of the $e_i$. Clearing the monomial $\xs^{\ts}$ from both sides of the equation, we are left with
        \[
            p^{t_0} = c_1 p^{e_1} + \hdots + c_m p^{e_m} \in p^e \Zp.
        \]
        This can only happen if $t_0$ is greater than or equal to $e$, in which case there is some $i$ for which $t_0 \geq i$, and therefore $w = p^{t_0}\xs^{\ts} = p^{t_0 - e_i} \cdot c_i^{-1}w_i f_i \in (w_i)S$.
    \end{proof}

    It is not hard to check that (2) and the first part of (3) follow from Lemma \ref{pmonominclusion}. It remains only to show that the kernel of $\phi$ restricted to $R'$ is $(x_0-p)R'$.

    The kernel of $\phi|_{R'}$ is $(x_0 - p)S' \cap R'$, which certainly contains $(x_0 - p)R'$. Suppose on the other hand that $f = (x_0 - p)g$ is an element of $(x_0 - p)S' \cap R'$. I will show that in fact $g \in R'$.

    Write $\Z^{n+1}$-homogeneous decompositions
    \[
                 f = \sum_{t_0, \ts \in \Z^{n+1}} c(t_0, \ts) \cdot x_0^{t_0}\xs^{\ts}
                 \qquad \text{and} \qquad 
                 g = \sum_{t_0, \ts \in \Z^{n+1}} b(t_0, \ts) \cdot x_0^{t_0}\xs^{\ts}
    \]
    for $f$ and $g$. Note that these are in fact finite sums. So, let $m \in \Z$ be minimal such that some monomial $x_0^m \xs^{\ts}$ appears with nonvanishing coefficient in $g$. With this notation, we have:
    \begin{align*}
        f &= (x_0 - p)g\\
        &= \left( \sum  b(t_0, \ts) \cdot x_0^{t_0+1}\xs^{\ts} \right) + \left( \sum  -p \cdot b(t_0, \ts) \cdot x_0^{t_0}\xs^{\ts} \right)\\
        &= \left( \sum_{t_0 > m}  b(t_0-1, \ts) \cdot x_0^{t_0}\xs^{\ts} \right) + \left( \sum_{t_0 \geq m}  -p \cdot b(t_0, \ts) \cdot x_0^{t_0}\xs^{\ts} \right)
    \end{align*}
    Comparing the coefficient of $x_0^{t_0}\xs^{\ts}$ on both sides, we get that
    \begin{equation} \label{coeffs}
        c(t_0, \ts) = \begin{cases}
            b(t_0 - 1, \ts) - p b(t_0, \ts) & \text{if } t_0 > m\\
            -p b(t_0, \ts) & \text{if } t_0 = m.
        \end{cases}
    \end{equation}
    Since $R'$ is $\Z^{n+1}$-graded and $f \in R'$, it follows that $x_0 \xs^{\ts} \in R'$ whenever $c(t_0, \ts)$ is nonzero. I will use this fact to prove by induction on $t_0$ that $x_0 \xs^{\ts} \in R'$ whenever $b(t_0, \ts)$ is nonzero (and thus $g \in R'$ as desired).

    Let $t_0, \ts \in \Z^{n+1}$ be such that $b(t_0, \ts) \neq 0$. If $t_0 = m$, it follows from equation~\ref{coeffs} that $c(t_0, \ts) = -pb(t_0, \ts) \neq 0$. Therefore $x_0 \xs^{\ts} \in R'$.

    Now let $t_0 > m$, and suppose for contradiction that $x_0 \xs^{\ts} \notin R'$. Then
    \begin{align*}
        &c(t_0, \ts) = 0 = b(t_0-1, \ts) - pb(t_0, \ts) & &\text{by \ref{coeffs}}\\
        &0 \neq p b(t_0, \ts) = b(t_0 - 1, \ts)\\
        &x_0^{t_0-1}\xs^{\ts} \in R' & &\text{by inductive hypothesis}\\
        &x_0^{t_0}\xs^{\ts} = x_0 \cdot x_0^{t_0-1}\xs^{\ts} \in R' & &\text{since } x_0 \in R',
    \end{align*}
    contradicting the assumption that $x_0^{t_0} \xs^{\ts} \notin R'$. This finishes the proof that $g \in R'$ and $f = (x_0 - p)g \in (x_0 - p)R'$.
\end{proof}

\section{Mixed characteristic analogue of Stanley-Reisner rings}
The ring $S$ will continue to be a $\Z^n$-graded $\Zp$-algebra with $n$ variables, as in the previous section. The quotient of $S$ by a monomial ideal  is called a \emph{Stanley-Reisner ring}. I will show that Conjecture \ref{sopcont} holds for a Stanley-Reisner variant, where the ideal is instead generated by $p$-monomials.

\begin{thm}
    \label{sr}
    Let $R = S/I$, where $I$ is a $p$-monomial ideal. Then every length $d$ sequence in $R$ with a height $d$ minimal prime is a Q-sequence on $R$.
\end{thm}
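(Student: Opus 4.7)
The plan is to apply Proposition \ref{minprimeQseq} to reduce the problem to understanding $R/\mathfrak{p}$ for each minimal prime $\mathfrak{p}$ of $R$, then use Theorem \ref{phi} to identify each such quotient as a polynomial ring over $\Zp$ or $\Fp$, and finally conclude via the Cohen-Macaulay case of Proposition \ref{reg}.

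By Theorem \ref{phi}(2) together with $\ker(\phi) = (x_0 - p)S'$, one obtains $R = R'/(x_0 - p)R'$, where $R' = S'/J$ is the classical Stanley-Reisner ring associated to the monomial ideal $J \subset S'$ corresponding to $I$. The minimal primes of $R$ are then the minimal primes of the principal ideal $(x_0 - p)R'$ in $R'$. I would analyze these one minimal prime $\mathfrak{q}_0$ of $R'$ at a time: each such $\mathfrak{q}_0$ is a classical monomial prime generated by some subset of $\{\bar{x}_0, \bar{x}_1, \hdots, \bar{x}_n\}$, so $R'/\mathfrak{q}_0$ is a polynomial ring over $\Zp$ in the remaining variables. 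The image of $x_0 - p$ there is either $x_0 - p$ (when $\bar{x}_0 \notin \mathfrak{q}_0$) or $-p$ (when $\bar{x}_0 \in \mathfrak{q}_0$); in either case this is a prime element of a polynomial ring over $\Zp$, so there is a unique minimal prime of $(x_0 - p)R'$ above $\mathfrak{q}_0$, and the corresponding quotient $R/\mathfrak{p}$ is either $\Zp[y_1, \hdots, y_m]$ or $\Fp[y_1, \hdots, y_m]$.

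Both of these are Noetherian and Cohen-Macaulay, so Proposition \ref{reg} shows that every length-$d$ sequence in $R/\mathfrak{p}$ generating a height-$d$ ideal is a Q-sequence. Proposition \ref{minprimeQseq} then transfers this conclusion to $R$.

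The main obstacle is the minimal-prime analysis in the middle step. Although the analogous fact for classical monomial ideals is standard, minimal primes of the $p$-monomial ideal $I$ correspond under $\phi^{-1}$ to minimal primes of $J + (x_0 - p)S'$ rather than of $J$, and $x_0 - p$ is inhomogeneous for the $\Z^{n+1}$-grading on $S'$. The route through each $R'/\mathfrak{q}_0$ sidesteps this inhomogeneity, but one still has to verify carefully (using the fact that no nonzero monomial lies in either $(x_0 - p)$ or $(p)$ inside a polynomial ring over $\Zp$) that the candidate primes $\mathfrak{q}_0 + (x_0-p)R'$ and $\mathfrak{q}_0 + (p)R'$ are genuinely minimal over $(x_0 - p)R'$ and account for every minimal prime of that ideal.
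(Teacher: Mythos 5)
Your proposal is correct, and it arrives at the same final picture as the paper (every quotient $R/\mathfrak{p}$ by a minimal prime is $\Zp[\ys]$ or $\Fp[\ys]$ up to Laurent-izing some variables, hence Noetherian and Cohen-Macaulay, so Propositions \ref{reg} and \ref{minprimeQseq} close the argument), but you classify the minimal primes by a genuinely different route. The paper's Lemma \ref{srminprimes} works intrinsically in the $p$-monomial world: after passing to radicals, it writes $J = Q_1 \cap \hdots \cap Q_h$ and checks directly on $p$-monomials that $I = \phi(Q_1)S \cap \hdots \cap \phi(Q_h)S$, so $\text{Min}(I) = \{\phi(Q_i)S\}$, and these are generated by subsets of $\{p, x_1, \hdots, x_n\}$. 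You instead lift to $R' = S'/J$ via $R \cong R'/(x_0 - p)R'$ and determine $\text{Min}((x_0-p)R')$ by passing to each minimal (monomial) prime $\mathfrak{q}_0$ of $R'$, exploiting that the image of $x_0 - p$ in $R'/\mathfrak{q}_0$ is a prime element. The obstacle you flag is genuine but resolves cleanly: every minimal prime of $(x_0-p)R'$ contains some $\mathfrak{q}_0 \in \text{Min}(R')$ and, modding out, must be $\mathfrak{q}_0 + (x_0-p)R'$; conversely, a prime $\mathfrak{p}' \subseteq \mathfrak{q}_0 + (x_0-p)R'$ containing $x_0-p$ contains some monomial prime $\mathfrak{q}_1 \in \text{Min}(R')$, and since $(x_0-p)(R'/\mathfrak{q}_0)$ contains no nonzero monomial, this forces $\mathfrak{q}_1 \subseteq \mathfrak{q}_0$, hence $\mathfrak{q}_1 = \mathfrak{q}_0$ and $\mathfrak{p}' = \mathfrak{q}_0 + (x_0-p)R'$. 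What your route buys is that the minimal-prime analysis is carried out entirely with standard facts about prime elements in polynomial rings; what the paper's Lemma \ref{srminprimes} buys is a self-contained and reusable structural statement — that the bijection $\phi$ of Theorem \ref{phi} carries primary decompositions of monomial ideals to primary decompositions of $p$-monomial ideals — which is of independent interest beyond this theorem.
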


\begin{remark}
    To prove Theorem \ref{sr}, I will classify the minimal primes of $R$ and check that the statement holds modulo each minimal prime (see Proposition \ref{minprimeQseq}). Analogously to the case of monomial ideals, the minimal primes of a $p$-monomial ideal are generated by subsets of $\{p, x_1, \hdots, x_n\}$:
\end{remark}

\begin{lem}
    \label{srminprimes}
    Let $I$ be a $p$-monomial ideal, and let $J$ be the monomial ideal such that $\phi(J)S = I$, where $\phi$ is the evaluation map $\phi(x_0) = p$. Then
    \[
       \text{Min}(I) = \{\phi(Q)S : Q \in \text{Min}(J)\}.
    \]
    Therefore the minimal primes of $I$ are generated by subsets of $\{p, x_1, \hdots, x_n\}$.
\end{lem}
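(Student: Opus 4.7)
The strategy is to lift to $S'$ via the evaluation map $\phi$ of Theorem \ref{phi} and reduce to the classical fact that every minimal prime of a monomial ideal in $S'$ is generated by a subset of $\{x_0, x_1, \hdots, x_n\}$. The central tool is Theorem \ref{phi}(2), which gives a bijection between $p$-monomial ideals of $S$ and monomial ideals of $S'$ preserving primeness; a short argument using Lemma \ref{pmonominclusion} also shows this bijection preserves strict inclusions, which will be used repeatedly.

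For the containment $\{\phi(Q)S : Q \in \text{Min}(J)\} \subseteq \text{Min}(I)$: each $\phi(Q)S$ is prime by Theorem \ref{phi}(2), generated by the corresponding subset of $\{p, x_1, \hdots, x_n\}$, and contains $I = \phi(J)S$ since $J \subseteq Q$. Minimality follows by taking any prime $P' \subseteq \phi(Q)S$ containing $I$ and lifting to $S'$: $\phi^{-1}(P')$ contains $J$, hence some $Q_1 \in \text{Min}(J)$, and pushing forward gives $\phi(Q_1)S \subseteq P' \subseteq \phi(Q)S$; inclusion-preservation of the bijection yields $Q_1 \subseteq Q$, so $Q_1 = Q$ by minimality, forcing $P' = \phi(Q)S$.

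For the reverse containment $\text{Min}(I) \subseteq \{\phi(Q)S : Q \in \text{Min}(J)\}$: any $P \in \text{Min}(I)$ lifts to $\phi^{-1}(P)$, a prime of $S'$ containing $J + (x_0 - p)S'$, hence containing some $Q \in \text{Min}(J)$ (using Noetherianness of $S'$). Then $I \subseteq \phi(Q)S \subseteq P$ with $\phi(Q)S$ prime, so minimality of $P$ forces $P = \phi(Q)S$. The main subtle point throughout is verifying that the correspondence of Theorem \ref{phi}(2) preserves inclusions and that every minimal prime of $I$ in $S$ corresponds via $\phi^{-1}$ to a prime of $S'$ from which a minimal prime of $J$ can be extracted; modulo these observations, the argument is a routine transport of structure along $\phi$.
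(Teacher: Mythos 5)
Your proof is correct, but it takes a somewhat different route than the paper. The paper's argument is structural: it first reduces (via Theorem~\ref{phi}(2)) to the case where $I$ and $J$ are radical, writes $J = Q_1 \cap \cdots \cap Q_h$ as the intersection of its minimal primes, and then verifies directly on $p$-monomials that $I = \phi(Q_1)S \cap \cdots \cap \phi(Q_h)S$. Since the $\phi(Q_i)S$ are prime (Theorem~\ref{phi}(2)) and mutually incomparable (by the inclusion-preservation of the bijection applied to the incomparable $Q_i$), this exhibits the minimal primes of $I$ in one stroke, with no need to discuss $\phi^{-1}$ of a general prime of $S$. Your argument instead proves the two containments of the set equality one prime at a time, repeatedly passing to $\phi^{-1}(P)$ or $\phi^{-1}(P')$ in $S'$ and extracting a minimal prime of $J$ from this preimage. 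This is a legitimate and perhaps more elementary route — it avoids the reduction to the radical case entirely and uses nothing beyond the fact that the preimage of a prime is prime, plus inclusion-preservation of the correspondence. One small point of precision: for inclusion-preservation of the $p$-monomial/monomial ideal correspondence, the relevant fact is the defining iff-membership condition in Theorem~\ref{phi}(2), not Lemma~\ref{pmonominclusion} per se (the latter is the engine behind Theorem~\ref{phi}(2), but inclusion-preservation follows formally from the bijection's defining property). Also worth noting: the preimage $\phi^{-1}(P)$ in $S'$ contains $(x_0 - p)S'$ and hence is not a monomial ideal, but your argument never needs it to be — you only use that it is a prime containing $J$, which is enough to extract a minimal prime of $J$.
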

\begin{proof}
    By part (2) of Theorem \ref{phi}, we may assume without loss of generality that $I$ and $J$ are radical. Write $J = Q_1 \cap \hdots \cap Q_h$, where the $Q_i$ are the minimal primes of $J$. Note that each $Q_i$ is generated by a subset of the variables $\{x_0, x_1, \hdots, x_n\}$, and so the corresponding $p$-monomial ideals $P_i := \phi(Q_i)S$ are generated by subsets of $\{p, x_1, \hdots, x_n\}$. In order to show that $I = P_1 \cap \hdots \cap P_h$, it suffices to check for $p$-monomials, since all of these ideals have $p$-monomial generators. Indeed,
    \begin{align*}
        p^{t_0}\xs^{\ts} \in I &\text{ if and only if } x_0^{t_0}\xs^{\ts} \in J\\
        &\text{ if and only if } x_0^{t_0}\xs^{\ts} \in Q_i \text{ for every } i\\
        &\text{ if and only if } p_0^{t_0}\xs^{\ts} \in P_i \text{ for every } i.
    \end{align*}
    I have shown that the minimal primes of $I$ are among $P_1, \hdots, P_h$. Since $I$ is homogeneous, its minimal primes are also homogeneous (and therefore generated by $p$-monomials). Since the correspondence between monomial and $p$-monomial ideals is inclusion-preserving, it follows that $P_1, \hdots, P_h$ are precisely the minimal primes of $I$.
\end{proof}
    
We are now equipped to prove the theorem:
\begin{proof}[Proof of Thm \ref{sr}]
    For any $P \subset R$ minimal, by Lemma \ref{srminprimes}, we have either
    \[
    \frac{R}{P} \cong \frac{S}{(x_{i_1}, \hdots, x_{i_k})} \qquad \text{or} \qquad  \frac{R}{P} \cong \frac{\Fp \otimes S}{(x_{i_1}, \hdots, x_{i_k})}.
    \]
    Either way, the quotient is a Noetherian Cohen-Macaulay ring, and thus satisfies Conjecture \ref{sopcont} by Proposition \ref{reg}. It follows from Proposition \ref{minprimeQseq} that $R$ satisfies Conjecture \ref{sopcont} as well.
\end{proof}

\section{\texorpdfstring{Semigroup and $p$-semigroup rings}{Semigroup and p-semigroup rings}}
We now examine Conjecture \ref{sopcont} for finitely generated $\Z^n$-graded $\Zp$-subalgebras of the Laurent polynomial ring in $n$ variables, a class which I will refer to as \emph{\boldmath affine $p$-semigroup rings}. The goal is to prove the following result:
\begin{thm}
    \label{bigsec5thm}
    Let $R$ be an affine $p$-semigroup ring. Then any length $d$ sequence in $R$ with a minimal prime of height $d$ is a Q-sequence on $R$.
\end{thm}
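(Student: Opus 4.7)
The plan is to exhibit a module-finite, Cohen-Macaulay $R$-algebra and then apply Proposition \ref{reg}. Using Theorem \ref{phi}(3), I would lift $R$ to a finitely generated monomial $\Zp$-subalgebra $R' \subset S'$ with $R \cong R'/(x_0-p)R'$. Since $S'$ is a domain, so is $R'$. Let $\tilde{R}'$ denote the normalization of $R'$ inside its fraction field. Because $\Zp$ is excellent, $\tilde{R}'$ is module-finite over $R'$; combinatorially, $\tilde{R}'$ is the $\Zp$-algebra generated by the saturation of the affine semigroup of exponents defining $R'$, so $\tilde{R}'$ is again a normal affine monomial $\Zp$-algebra.

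Next I would argue that $\tilde{R}'$ is Cohen-Macaulay. Since $\tilde{R}'$ is a domain and $p \neq 0$, the element $p$ is a non-zerodivisor on $\tilde{R}'$, and $\tilde{R}'/p\tilde{R}'$ is a normal affine semigroup $\Fp$-algebra, Cohen-Macaulay by Hochster's classical theorem; this propagates Cohen-Macaulayness of $\tilde{R}'$ at every prime containing $p$. At primes not containing $p$, one localizes into $\tilde{R}'[p^{-1}]$, a normal affine semigroup ring over $\Q$, which is again Cohen-Macaulay by Hochster's theorem. Combining the two cases, $\tilde{R}'$ is Cohen-Macaulay globally.

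Since $\tilde{R}'$ is a domain, $x_0 - p$ is a non-zerodivisor, so $\tilde{R} := \tilde{R}'/(x_0-p)\tilde{R}'$ is Cohen-Macaulay of dimension $\dim \tilde{R}' - 1$. Because $\tilde{R}'$ is module-finite over $R'$, the quotient $\tilde{R}$ is module-finite over $R = R'/(x_0-p)R'$, and so $\tilde{R}$ is a Noetherian Cohen-Macaulay $R$-module. Proposition \ref{reg} now yields the conclusion: any length-$d$ sequence in $R$ whose generating ideal has a minimal prime of height $d$ is a Q-sequence.

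The main obstacle is verifying Cohen-Macaulayness of the normal affine monomial $\Zp$-algebra $\tilde{R}'$. Hochster's theorem delivers Cohen-Macaulayness of the two fibers over $\Zp$ for free, so the real content is gluing those fibers together along the regular element $p$ (which is why the domain hypothesis on $\tilde{R}'$ is essential), rather than citing a pre-packaged statement on semigroup rings over a DVR base.
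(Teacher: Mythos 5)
Your plan tracks the paper's Section 5.2 closely: lift $R$ to a monomial subalgebra $R'$ using Theorem \ref{phi}(3), normalize, kill $x_0 - p$, and apply Proposition \ref{reg}. Your argument for the Cohen--Macaulayness of $\tilde{R}'$ is a genuine variant: rather than invoking Theorem \ref{toricHthm1} directly (which asserts that $A[M]$ is Cohen--Macaulay for every Cohen--Macaulay coefficient ring $A$, in particular $A = \Zp$), you check the fibers $\tilde{R}'/p\tilde{R}'$ and $\tilde{R}'[p^{-1}]$ over the residue field and the fraction field of $\Zp$, and glue along the regular element $p$. That gluing is correct and trades a pre-packaged statement over a Cohen--Macaulay base for an elementary reduction to the field case.

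The gap is pointedness. Theorem \ref{bigsec5thm} is stated for arbitrary affine $p$-semigroup rings, whose associated semigroup $M \subseteq \Z^{n+1}$ (and hence its saturation $\tilde{M}$) need not generate a pointed cone. As the paper notes, Proposition \ref{toricHprop1} and Theorem \ref{toricHthm1} are formulated by Hochster for semigroups of monomials with exponent vectors in $\N^n$, and extend only to pointed semigroups via an embedding into $\N^n$. When you invoke Hochster's theorem for $\Fp[\tilde{M}]$ and $\Q[\tilde{M}]$, you are implicitly assuming it holds for a possibly non-pointed $\tilde{M}$; this requires proof. The paper's Section \ref{locsec} supplies exactly this, showing (Theorems \ref{localizingnormal} and \ref{localizing}, via Gordon's lemma and the Eakin--Nagata theorem) that every affine semigroup or $p$-semigroup ring is a localization of a pointed one, so the pointed case suffices and Cohen--Macaulayness passes to the localization. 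Without that reduction, or an explicit restriction to pointed $R$, your proposal only establishes the pointed subcase of the theorem.
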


I will first consider the special case of a subring generated by finitely many monomials $x_1^{t_1}\hdots x_n^{t_n}$, a class of rings which is classically referred to as \emph{affine semigroup rings}. Next I will generalize to  finitely-generated multigraded subalgebras; note that these are subalgebras generated by finitely many $p$-monomials. I refer to this class of rings as \emph{$p$-semigroup rings}. The correspondence from Theorem \ref{phi} will allow us to reduce the general case to the semigroup case. The strategy in both cases is to prove that the normalization of the ring is Noetherian and Cohen-Macaulay (see Proposition \ref{reg}).

I will now state a few results which are central to my arguments. The following two statements are due to a paper of Mel Hochster \cite[Prop 1, Thm~1]{toricH}:

\begin{prop}
    \label{toricHprop1}
      Let $M$ be a semigroup of monomials in the variables $x_1, \hdots, x_n$. Then the following are equivalent:
      \begin{enumerate}[(1)]
          \item The semigroup ring $K[M]$ is normal and Noetherian for some field $K$.
          \item $M$ is a normal semigroup.
          \item  $M$ is finitely generated as a semigroup and for every Noetherian normal domain $D$, the subring $D[M] \subset D[x_1, \hdots, x_n]$ is normal.
      \end{enumerate}
  \end{prop}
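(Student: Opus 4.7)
The plan is to prove the cyclic chain of implications (1) $\Rightarrow$ (2) $\Rightarrow$ (3) $\Rightarrow$ (1). The implication (3) $\Rightarrow$ (1) is essentially immediate: any field $K$ is a Noetherian normal domain, so $K[M]$ is normal by (3), and finite generation of $M$ as a semigroup makes $K[M]$ a finitely generated $K$-algebra, hence Noetherian.

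For (1) $\Rightarrow$ (2), I would first show that Noetherianness of $K[M]$ forces $M$ to be finitely generated as a semigroup. The ring $K[M]$ is graded by the group $\mathrm{gp}(M)$, and the ideal generated by the non-identity monomials admits a finite generating set which, after projecting to homogeneous pieces, can be taken to consist of monomials; the exponents of these generators give a finite semigroup generating set of $M$. Normality then transfers from $K[M]$ to $M$: if $m \in \mathrm{gp}(M)$ and $nm \in M$ for some $n > 0$, then $x^m$ lies in the fraction field of $K[M]$ and satisfies the monic equation $y^n = x^{nm}$ over $K[M]$, so $x^m \in K[M]$ and hence $m \in M$.

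The main substance — and the main obstacle — is (2) $\Rightarrow$ (3). Here I would invoke Gordan's lemma to represent a finitely generated normal semigroup as
\[
    M \;=\; \mathrm{gp}(M) \cap C,
\]
where $C \subset \mathrm{gp}(M) \otimes_\Z \R$ is a rational polyhedral cone. Writing $C$ as a finite intersection of closed rational half-spaces $H_i = \{v : \lambda_i(v) \geq 0\}$, one obtains $M = \bigcap_i M_i$, where $M_i := \mathrm{gp}(M) \cap H_i$. After a change of basis, each $M_i$ is the set of Laurent exponent vectors in a free abelian group with one distinguished coordinate required to be nonnegative, so $D[M_i]$ is (a localization of) a polynomial ring over $D$, hence normal whenever $D$ is. The key step is then to verify the identity
\[
    D[M] \;=\; \bigcap_i D[M_i]
\]
inside $D[\mathrm{gp}(M)]$, from which normality of $D[M]$ follows because an intersection of normal subrings of a common field of fractions is normal.

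The hardest part is this last intersection identity. The forward inclusion is clear; for the reverse, I would use the $\mathrm{gp}(M)$-grading on $D[\mathrm{gp}(M)]$ to reduce to a homogeneous element, which is of the form $d \cdot x^m$ with $d \in D$, and then observe that $d \cdot x^m \in D[M_i]$ for every $i$ forces $m \in M_i$ for every $i$ (since $d \neq 0$ in the normal domain $D$), and thus $m \in \bigcap_i M_i = M$. The technical care required is in making the grading argument work uniformly across all half-spaces, which is why the statement restricts to Noetherian normal $D$ rather than an arbitrary commutative ring.
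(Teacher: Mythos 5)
The paper does not prove this statement; it cites it verbatim from Hochster's paper on rings of invariants of tori (\cite[Prop.~1]{toricH}), so there is no in-paper argument to compare against. That said, your outline is the standard proof, essentially Hochster's, and it is correct in substance. The chain (3)$\Rightarrow$(1) (a field is a Noetherian normal domain; finite generation of $M$ gives Noetherianity), (1)$\Rightarrow$(2) (saturation of $M$ follows from integral closedness of $K[M]$ via the monic equation $y^n - x^{nm}$, using the $\mathrm{gp}(M)$-grading), and (2)$\Rightarrow$(3) via the half-space decomposition $M = \mathrm{gp}(M) \cap \bigcap_i H_i$, together with the observation that each $D[\mathrm{gp}(M) \cap H_i]$ is a (Laurent) polynomial ring over $D$ and hence normal, and the graded intersection identity $D[M] = \bigcap_i D[\mathrm{gp}(M)\cap H_i]$, is exactly how this result is established.

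Two small inaccuracies worth flagging. First, your closing sentence attributes the hypothesis ``Noetherian normal domain'' on $D$ to a need to ``make the grading argument work uniformly across half-spaces.'' That is not where the hypotheses come from: normality of $D$ is what makes each $D[\mathrm{gp}(M)\cap H_i]$ normal, and the domain hypothesis is what gives a common fraction field for the intersection argument. Noetherianity of $D$ plays no role in verifying that $D[M]$ is integrally closed; it is in the statement essentially so that (3) loops cleanly back to (1). Second, in (1)$\Rightarrow$(2) the deduction of finite generation of $M$ from Noetherianity of $K[M]$ via the ideal of non-identity monomials needs a little more care: when $M$ contains nontrivial units that ideal is the unit ideal (e.g.\ $M = \Z$), so the exponents of its generators do not by themselves generate $M$. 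One has to handle the group of units of $M$ separately. This is classical and does not affect the correctness of the overall result, but the one-line argument as written does not cover that case.
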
  

\begin{thm}
    \label{toricHthm1}
    Let $M$ be a normal semigroup of monomials. Then $A[M]$ is Cohen-Macaulay for every Cohen-Macaulay ring $A$.
\end{thm}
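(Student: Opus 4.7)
The plan is to reduce to the case where $A = k$ is a field, and then realize $k[M]$ as a ring of invariants of a torus acting on a polynomial ring, invoking the Hochster--Roberts theorem on reductive invariants.

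For the reduction, observe that $A[M]$ is a free $A$-module (with basis $M$), hence faithfully flat over $A$. By the standard transfer of Cohen--Macaulayness along a flat map with Cohen--Macaulay fibers, it is enough to check that every fiber $\kappa(\mathfrak{p}) \otimes_A A[M] \cong \kappa(\mathfrak{p})[M]$ is Cohen--Macaulay. So I may assume $A = k$ is an arbitrary field.

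For the field case, use the toric description guaranteed by Proposition \ref{toricHprop1}: $M$ is finitely generated, its group completion $L = \Z M$ is finitely generated free abelian, and $M = C \cap L$ for a rational polyhedral cone $C \subseteq L \otimes_\Z \R$. Writing $C$ as an intersection of rational half-spaces $\{v : \ell_i(v) \geq 0\}$ for $i = 1, \dots, r$ yields an inclusion $L \hookrightarrow \Z^r$ sending $M$ into $\N^r$, which corresponds to a ring inclusion $k[M] \hookrightarrow k[T_1, \dots, T_r]$. The cokernel $\Z^r / L$ determines a diagonalizable group scheme $T$ acting on the polynomial ring with weights read off from $\Z^r$, and normality of $M$ is precisely what identifies $k[M]$ with the full invariant ring $k[T_1, \dots, T_r]^T$ rather than some proper subring.

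Since $T$ contains a torus as its connected component, and tori are linearly reductive in any characteristic, one obtains a $k[M]$-linear Reynolds splitting of the inclusion $k[M] \hookrightarrow k[T_1, \dots, T_r]$, and the Hochster--Roberts theorem then forces $k[M]$ to be Cohen--Macaulay. The main obstacle is the explicit identification $k[M] = k[T_1, \dots, T_r]^T$: one must verify that under the chosen embedding $\N^r \cap L = M$, which is where the normality hypothesis is essential (without it, the invariants would be a larger saturated semigroup ring). A further subtlety is that if $\Z^r / L$ has $p$-torsion in characteristic $p$, the group scheme $T$ need not be linearly reductive; this can be handled by refining the embedding to enforce a torsion-free quotient, or sidestepped entirely by computing $H^i_{\mathfrak{m}}(k[M])$ directly through the Ishida complex, where the combinatorics of $C$ together with normality force vanishing of local cohomology below top degree.
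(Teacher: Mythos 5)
The paper does not supply a proof here; the theorem is quoted directly from Hochster's paper \cite{toricH}, where it is established by a self-contained combinatorial argument (the ``polytopes'' in the title refer to a polyhedral decomposition of the semigroup used to verify the Cohen--Macaulay property directly). Your proposal reaches the conclusion by a genuinely different route, deducing it from the Hochster--Roberts theorem on invariants of linearly reductive groups. This works and is a standard modern way to see the result, but note that Hochster--Roberts (1974) postdates Hochster's 1972 paper and was in part motivated by generalizing exactly this theorem, so it is not the argument in the cited source. Your reduction to the field case via flat fibers is fine, and so is the realization $k[M]=k[T_1,\dots,T_r]^T$ via the facet functionals of $C$: normality gives $M = L\cap C$, and the embedding sends $C$ to the preimage of $\R_{\geq 0}^r$, so $L\cap \N^r = M$ and the degree-zero piece of the $\Z^r/L$-grading on $k[T_1,\dots,T_r]$ is precisely $k[M]$.

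One genuine misstep worth flagging: your worry about linear reductivity in characteristic $p$ is unfounded. The group scheme $T = \text{Spec}\, k[\Z^r/L]$ is diagonalizable, not merely an extension of a torus by an abstract finite group, and every diagonalizable group scheme is linearly reductive in every characteristic --- its representation category is that of $\Z^r/L$-graded $k$-vector spaces, which is semisimple --- so $\mu_{p^k}$-factors cause no harm. (The failure of linear reductivity in characteristic $p$ is a phenomenon of non-diagonalizable groups such as $\Z/p$ acting by unipotent matrices, or $\alpha_p$.) Your proposed workaround would also succeed if one wanted it: adjoin further functionals from $C^{\vee}\cap L^{*}$ until they generate $L^{*}$ as a group, which makes $L\hookrightarrow\Z^r$ a split injection with free cokernel, so $T$ becomes a genuine torus. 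And the Ishida complex gives a third, fully combinatorial, modern route. But the simplest fix is to delete the worry and apply Hochster--Roberts directly to the diagonalizable $T$.
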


Taken together, these imply that a finitely generated normal semigroup ring over a normal Cohen-Macaulay ring of coefficients is Cohen-Macaulay. This is the linchpin of my argument for the semigroup ring case.

It should be noted that Hochster states these results only for the case where the semigroup of monomial exponent vectors is contained in $\N^n$. However, every pointed semigroup is isomorphic to a subsemigroup of $\N^n$ \cite[Cor 7.23]{cca}, so his results can be safely generalized to any pointed semigroup. Sections \ref{sgsubsec} and \ref{psgsubsec} will deal only with pointed semigroups. In Section \ref{locsec}, I will argue that all affine $p$-semigroup rings are localizations of pointed affine $p$-semigroup rings, and use this to finish the proof of Theorem \ref{bigsec5thm}.

For both the semigroup and $p$-semigroup cases, I will also use the following theorem from a textbook of Irena Swanson and Craig Huneke \cite[Thm 2.3.2]{intclosSH}:

\begin{thm}
    \label{intclosSHthm}
    Let $d,e \in \N$, let $G = \N^d \times \Z^e$, and let $R \subseteq S$ be commutative $G$-graded rings. Then the integral closure of $R$ in $S$ is $G$-graded.
\end{thm}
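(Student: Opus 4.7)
My plan is to reduce to the case of a totally ordered grading group and then extract the leading homogeneous component from an integral dependence relation. For the reduction, embed $G = \N^d \times \Z^e$ into $\Z^{d+e}$ equipped with the lexicographic total order. Any $G$-grading on a ring extends to a $\Z^{d+e}$-grading by declaring $A_\gamma = 0$ for $\gamma \in \Z^{d+e} \setminus G$; this is well-defined as a grading because $G$ is closed under addition. If $\overline{R}^S$ can be shown to be $\Z^{d+e}$-graded, then since its homogeneous components all lie in $S$, whose support is contained in $G$, the decomposition is automatically $G$-supported. So it suffices to treat the totally ordered case.

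Now assume $G$ is totally ordered. Let $s \in \overline{R}^S$ have $G$-homogeneous decomposition $s = s_{g_1} + \cdots + s_{g_k}$ with $g_1 > \cdots > g_k$. I proceed by induction on $k$; the case $k = 1$ is trivial. In the inductive step, it suffices to show $s_{g_1} \in \overline{R}^S$, since then $s - s_{g_1}$ lies in $\overline{R}^S$ and has strictly fewer nonzero homogeneous components, so the induction hypothesis applies to its pieces. Take an integral equation $s^n + \sum_{i < n} r_i s^i = 0$ with $r_i \in R$, and extract the $G$-homogeneous component of degree $n g_1$ from each side. The contribution of $s^n$ at degree $n g_1$ is exactly $s_{g_1}^n$, because a product $s_{g_{j_1}} \cdots s_{g_{j_n}}$ reaches total degree $n g_1$ only when every $j_l = 1$. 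Provided that $\deg(r_i) \leq (n-i) g_1$ for each $i$, the contribution from $r_i s^i$ at degree $n g_1$ reduces to $(r_i)_{(n-i) g_1} \cdot s_{g_1}^i$, so the degree-$n g_1$ slice of the equation becomes
\[
s_{g_1}^n + \sum_{i=0}^{n-1} (r_i)_{(n-i) g_1} \cdot s_{g_1}^i = 0,
\]
which is monic in $s_{g_1}$ with coefficients in $R$ and exhibits $s_{g_1}$ as integral over $R$.

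The main obstacle will be ensuring the degree bound $\deg(r_i) \leq (n-i) g_1$, since this need not hold for an arbitrary integral equation; when it fails, spurious cross terms appear in the degree-$n g_1$ slice. To sidestep this, I would refine the grading using a generic group homomorphism $\omega \colon G \to \Z$ chosen so that $\omega$ separates the finitely many $G$-degrees appearing in the supports of $s$ and of the $r_i$, isolating the top-degree contributions cleanly in the induced $\Z$-grading. An alternative is to produce, via truncation or an iterative modification of the integral equation, a monic relation whose coefficients satisfy the degree bound from the outset. This degree-separation step is the combinatorial crux of the theorem; the rest of the argument is the leading-component extraction described above.
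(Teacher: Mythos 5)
The paper itself does not prove this statement; it cites it directly from Swanson--Huneke \cite[Thm 2.3.2]{intclosSH}, so I'll assess your argument against the standard proof of that result.

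Your plan---reduce to a total order, extract the degree-$n g_1$ slice of an integral relation---runs into exactly the problem you flag, and the two fixes you offer do not resolve it. Choosing a generic homomorphism $\omega \colon G \to \Z$ only ensures that the (finitely many) degrees occurring in $s$ and the $r_i$ map to distinct integers; it does nothing to enforce the bound $\deg(r_i) \leq (n-i)g_1$, which is what kills the cross terms. Indeed no refinement of the grading can impose such a bound: the integral relation is given, and if $\operatorname{topdeg}(r_i) + i g_1 > n g_1$ for some $i$, then the leading degree $D$ of the whole relation exceeds $n g_1$, and the degree-$D$ slice is $\sum_{i \in I} (r_i)_{D - i g_1}\, s_{g_1}^i = 0$ with $n \notin I$---a non-monic relation that does not directly exhibit $s_{g_1}$ as integral. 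If $s_{g_1}$ happens to be a non-zerodivisor one can iterate on $D$ and eventually push down to $n g_1$, but $S$ is an arbitrary commutative ring, so this is not available. Likewise, truncating the $r_i$ to degree $\leq (n-i)g_1$ gives an expression that need not vanish, so that version of the fix also requires a justification you haven't supplied. As written, the ``degree-separation step'' you call the combinatorial crux is genuinely missing.

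The argument in Swanson--Huneke sidesteps all of this. Work in $S' := S[u_1^{\pm 1}, \dots, u_m^{\pm 1}]$ (with $m = d+e$) and $R' := R[u_1^{\pm 1}, \dots, u_m^{\pm 1}]$, and define the $\Z[u^{\pm 1}]$-algebra automorphism $\phi$ of $S'$ by $\phi(s_g) = u^g s_g$ on homogeneous elements. This $\phi$ restricts to an automorphism of $R'$, hence preserves the integral closure of $R'$ in $S'$; and by the separate (easier) fact that integral closure commutes with adjoining Laurent variables, that integral closure is $\overline{R}^S[u^{\pm 1}]$. So for $s = \sum_g s_g$ integral over $R$, the element $\phi(s) = \sum_g u^g s_g$ lies in $\overline{R}^S[u^{\pm 1}]$, and reading off the $u$-coefficients shows every $s_g \in \overline{R}^S$. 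This gives all the homogeneous components at once, with no total order, no degree bounds, and no case analysis on zerodivisors. If you want to salvage the leading-component route you would need to replace the generic-$\omega$ step with an actual mechanism for producing an integral relation whose coefficients obey the degree bound; the automorphism trick is the standard way to engineer exactly that.
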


\subsection{Semigroup rings}
\label{sgsubsec}
\begin{defn}[Monomials and semigroup rings]
    Let $A$ be a commutative domain, and let $T = A[x_1, \hdots, x_n, x_1^{-1}, \hdots, x_n^{-1}]$. A \emph{monomial} is an element of the form $\xs^{\ts} = x_1^{t_1} \hdots x_n^{t_n}$ for some exponent vector $\ts = (t_1, \hdots, t_n) \in \Z^n$. 

    Let $M \subset \Z^n$ be a semigroup with identity. The \emph{\boldmath semigroup ring defined by $M$} is the subring $A[M] \subseteq T$ which is generated as an $A$-algebra by all monomials $\xs^{\ts}$ such that the exponent vector $\ts$ is in $M$.
    
    If $M$ is finitely generated, then $A[M]$ is called an \emph{affine semigroup ring}. If the cone $\R_{\geq 0} M$ is pointed, then $A[M]$ is called a \emph{pointed semigroup ring}.
\end{defn}

\begin{remark}
    Throughout this paper, a “semigroup" means a semigroup with an identity element. Such structures are also referred to as \textit{monoids}. The distinction is not terribly important; if $M$ is a semigroup \textit{without} identity, then $M' = M \cup \{0\}$ is a monoid and $A[M] = A[M']$, since $\xs^{0} = 1 \in A$.
\end{remark}

\begin{remark}
    \label{assocsemigp}
    When $R = A[M]$ is a semigroup ring, the set of monomials in $R$ is isomorphic to $M$ as a semigroup, and a set of monomials generates $R$ as an $A$-algebra if and only if their exponent vectors generate $M$ as a semigroup. I will refer to $M$ as the \emph{semigroup associated to} $R$.

    Note that $A[M]$ has a natural $\Z^n$-grading given by multidegree. The semigroup structure of $M$ captures all the information about the multigraded $A$-algebra structure of $A[M]$, in the sense that isomorphisms of semigroups induce multigraded $A$-algebra isomorphisms and vice versa.
\end{remark}

\begin{prop}
    \label{sgmaxhomog}
    If $M \subseteq \Z^n$ is a pointed semigroup and $(A, \n)$ is a local ring, then the semigroup ring $R = A[M]$ has a unique maximal $\Z^n$-homogeneous ideal $\m = \n R + (\xs^{\ts} : \ts \in M - \{0\})R$.
\end{prop}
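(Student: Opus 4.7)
My plan is to exploit the $\Z^n$-grading on $R = A[M]$, whose degree-$\ts$ component is $A\xs^{\ts}$ for $\ts \in M$ and $0$ otherwise. Every homogeneous element of $R$ has the form $a\xs^{\ts}$ with $a \in A$ and $\ts \in M$, so any $\Z^n$-homogeneous ideal is generated by elements of this shape; the whole argument will reduce to analyzing which such generators can lie in a proper homogeneous ideal.

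First I would verify that $\m$ itself is homogeneous and proper. Homogeneity is immediate from the given list of generators. Properness is where pointedness does the work: I would show that $J := (\xs^{\ts} : \ts \in M - \{0\})R$ is contained in $\bigoplus_{\ts \in M - \{0\}} A\xs^{\ts}$. Indeed, any element of $J$ is a finite sum $\sum_i r_i \xs^{\ts_i}$ with $\ts_i \in M - \{0\}$, and expanding $r_i = \sum_{\us} a_{i,\us} \xs^{\us}$ term by term, each contribution sits in degree $\us + \ts_i$, which cannot equal $0$ because pointedness of $M$ forces $-\ts_i \notin M$ whenever $\ts_i \neq 0$. Consequently $\m \cap R_0 = \n$ and $R/\m \cong A/\n \neq 0$, so $\m$ is proper.

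For uniqueness, I would let $\m'$ be any proper $\Z^n$-homogeneous ideal, with homogeneous generators of the form $a\xs^{\ts}$. If $\ts \neq 0$ then $a\xs^{\ts} \in J \subseteq \m$. If $\ts = 0$ then $a \in \m' \cap A$, and since $\m'$ is proper $a$ cannot be a unit of $R$; but every unit of $A$ is a unit of $R$, so $a$ is not a unit of $A$, forcing $a \in \n \subseteq \m$. Thus every homogeneous generator of $\m'$ lies in $\m$, giving $\m' \subseteq \m$. The main obstacle is really the properness verification in the previous paragraph, which is the only step that uses pointedness of $M$: without it one could have $\ts, -\ts \in M - \{0\}$, making $\xs^{\ts}$ a unit in $R$, whence $J$ would contain $1$ and $\m$ would collapse to all of $R$.
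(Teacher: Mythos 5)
Your argument is correct, and it takes a genuinely different route from the paper's. The paper first handles the special case $M \subseteq \N^n$, where $R$ sits inside the polynomial ring $A[\xs]$ and inherits its homogeneous maximal ideal by restriction, and then reduces the general pointed case to this one via the cited fact (\cite[Cor 7.23]{cca}) that every pointed semigroup embeds in some $\N^n$. You instead argue directly from the $\Z^n$-grading: you isolate exactly where pointedness enters, namely that $-\ts \notin M$ for $\ts \in M \setminus \{0\}$, which forces $J = (\xs^{\ts} : \ts \in M \setminus \{0\})R$ to have zero intersection with $R_0 = A$, so that $\m_0 = \n$ and $\m$ is proper; uniqueness then follows by checking each homogeneous element $a\xs^{\ts}$ of a proper homogeneous ideal separately according to whether $\ts = 0$ (where locality of $A$ puts $a$ in $\n$) or $\ts \neq 0$ (where $a\xs^{\ts} \in J$). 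Your approach avoids invoking the embedding into $\N^n$ and is self-contained; it also makes explicit the role of pointedness, which the paper's reduction hides inside the cited embedding result. The paper's approach is shorter on the page but outsources the real content to the cited corollary. One small expository note: in your uniqueness step you phrase things in terms of "homogeneous generators," but what you actually use (and all you need) is that a homogeneous ideal is spanned by its homogeneous elements, each of which is shown to lie in $\m$; this is fine, just worth stating that way to avoid any implication that a generating set is needed.
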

\begin{proof}
    If $M \subseteq \N^n$, then $R$ is a subring of the polynomial ring $A[\xs]$ and its maximal homogeneous ideal $\m$ is simply the restriction $(\xs)A[\xs] \cap R$. Otherwise, there is an isomorphism $R \to A[M']$ for some semigroup $M' \subset \N^n$, and $\m$ is the preimage of the homogeneous maximal ideal of $A[M']$.
\end{proof}

The goal of this section is to prove Theorem 5.1 for pointed affine semigroup rings, as stated here in Theorem \ref{sgcontent}:

\begin{thm}
    \label{sgcontent}
    Let $R$ be a pointed affine semigroup ring. Then any length $d$ sequence with a minimal prime of height $d$ is a Q-sequence in $R$.
\end{thm}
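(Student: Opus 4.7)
The plan is to exhibit a Noetherian Cohen-Macaulay $R$-module and then invoke Proposition \ref{reg}. The natural candidate is the normalization $R'$ of $R = A[M]$ inside the Laurent polynomial ring $T = A[x_1^{\pm 1}, \hdots, x_n^{\pm 1}]$, taking $A = \Zp$ as the case of interest.

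First, I would apply Theorem \ref{intclosSHthm} (with $d = 0$ and $e = n$) to conclude that $R'$ is $\Z^n$-graded as a subring of $T$. By Remark \ref{homog}, every $\Z^n$-homogeneous element of $T$ is a unit multiple of a monomial, so $R'$ is generated by monomials as an $A$-algebra; say $R' = A[M']$ for some semigroup $M' \subseteq \Z^n$. A direct analysis of integral dependence on monomials identifies $M'$ with the saturation of $M$ in the group it generates (a monomial $\xs^{\ts}$ is integral over $A[M]$ exactly when some positive multiple of $\ts$ lies in $M$). Since $M$ is finitely generated and pointed, Gordan's Lemma ensures $M'$ is likewise finitely generated, so $R'$ is an affine \emph{normal} semigroup ring.

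Second, Proposition \ref{toricHprop1} then gives that $R'$ is a normal Noetherian domain, and Theorem \ref{toricHthm1} yields that $R'$ is Cohen-Macaulay (since $\Zp$ is regular, hence Cohen-Macaulay). Since $R'$ is a finitely generated $A$-algebra and integral over $R$, it is module-finite over $R$, and therefore a finitely generated $R$-module. The Cohen-Macaulay ring structure on $R'$ transfers to the $R$-module structure: at any localization of $R$ at a height $d$ minimal prime of $(\xs)R$, the module $R'$ becomes a module-finite extension of a Noetherian local ring by a Cohen-Macaulay semilocal ring of the same dimension. Proposition \ref{reg} then concludes that every length $d$ sequence in $R$ generating a height $d$ ideal is a Q-sequence.

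The main obstacle is verifying that the normalization really is of the form $A[M']$ with $M'$ a finitely generated semigroup; this depends crucially on both the graded structure coming from Theorem \ref{intclosSHthm} and the pointedness hypothesis feeding Gordan's Lemma. Without pointedness the saturation could fail to be finitely generated, and without grading the normalization might not be generated by monomials at all, breaking the Noetherian Cohen-Macaulay input to Proposition \ref{reg}.
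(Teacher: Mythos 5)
Your strategy matches the paper's: normalize $R$, show the normalization is a Noetherian Cohen-Macaulay $R$-module, and invoke Proposition~\ref{reg}. But there are two points of imprecision worth flagging.

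First, Theorem~\ref{intclosSHthm} tells you that the integral closure $R^T$ of $R$ in $T$ is $\Z^n$-graded; it does not directly say anything about the normalization $\tilde{R} = \text{frac}(R) \cap R^T$, which is a priori smaller. The paper fills this gap with Lemma~\ref{fracgraded}, showing that $\text{frac}(R) \cap T$ is itself $\Z^n$-graded, so that the intersection is too. Relatedly, the condition you state (\textquotedblleft some positive multiple of $\ts$ lies in $M$\textquotedblright) characterizes the saturation of $M$ inside all of $\Z^n$, which is the semigroup of $R^T$; the normalization $\tilde{R}$ corresponds instead to the saturation of $M$ inside the group $\Z M$ it generates, as you in fact say but the stated condition does not deliver. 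For this particular theorem the slippage is harmless---$R^T$ is also a finitely generated, normal, Cohen-Macaulay semigroup ring that is module-finite over $R$, so it would serve equally well as the Noetherian Cohen-Macaulay input to Proposition~\ref{reg}---but the argument needs to be internally consistent about which object it is manipulating.

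Second, your closing paragraph misattributes the role of pointedness. Gordan's Lemma (Proposition~\ref{latticecone}) does not require the cone to be pointed, and the saturation of a finitely generated semigroup is finitely generated regardless. Pointedness is really needed so that Hochster's results apply: Proposition~\ref{toricHprop1} and Theorem~\ref{toricHthm1} are stated for semigroups in $\N^n$, and only pointed semigroups can be embedded there.

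Beyond those corrections, your use of Gordan's Lemma to establish finite generation of the saturated semigroup is a genuinely different route from the paper, which instead invokes excellence: $\Zp$ is excellent, hence $R$ is excellent, hence $\tilde{R}$ is a Noetherian $R$-module, and finite generation of the semigroup falls out. Your approach is more elementary and self-contained on this step, while the paper's is shorter and reuses machinery it needs anyway for the $p$-semigroup case. Both are valid.
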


\begin{remark}
    \label{sgcontentproof}
    By Proposition \ref{reg}, it is enough to find a Noetherian Cohen-Macaulay $R$-module. I claim that the normalization of $R$ is such a module. 
    
    First, I will show that the normalization of $R$ is also a semigroup ring. I will then apply Mel Hochster's theorem on normal semigroup rings to conclude that the normalization is a Noetherian Cohen-Macaulay $R$-module.
\end{remark}

\begin{prop}
    \label{sgnorm}
    Let $T = D[x_1^{\pm 1}, \hdots, x_n^{\pm 1}]$ for some normal domain $D$, and let $R \subseteq T$ be a semigroup ring. Then the normalization $\tilde{R}$ of $R$ is a semigroup ring, which is pointed if and only if $R$ is pointed. If $D$ is excellent and $R$ is affine, then $\tilde{R}$ is affine.
\end{prop}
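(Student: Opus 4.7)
The approach is to apply Theorem \ref{intclosSHthm} to the $\Z^n$-graded inclusion $R \subseteq T$, forcing $\tilde{R}$ to be $\Z^n$-graded, and then to identify $\tilde{R}$ explicitly as the semigroup ring attached to the saturation of the semigroup associated to $R$.

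Setting up the grading: write $R = D[M]$ for a semigroup $M \subseteq \Z^n$, and let $N := M - M \subseteq \Z^n$. Since $D[N]$ is a Laurent polynomial ring in $\mathrm{rank}(N)$ variables over the normal domain $D$, it is itself a normal domain, and since it is a localization of $R$ at the nonzero monomials of $R$, we have $\mathrm{Frac}(R) = \mathrm{Frac}(D[N])$. Any element of $\mathrm{Frac}(R)$ integral over $R$ is integral over $D[N]$, hence lies in $D[N]$ by normality, so $\tilde{R}$ equals the integral closure of $R$ inside $D[N]$. Applying Theorem \ref{intclosSHthm} to the $N$-graded inclusion $R \subseteq D[N]$ (with $G = N \cong \Z^{\mathrm{rank}(N)}$) then gives that $\tilde{R}$ is $N$-graded, so each homogeneous piece $\tilde{R}_{\ts}$ sits inside $D \cdot \xs^{\ts}$.

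The next step is to identify $\tilde{R}$ as a semigroup ring. Let $\bar{M} := \{\ts \in N : k\ts \in M \text{ for some } k \geq 1\}$ denote the saturation of $M$ in $N$; the claim is that $\tilde{R} = D[\bar{M}]$. The inclusion $D[\bar{M}] \subseteq \tilde{R}$ is immediate: if $k\ts \in M$, then $\xs^{\ts}$ is a root of the monic polynomial $X^k - \xs^{k\ts} \in R[X]$. Conversely, suppose $0 \neq a \xs^{\ts} \in \tilde{R}_{\ts}$ and pick an integral equation $(a\xs^{\ts})^m + \sum_{i=0}^{m-1} r_i (a\xs^{\ts})^i = 0$ with $r_i \in R$. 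Projecting each $r_i$ onto its multidegree-$(m-i)\ts$ component writes it as $b_i \xs^{(m-i)\ts}$ with $b_i \in D$ and $b_i = 0$ unless $(m-i)\ts \in M$. Taking the multidegree-$m\ts$ part of the integral equation and canceling the unit $\xs^{m\ts}$ in $D[N]$ yields $a^m + \sum_{i=0}^{m-1} b_i a^i = 0$. Since $a \neq 0$ and $D$ is a domain, some $b_i$ is nonzero, forcing $(m-i)\ts \in M$ with $1 \leq m-i \leq m$, i.e., $\ts \in \bar{M}$. The first inclusion then also puts $\xs^{\ts} \in \tilde{R}$, so $\tilde{R}_{\ts} = D \cdot \xs^{\ts}$ and $\tilde{R} = D[\bar{M}]$.

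Pointedness and finite generation now follow cleanly. Since $M \subseteq \bar{M} \subseteq \R_{\geq 0} M \cap N$, the real cones generated by $M$ and by $\bar{M}$ coincide, so one is pointed exactly when the other is. If $M$ is finitely generated, then $N$ is a finitely generated free abelian group and $\R_{\geq 0} M$ is a rational polyhedral cone in $N \otimes_{\Z} \R$; Gordan's lemma then gives that $\bar{M} = \R_{\geq 0} M \cap N$ is finitely generated, making $\tilde{R} = D[\bar{M}]$ a finitely generated $D$-algebra. The main obstacle is the downward inclusion in the identification of $\tilde{R}$: a priori one knows only that \emph{some} nonzero $D$-multiple of $\xs^{\ts}$ is integral over $R$, not the bare monomial itself. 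The projection-to-homogeneous-components maneuver, together with the domain hypothesis on $D$, is what lets one extract $\xs^{\ts}$ and place $\ts$ in the saturation.
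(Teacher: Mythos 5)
Your proof is correct and takes a genuinely different, and in places cleaner, route than the paper's. Rather than working inside $T$ and intersecting the integral closure $R^T$ with $\text{frac}(R)$ --- which is what forces the paper to invoke the auxiliary Lemma \ref{fracgraded} --- you pass to the group ring $D[N]$ with $N = M - M$; since $D[N]$ is a normal domain obtained from $R$ by inverting monomials, $\text{frac}(R) = \text{frac}(D[N])$, so $\tilde{R}$ literally \emph{is} the integral closure of $R$ in $D[N]$ and a single application of Theorem \ref{intclosSHthm} yields the grading with no fraction-field lemma. Your identification $\tilde{R} = D[\bar M]$ by projecting the integral equation onto multidegree $m\underline{t}$ and cancelling the unit $\underline{x}^{m\underline{t}}$ is also more self-contained than the paper's detour through $L[M]$ for $L = \text{frac}(D)$, and it makes visible exactly where the domain hypothesis on $D$ is used. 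Most substantially, you obtain affineness from Gordon's lemma rather than from excellence of $D$, so your argument in fact \emph{removes} the excellence hypothesis from the third assertion.

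The one step you assert without proof is the equality $\bar M = \R_{\geq 0} M \cap N$ when $M$ is finitely generated. You establish $\bar M \subseteq \R_{\geq 0} M \cap N$, but the reverse inclusion is needed and is not a consequence of Gordon's lemma: Gordon gives only that $\R_{\geq 0} M \cap N$ is finitely generated, and a sub-semigroup of a finitely generated semigroup need not itself be finitely generated, so containment alone does not finish the job. The reverse inclusion is the standard saturation-equals-cone-meet-lattice fact: given $\underline{t} \in N$ with $\underline{t} = \sum a_i m_i$, $a_i \in \R_{\geq 0}$ and $m_i$ generating $M$, rationality of the data yields a nonnegative \emph{rational} solution, and clearing denominators gives $k\underline{t} \in M$. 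This is essentially \cite[Prop 7.25]{cca}, which the paper cites elsewhere; you should either cite it or include the brief rationality argument.
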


\begin{proof}
    A Laurent polynomial ring over a normal domain is a normal domain, so the normalization $\tilde{R}$ of $R$ is contained in $T$. In fact, $\tilde{R} = \text{frac}(R) \cap R^T$, where $R^T$ is the integral closure of $R$ in $T$. Note that $R^T$ is $\Z^n$-graded, by Theorem \ref{intclosSHthm}. The intersection $\text{frac}(R) \cap T$ is therefore $\Z^n$-graded as well\textemdash indeed, it is not hard to show the following lemma:
    \begin{lem}
    \label{fracgraded}
        Let $T = A[x_1^{\pm 1}, \hdots, x_n^{\pm 1}] $, and let  $R \subseteq T$ be a $\Z^n$\babelhyphen{nobreak}graded $A$-subalgebra, where $A$ is a domain. Then $\text{frac}(R) \cap T$ is also $\Z^n$-graded.
    \end{lem}
     The intersection of two compatibly $\Z^n$-graded rings is $\Z^n$-graded. Therefore $\tilde{R}$ is generated as a $D$-algebra by homogeneous elements.

    Let $c v$ be a general homogeneous element of $\tilde{R}$, where  $c \in D$ and $v$ is a monomial. I claim that $v \in \tilde{R}$. If $cv$ is integral over $R = D[M]$, then $v$ is integral over $L[M]$, where $L$ is the fraction field of $D$. But then some multiple of the exponent vector of $v$ lies in $M$; in other words, there is a $k \geq 1$ for which $v^k \in R$. Thus $v$ is integral over $R$. Moreover, if $cv$ is in the fraction field of $R$, then clearly $v = \frac{cv}{c} \in \text{frac}(R)$. Therefore $\tilde{R}= D[\tilde{M}]$ is a semigroup ring.

    It is not hard to show that the semigroups $M$ and $\tilde{M}$ generate the same real cone; so, $R$ is pointed if and only if $\tilde{R}$ is pointed.

    Suppose $D$ is excellent and $R$ is affine. Then $R$ is excellent as well, and therefore its normalization $\tilde{R}$ is a Noetherian $R$-module \cite[33.H Thm 78]{matsumura}. In particular, $\tilde{M}$ must be finitely generated as a semigroup, so $\tilde{R}$ is affine.
\end{proof}

\begin{thm}
    \label{semigroup1}
    Let $R = \Zp[M]$ be a pointed affine semigroup ring. Then the normalization $\tilde{R}$ of $R$ is a Noetherian Cohen-Macaulay $R$-module.
\end{thm}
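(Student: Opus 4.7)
The plan is to invoke Proposition \ref{sgnorm} together with Hochster's Theorem \ref{toricHthm1} on Cohen-Macaulayness of normal semigroup rings. Since $\Zp$ is a normal excellent domain (in fact a DVR) and $R = \Zp[M]$ is a pointed affine semigroup ring, Proposition \ref{sgnorm} applies and yields that the normalization has the form $\tilde{R} = \Zp[\tilde{M}]$ for some finitely generated pointed semigroup $\tilde{M}$, and moreover $\tilde{R}$ is Noetherian as an $R$-module. This takes care of the ``Noetherian'' half of the conclusion.

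Next, I would verify that $\tilde{M}$ is a normal semigroup. This is forced by the fact that $\tilde{R}$ is integrally closed in its fraction field: if $k \vec{v}$ lies in $\tilde{M}$ for some positive integer $k$ and some $\vec{v}$ in the group generated by $\tilde{M}$, then the monomial $\xs^{\vec{v}}$ satisfies the monic polynomial $y^k - \xs^{k\vec{v}}$ over $\tilde{R}$, hence already lies in $\tilde{R}$, forcing $\vec{v} \in \tilde{M}$.

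With $\tilde{M}$ established as a normal, finitely generated pointed semigroup, I would apply Theorem \ref{toricHthm1} with coefficient ring $A = \Zp$, which is Cohen-Macaulay (being a regular local ring). This gives that $\tilde{R} = \Zp[\tilde{M}]$ is Cohen-Macaulay as a ring. To upgrade ring-CM to module-CM over $R$, I would localize at any maximal ideal $\m$ of $R$, obtaining the module-finite extension $R_\m \hookrightarrow \tilde{R}_\m$. Here $\tilde{R}_\m$ is a semi-local Noetherian ring whose maximal ideals lie over $\m R_\m$; each localization of $\tilde{R}_\m$ at one of these maximals is a Cohen-Macaulay local ring of dimension $\dim R_\m$ (by going-up and incomparability for the integral extension). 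A standard depth-dimension accounting then yields $\mathrm{depth}_{R_\m} \tilde{R}_\m = \dim R_\m$, so $\tilde{R}$ is Cohen-Macaulay as an $R$-module at $\m$, hence globally.

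The main obstacle is purely bookkeeping in this final step of transferring Cohen-Macaulayness from the ring $\tilde{R}$ to the module $\tilde{R}$ over $R$; nothing deep is required once the finite extension and dimension equalities are in place. The verification that $\tilde{M}$ is normal is short but essential, since Theorem \ref{toricHthm1} requires a genuinely normal semigroup as input.
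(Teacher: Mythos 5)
Your proposal matches the paper's proof in structure and substance: apply Proposition~\ref{sgnorm} to get that $\tilde{R} = \Zp[\tilde{M}]$ is a pointed affine semigroup ring and a Noetherian $R$-module, apply Theorem~\ref{toricHthm1} to conclude $\tilde{R}$ is Cohen-Macaulay as a ring, and then use module-finiteness to upgrade to Cohen-Macaulayness as an $R$-module. You spell out two points the paper leaves implicit (the verification that $\tilde M$ is saturated, hence normal, and the depth-dimension bookkeeping for the module-finite extension), but these are elaborations of the same argument rather than a different route.
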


\begin{proof}
    The coefficient ring $\Zp$ is an excellent Cohen-Macaulay normal domain. By Proposition \ref{sgnorm}, the normalization $\tilde{R}$ of $R$ is also a pointed affine semigroup ring. It follows from Theorem \ref{toricHthm1} that $\tilde{R} = \Zp[\tilde{M}]$ is a Cohen-Macaulay ring. Since $\tilde{R}$ is a module finite extension of $R$, it is therefore also Cohen-Macaulay as an $R$-module. 
\end{proof}

As discussed in Remark \ref{sgcontentproof}, this finishes the proof of Theorem \ref{sgcontent}. Armed with this result for semigroup rings, we can proceed to the general case.

\subsection{\texorpdfstring{$p$-semigroup rings}{p-semigroup rings}}
\label{psgsubsec}
In this section, let $T = \Q[x_1^{\pm 1}, \hdots, x_n^{\pm 1}]$ be the Laurent polynomial ring in $n$ variables. This ring can be thought of as a $\Z^n$-graded $\Zp$ algebra, where the homogeneous elements are $p$-monomials up to multiplication by a unit (see Remark \ref{homog}). All discussions from Section \ref{pmonomsec} apply in this setting.

\begin{defn}[$p$-semigroup rings]
    \label{psemigroupdef}
    A $\Zp$-subalgebra $R \subseteq T$ is called a \emph{\boldmath $p$-semigroup ring} if it is generated as a $\Zp$-algebra by $p$-monomials. Equivalently, $R$ is a $p$-semigroup ring if it has a $\Z^n$-grading given by multidegree in the variables $x_1, \hdots, x_n$.

    If $M \subset \Z^{n+1}$ is a semigroup containing $\enaught$, let $\Zp[\overline{M}]$ denote the $p$-semigroup ring generated by $p$-monomials $p^{t_0}x_1^{t_1}\hdots x_n^{t_n}$ for each $\ts \in M$. A $p$-semigroup ring $R = \Zp[\overline{M}]$ is called \emph{affine} if $M$ is a finitely generated semigroup, and \emph{pointed} if $\R_{\geq 0}M$ is a pointed cone.
\end{defn}

\begin{remark}
    \label{psgphi}
    With this notation, observe that the evaluation map $\phi(x_0) = p$ gives a surjection $\Zp[M] \to \Zp[\overline{M}]$ with kernel $(x_0 - p)\Zp[M]$ (see Thm \ref{phi}(c)). It follows from Proposition \ref{sgmaxhomog} that, for a pointed semigroup $M$ containing $\enaught$, the $p$-semigroup ring $\Zp[\overline{M}]$ has a unique maximal homogeneous ideal generated by its non-trivial $p$-monomials.
\end{remark}

The goal of this section is to prove that Conjecture \ref{sopcont} holds for pointed affine $p$-semigroup rings. The strategy will be the same as it was for semigroup rings: I will first show that the normalization of a pointed affine $p$-semigroup ring is still a pointed affine $p$-semigroup ring, and then I will argue that the normalization is Noetherian and Cohen-Macaulay.

The following two propositions show that $p$-semigroup rings behave well under normalization, analogously to their semigroup ring counterparts.

\begin{prop}
\label{intpmonom}
   Let $R \subseteq T$ be a $p$-semigroup ring. If $v$ is $p$-monomial, then v is integral over $R$ iff $v^k \in R$ for some $k \geq 1$.
\end{prop}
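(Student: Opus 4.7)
The plan is to handle the two implications separately. The reverse direction is trivial: if $v^k \in R$, then $v$ is a root of the monic polynomial $y^k - v^k \in R[y]$. For the forward direction, I would suppose $v = p^{t_0}\xs^{\ts}$ satisfies an integral equation $v^m + a_{m-1}v^{m-1} + \cdots + a_0 = 0$ with $a_j \in R$, and aim to produce some $k \geq 1$ with $v^k \in R$.

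The key tool is the $\Z^n$-grading that $R$ inherits from $T$. Since $v$ is $\Z^n$-homogeneous of multidegree $\ts$, I would extract the multidegree-$m\ts$ component of the integral equation, reducing to the case where each $a_j$ is $\Z^n$-homogeneous of multidegree $(m-j)\ts$. Every such homogeneous element of $T$ is a $\Q$-multiple of $\xs^{(m-j)\ts}$; when $a_j \neq 0$, I would write $a_j = c_j p^{e_j}\xs^{(m-j)\ts}$ with $c_j \in \Zp^\times$ and $e_j \in \Z$. Because $c_j^{-1} \in \Zp \subseteq R$, the pure $p$-monomial $p^{e_j}\xs^{(m-j)\ts} = c_j^{-1}a_j$ must also lie in $R$, which is the form of the coefficient I will need.

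Cancelling the common $\xs^{m\ts}$ from the integral equation (valid since $T$ is a domain) yields a scalar identity in $\Q$,
\[
    p^{mt_0} + \sum_{j<m,\; a_j \neq 0} c_j\, p^{e_j + jt_0} = 0.
\]
The leading term has $p$-adic valuation exactly $mt_0$, so at least one summand on the right must also have $p$-adic valuation $\leq mt_0$: there exists $j<m$ with $e_j + jt_0 \leq mt_0$, equivalently $e_j \leq (m-j)t_0$. Then
\[
    v^{m-j} = p^{(m-j)t_0}\xs^{(m-j)\ts} = p^{(m-j)t_0 - e_j} \cdot \bigl(p^{e_j}\xs^{(m-j)\ts}\bigr) \in R,
\]
since the first factor is a non-negative power of $p$ and the second factor was shown to be in $R$. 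Taking $k = m - j \geq 1$ finishes the proof.

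The step I expect to be the main obstacle is the $p$-adic valuation comparison: it is essential that each $c_j$ is a \emph{unit} in $\Zp$ (and not merely in $\Q$), so that rewriting the equation in pure powers of $p$ actually forces some $e_j$ to be sufficiently small. The $\Z^n$-graded reduction is what strips the $a_j$ down to the clean form $c_jp^{e_j}\xs^{(m-j)\ts}$ that makes this valuation argument available.
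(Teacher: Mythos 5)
Your proof is correct and follows essentially the same approach as the paper: project the integral equation to the $\Z^n$-homogeneous component of $v^m$, express each coefficient as a unit times a $p$-monomial, cancel the common monomial to obtain an identity among powers of $p$, and then exploit the $p$-adic valuation to find an exponent $k$ with $v^k \in R$. The paper phrases the valuation step as a proof by contradiction (assuming all $e_i > it$ and concluding $-1 \in p\Zp$) while you argue directly, but these are equivalent.
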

\begin{proof}
     The reverse implication is clear. If $v$ is integral over $R$, write
     \[
           v^m + f_1 v^{m-1} + \hdots + f_m = 0
     \]
     for some $f_i \in R$. Since $v$ is a $p$-monomial, the term $v^m$ is homogeneous. Write $v = p^t u$, where $u$ is some monomial in the variables $x_1, \hdots, x_n$. By projecting to the homogeneous component of $v^m$, we may assume that each coefficient $f_i \in R$ has the form $f_i = c_i p^{e_i}u^i$, where $c_i$ is either zero or a unit.
     
     Suppose that there is some $1 \leq k \leq m$ for which $c_k \neq 0$ and $kt \geq e_k$. Then
     \[
         v^k = p^{kt-e_k} \cdot \frac{1}{c_k}f_k \in R.
     \]
     So assume that this is not the case. Then, dividing out the monomial $u^m$ from the integral equation, we get
     \begin{align*}
         p^{mt} + c_1p^{e_1 + (m-1)t} + \hdots + c_m p^{e_m} &= 0\\
         1 + c_1p^{e_1 - t} + \hdots + c_mp^{e_m - mt} &= 0 \quad \text{where $c_i = 0$ or } e_i - it  > 0 \; \forall i\\
         c_1p^{e_1 - t} + \hdots + c_mp^{e_m - mt} &= -1 \notin p\Zp
     \end{align*}
     But whenever $c_i \neq 0$, we have by assumption that $it < e_i$, and therefore $c_ip^{e_i - it} \in p\Zp$ for every $i$. This is a contradiction.
\end{proof}

\begin{prop}
    \label{pmonomfracs}
    Let $R \subseteq T$ be a $p$-semigroup ring. Then a $p$-monomial $v$ is contained in the fraction field of $R$ if and only if $v = u/w$ for some $p$-monomials $u, w \in R$.
\end{prop}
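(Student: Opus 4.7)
The reverse direction is immediate. For the forward direction, suppose $v$ is a $p$-monomial of multidegree $\vs \in \Z^n$ with $v \in \text{frac}(R)$, and write $v = a/b$ with $a, b \in R$ and $b \neq 0$. The plan is to exploit the $\Z^n$-grading on $R$ inherited from $T$. Decomposing $a = \sum_\alpha a_\alpha$ and $b = \sum_\alpha b_\alpha$ into multigraded components and comparing components in $vb = a$ (which is legal because $v$ is itself homogeneous of degree $\vs$) yields $v b_\alpha = a_{\vs+\alpha}$ for every $\alpha$. Choosing any $\alpha$ with $b_\alpha \neq 0$, one reduces to the case $v = a_{\vs+\alpha}/b_\alpha$, a ratio of nonzero $\Z^n$-homogeneous elements of $R$.

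The heart of the argument is then the following lemma: every nonzero $\Z^n$-homogeneous element $h \in R$ of multidegree $\beta$ can be written as $h = u \cdot M$ for some unit $u \in \Zp^\times$ and some $p$-monomial $M \in R$. To prove it, write $h = \lambda x^\beta$ with $\lambda \in \Q^\times$. Since $R$ is $\Zp$-spanned by its $p$-monomials and $h$ is homogeneous, $h$ expands as $h = \sum_i c_i \, p^{e_i} x^\beta$ with $c_i \in \Zp$ and each $p^{e_i} x^\beta$ a $p$-monomial in $R$. The identity $\lambda = \sum_i c_i p^{e_i}$ forces $v_p(\lambda) \geq e_0 := \min_i e_i$, so that $p^{v_p(\lambda)} x^\beta = p^{v_p(\lambda) - e_0} \cdot (p^{e_0} x^\beta) \in R$. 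Factoring $\lambda = p^{v_p(\lambda)} u$ with $u \in \Zp^\times$ then gives $h = u \cdot (p^{v_p(\lambda)} x^\beta)$, as desired.

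Applying the lemma to $a_{\vs+\alpha} = u_1 U$ and $b_\alpha = u_2 W$, where $u_1, u_2 \in \Zp^\times$ and $U, W$ are $p$-monomials in $R$, gives $v = (u_1/u_2)(U/W)$. But $v$ and $U/W$ are both $p$-monomials of multidegree $\vs$, so their ratio $u_1/u_2 \in \Zp^\times$ must simultaneously be a power of $p$; the only such element of $\Q$ is $1$, so $u_1 = u_2$ and $v = U/W$ as required.

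The main technical step is the lemma: one must reconcile the $p$-adic valuation of the composite element $\lambda$ with the valuations of the $p$-monomial summands to guarantee that $p^{v_p(\lambda)} x^\beta$ actually lies in $R$ (and not just in $\text{frac}(R)$). Once that is in hand, the rest of the proof is bookkeeping with the $\Z^n$-grading.
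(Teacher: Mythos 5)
Your proof is correct and takes essentially the same route as the paper's: reduce via the $\Z^n$-grading to a ratio of two nonzero homogeneous elements of $R$, factor each as a $\Zp^\times$-unit times a $p$-monomial lying in $R$, and match the unit parts by uniqueness of such factorizations. One simplification worth noting in your lemma: once you write $h = \lambda x^\beta = u\,p^{v_p(\lambda)}x^\beta$ with $u \in \Zp^\times$, the containment $p^{v_p(\lambda)}x^\beta = u^{-1}h \in R$ is immediate from $\Zp^\times \subset R^\times$, so the expansion of $h$ over a $p$-monomial spanning set and the minimum-of-exponents estimate can be dropped entirely.
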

\begin{proof}
    The backward implication is clear. Suppose $v \in \text{frac}(R)$; let $f, g \in R$ be nonzero Laurent polynomials such that $v = f/g$, or equivalently, $vg = f$. Choose any (nonzero) homogeneous component $cw$ of $g$, where $c \in \Zp^{\times}$ is a unit and $w$ is a $p$-monomial. Let $bu$ be the projection of $f$ to the homogeneous component of $v \cdot cw$, where $b \in \Zp^{\times}$ is a unit and $u$ is a $p$-monomial. Since $R$ is $\Z^n$-graded, we have $cw, bu \in R$ and
    \begin{align*}
        v \cdot cw = bu & &
        v = \frac{b}{c} \cdot \frac{u}{w},
    \end{align*}
    where $u/w$ is a $p$-monomial and $b/c$ is a unit. The expression of a homogeneous element as a unit times a $p$-monomial is unique, and $v$ is a homogeneous element with a coefficient of $1$. Therefore $b = c$ and in fact $v = u/w$, where $u,w \in R$ are $p$-monomials.
\end{proof}

\begin{thm}
    \label{psgnorm}
    Let $R \subseteq T$ be a pointed affine $p$-semigroup ring. Then the normalization $\tilde{R}$ of $R$ is also a pointed affine $p$-semigroup ring.
\end{thm}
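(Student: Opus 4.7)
The plan is to mirror the semigroup ring argument of Proposition \ref{sgnorm}: first show that $\tilde R$ is a $\Z^n$-graded subring of $T$ (hence a $p$-semigroup ring), and then that it is pointed and affine. Write $R = \Zp[\overline M]$ for some pointed finitely generated semigroup $M \subseteq \Z^{n+1}$ containing $\enaught$. Since $T = \Q[x_1^{\pm 1}, \ldots, x_n^{\pm 1}]$ is a Laurent polynomial ring over a field, it is a normal domain, so every element of $\text{frac}(R) \subseteq \text{frac}(T)$ that is integral over $R$ already lies in $T$. Thus $\tilde R = R^T \cap \text{frac}(R) \subseteq T$, where $R^T$ denotes the integral closure of $R$ in $T$.

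The core technical step is to show $\tilde R$ is $\Z^n$-graded. Theorem \ref{intclosSHthm} immediately gives that $R^T$ is $\Z^n$-graded, so it suffices to show $\text{frac}(R) \cap T$ is $\Z^n$-graded. Lemma \ref{fracgraded} does not apply directly, because $R$ is only a $\Zp$-subalgebra of $T$ rather than a $\Q$-subalgebra. The workaround is to pass to the enlarged ring $R_{\Q} := R[p^{-1}] \subseteq T$, which is a $\Z^n$-graded $\Q$-subalgebra of $T = \Q[x_1^{\pm 1}, \ldots, x_n^{\pm 1}]$ and satisfies $\text{frac}(R_{\Q}) = \text{frac}(R)$ (since $p \in R$ is already invertible in $\text{frac}(R)$). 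Lemma \ref{fracgraded} now applies to $R_{\Q} \subseteq T$, so $\text{frac}(R) \cap T = \text{frac}(R_{\Q}) \cap T$ is $\Z^n$-graded; therefore so is $\tilde R$.

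Once $\tilde R$ is $\Z^n$-graded, its $\Zp$-algebra generators can be chosen to be $p$-monomials: every nonzero $\Z^n$-homogeneous element of $T$ of multidegree $\ts$ has the form $u \cdot p^e x^{\ts}$ for a unique $u \in \Zp^{\times}$ and $e \in \Z$, via the factorization $\Q^{\times} = \Zp^{\times} \cdot p^{\Z}$. By Propositions \ref{intpmonom} and \ref{pmonomfracs}, a $p$-monomial $p^{t_0} x^{\ts}$ lies in $\tilde R$ precisely when $(t_0, \ts)$ belongs to $\tilde M := \R_{\geq 0} M \cap \Z\langle M \rangle$, the saturation of $M$; hence $\tilde R = \Zp[\overline{\tilde M}]$. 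Pointedness follows from $\R_{\geq 0} \tilde M = \R_{\geq 0} M$, and affineness follows because $R$ is a finitely generated algebra over the excellent ring $\Zp$, hence itself excellent, so $\tilde R$ is module-finite over $R$ by \cite[33.H Thm 78]{matsumura}; this also forces $\tilde M$ to be finitely generated. The main obstacle is the Lemma \ref{fracgraded} mismatch noted above; once the $R \mapsto R[p^{-1}]$ trick is identified, the remainder of the proof proceeds in close parallel with the semigroup ring case.
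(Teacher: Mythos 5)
Your proof is correct and follows essentially the same route as the paper: show $\tilde R$ is $\Z^n$-graded via Theorem \ref{intclosSHthm} and Lemma \ref{fracgraded}, conclude it is generated by $p$-monomials, get pointedness from Proposition \ref{intpmonom}, and get affineness from excellence of $\Zp$ and module-finiteness of normalization. The one place you genuinely add something is your observation that Lemma \ref{fracgraded} as stated does not literally apply: with $T = \Q[x_1^{\pm1},\dots,x_n^{\pm1}]$ the lemma wants an $A$-subalgebra for $A = \Q$, but a $p$-semigroup ring is only a $\Zp$-subalgebra. The paper cites the lemma directly without comment, so you have caught a real (if small) hypothesis mismatch, and your fix — passing to $R[p^{-1}]$, which is a $\Z^n$-graded $\Q$-subalgebra with the same fraction field — is clean and correct. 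It is worth noting, though, that the lemma in fact holds for any $\Z^n$-graded subring of $T$: pick a total order on $\Z^n$ compatible with addition, and if $h = f/g \in \mathrm{frac}(R)\cap T$, then the minimal-degree component of $h$ equals $f_{\vs_0}/g_{\us_0}$ for appropriate homogeneous components of $f$ and $g$, hence lies in $\mathrm{frac}(R)$; subtract and induct on the number of nonzero components. So the $A$-subalgebra hypothesis in the lemma is inessential, and the paper's direct application is also defensible — but your $R[p^{-1}]$ workaround is the safer reading of the lemma as written.
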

\begin{proof}
    Let $R^T$ be the integral closure of $R$ in $T$. Then $R^T$ is $\Z^n$-graded by Theorem \ref{intclosSHthm}, and the normalization $\tilde{R}$ of $R$ is the intersection of $R^T$ with the fraction field of $R$. It follows from Lemma \ref{fracgraded} that $\tilde{R} = R^T \cap \text{frac}(R)$ is also $\Z^n$-graded, and thus generated by $p$-monomials.

    By Proposition \ref{intpmonom}, we get by the same argument used in Proposition \ref{sgnorm} that $R$ is pointed if and only if $\tilde{R}$ is pointed. Finally, $\tilde{R}$ is a Noetherian $R$-module, since $R$ is excellent \cite[33.H Thm 78]{matsumura}. Thus there is some finite set of $p$-monomials generating $\tilde{R}$ as a $\Zp$\babelhyphen{nobreak}algebra; in other words, $\tilde{R}$ is affine.
\end{proof}

\begin{cor}
\label{phitilde}
Let $M \subseteq \Z^{n+1}$ be a finitely generated pointed semigroup containing $\enaught$. Let $S = \Zp[M]$ and $R = \Zp[\overline{M}]$ with normalizations $\tilde{S}$ and $\tilde{R}$, respectively. Then the evaluation map $\phi$ induces a surjection $\tilde{S} \to \tilde{R}$, where the kernel is $(x_0 - p)\tilde{S}$.
\end{cor}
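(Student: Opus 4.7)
The plan is to identify $\tilde{R}$ as the $p$-semigroup ring $\Zp[\overline{\tilde{M}}]$, where $\tilde{M}$ is the normalization of $M$, and then apply Theorem \ref{phi}(c) to the pointed semigroup $\tilde{M}$ directly. By Proposition \ref{sgnorm} applied to $S = \Zp[M]$ over the excellent normal ring $\Zp$, we already know that $\tilde{S} = \Zp[\tilde{M}]$ for some pointed finitely-generated semigroup $\tilde{M} \supseteq M$ still containing $\enaught$. Moreover $\tilde{M} \subseteq \Z M$, so every element of $\tilde{M}$ can be written as a difference of two elements of $M$.

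Next I claim $\tilde{R} = \Zp[\overline{\tilde{M}}]$. By Theorem \ref{psgnorm}, $\tilde{R}$ is a pointed affine $p$-semigroup ring, so it has the form $\Zp[\overline{N}]$ for some pointed finitely-generated semigroup $N \subseteq \Z^{n+1}$ containing $\enaught$. For the inclusion $\tilde{M} \subseteq N$: given $(a_0,\as) \in \tilde{M}$, there is some $k \geq 1$ with $k(a_0,\as) \in M$, so the $p$-monomial $v := p^{a_0}\xs^{\as}$ satisfies $v^k \in R$ and is thus integral over $R$; since $(a_0,\as) \in \Z M$ can be written as a difference of two elements of $M$, we may realize $v$ as a quotient of two $p$-monomials in $R$, placing $v \in \text{frac}(R)$. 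Hence $v \in \tilde{R}$ and $(a_0,\as) \in N$. For the reverse inclusion $N \subseteq \tilde{M}$: any $(a_0,\as) \in N$ gives a $p$-monomial $v = p^{a_0}\xs^{\as} \in \tilde{R}$ integral over $R$, so by Proposition \ref{intpmonom}, $v^k \in R$ for some $k \geq 1$; Lemma \ref{pmonominclusion}(2) then forces $k(a_0,\as) \in M$, i.e., $(a_0,\as) \in \tilde{M}$.

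With $\tilde{S} = \Zp[\tilde{M}]$ and $\tilde{R} = \Zp[\overline{\tilde{M}}]$ both expressed in terms of the same pointed finitely-generated semigroup $\tilde{M}$ containing $\enaught$, Theorem \ref{phi}(c) applied to $\tilde{M}$ yields precisely the desired surjection $\phi: \tilde{S} \to \tilde{R}$ with kernel $(x_0 - p)\tilde{S}$. The main obstacle is the structural identification $\tilde{R} = \Zp[\overline{\tilde{M}}]$ — in particular verifying that the exponents of $p$-monomial generators of $\tilde{R}$ match the exponents of monomial generators of $\tilde{S}$ — but once that correspondence is pinned down, the kernel description is immediate from previously established results.
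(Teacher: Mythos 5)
Your proposal is correct and follows essentially the same route as the paper's proof, just spelled out in more detail: identify $\tilde{S} = \Zp[\tilde{M}]$ via Proposition \ref{sgnorm}, identify $\tilde{R}$ as a $p$-semigroup ring via Theorem \ref{psgnorm}, verify the $p$-monomials of $\tilde{R}$ match the monomials of $\tilde{S}$, then invoke Theorem \ref{phi}(3). The paper's one-sentence proof does exactly this, citing Propositions \ref{intpmonom} and \ref{pmonomfracs} for the correspondence.

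One small point to tighten in the reverse inclusion $N \subseteq \tilde{M}$: from Proposition \ref{intpmonom} and Lemma \ref{pmonominclusion}(2) you correctly get $k(a_0,\as) \in M$, but concluding \enquote{i.e.\ $(a_0,\as) \in \tilde{M}$} also requires $(a_0,\as) \in \Z M$, since $\tilde{M}$ is the saturation of $M$ inside $\Z M$, not inside all of $\Z^{n+1}$. This is where Proposition \ref{pmonomfracs} (which the paper cites and you do not) comes in: since $v \in \tilde{R} \subseteq \operatorname{frac}(R)$, write $v = u/w$ with $u, w$ $p$-monomials in $R$; their exponent vectors lie in $M$ (using Lemma \ref{pmonominclusion}(2) and $\enaught \in M$), so $(a_0,\as) \in \Z M$. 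With that added, the argument is airtight.
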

\begin{proof}
    By Theorems \ref{sgnorm} and \ref{psgnorm}, we have that $\tilde{S}$ is an affine semigroup ring and $\tilde{R}$ is an affine $p$-semigroup ring. Moreover, Propositions \ref{intpmonom} and \ref{pmonomfracs} show that the $p$-monomials in $\tilde{R}$ are in bijective correspondence with the monomials in $\tilde{S}$, and thus $\phi$ maps generators of $\tilde{S}$ to generators of $\tilde{R}$. The statement about the kernel follows directly from Theorem \ref{phi} (3).
\end{proof}

\begin{thm}
\label{psemigroup}
    Let $R \subseteq T$ be a pointed affine $p$-semigroup ring. Then the normalization $\tilde{R}$ of $R$ is a Noetherian Cohen-Macaulay $R$-module.
\end{thm}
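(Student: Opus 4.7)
The plan is to combine Corollary \ref{phitilde} with the semigroup-ring case (Theorem \ref{semigroup1}) to realize $\tilde{R}$ as a hypersurface quotient of the Cohen-Macaulay ring $\tilde{S}$, so that the Cohen-Macaulay property transfers through a single regular element.

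First I would write $R = \Zp[\overline{M}]$ for some finitely generated pointed semigroup $M \subseteq \Z^{n+1}$ containing $\enaught$ (possible by Definition \ref{psemigroupdef}), and set $S = \Zp[M]$. Then $S$ is a pointed affine semigroup ring, so Corollary \ref{phitilde} yields an isomorphism $\tilde{R} \cong \tilde{S}/(x_0 - p)\tilde{S}$, where $\tilde{S}$ denotes the normalization of $S$.

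Next I would observe that the proof of Theorem \ref{semigroup1} actually delivers more than its statement: via Hochster's Theorem \ref{toricHthm1} applied to the normal semigroup $\tilde{M}$, the ring $\tilde{S} = \Zp[\tilde{M}]$ is a Noetherian Cohen-Macaulay \emph{ring}. Since $\tilde{S}$ sits inside the Laurent polynomial ring $\Zp[x_0^{\pm 1}, x_1^{\pm 1}, \hdots, x_n^{\pm 1}]$, which is a domain, $\tilde{S}$ is itself a domain. In particular, the element $x_0 - p$ is nonzero in $\tilde{S}$, hence a non-zerodivisor. A quotient of a Noetherian Cohen-Macaulay ring by a non-zerodivisor is again Noetherian and Cohen-Macaulay (localize at any prime containing $x_0 - p$; both depth and dimension drop by exactly one). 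Therefore $\tilde{R} = \tilde{S}/(x_0 - p)\tilde{S}$ is a Noetherian Cohen-Macaulay ring.

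Finally, Theorem \ref{psgnorm} already records that $\tilde{R}$ is module-finite over $R$ (using that $R$ is excellent), so Cohen-Macaulayness as a ring transfers to Cohen-Macaulayness as an $R$-module, and $\tilde{R}$ is Noetherian as an $R$-module as well. I do not anticipate a serious technical obstacle; most of the difficulty was already absorbed into Corollary \ref{phitilde}, which reduces the $p$-semigroup setting to the semigroup setting up to quotienting by $x_0 - p$. The only point that deserves explicit mention is that the proof of Theorem \ref{semigroup1} actually shows $\tilde{S}$ to be a Cohen-Macaulay ring (not merely a Cohen-Macaulay $S$-module), which is exactly what Hochster's theorem supplies.
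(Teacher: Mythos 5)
Your proposal matches the paper's proof essentially step for step: write $R = \Zp[\overline{M}]$, set $S = \Zp[M]$, use Corollary~\ref{phitilde} to present $\tilde{R}$ as $\tilde{S}/(x_0-p)\tilde{S}$, invoke Theorem~\ref{semigroup1} for the Cohen--Macaulayness of $\tilde{S}$, and then transfer CM through the quotient by a nonzerodivisor. The one small divergence is in justifying that last transfer: the paper observes that $x_0 - p$ lies in the unique homogeneous maximal ideal of the graded ring $\tilde{S}$ and cites a graded version of ``CM mod a regular element is CM,'' whereas you deduce that $x_0 - p$ is a nonzerodivisor from $\tilde{S}$ being a domain and then apply the local version at each prime containing $x_0 - p$; both are valid, and yours is arguably the more elementary and standard route.
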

\begin{proof}
    I have already remarked that $\tilde{R}$ is a Noetherian $R$-module, and that in fact $\tilde{R}$ is finitely generated as a $\Zp$-algebra by $p$-monomials (Thm \ref{psgnorm}). It remains to show that $\tilde{R}$ is a Cohen-Macaulay $R$-module. As in the proof of Theorem \ref{semigroup1}, it is enough to show that $\tilde{R}$ is Cohen-Macaulay as a ring.

    Write $R = \Zp[\overline{M}]$, where $M \subseteq \Z^{n+1}$ is the semigroup associated to $R$, and let $S = \Zp[M] \subset T[x_0^{\pm 1}]$ be the corresponding semigroup ring. By Corollary \ref{phitilde}, the normalization $\tilde{S}$ surjects onto $\tilde{R}$ via quotienting by the element $x_0 - p$, a nonzerodivisor contained in the homogeneous maximal ideal of $\tilde{S}$. Theorem \ref{semigroup1} shows that $\tilde{S}$ is Cohen-Macaulay, and the quotient of a Cohen-Macaulay graded ring by a nonzerodivisor in the homogeneous maximal idea is still Cohen-Macaulay (\cite[Exercise III.9]{gradedNO} and \cite[Thm 30 (ii)]{matsumura}).
\end{proof}

\begin{cor}
    Let $R \subseteq T$ be a pointed affine $p$-semigroup ring. Then any length $d$ sequence in $R$ with a minimal prime of height $d$ is a Q-sequence.
\end{cor}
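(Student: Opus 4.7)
The plan is to deduce this corollary as an immediate application of Proposition \ref{reg} to the normalization $\tilde{R}$ viewed as an $R$-module. Theorem \ref{psemigroup} supplies exactly the hypothesis Proposition \ref{reg} requires: the normalization $\tilde{R}$ of $R$ is Noetherian and Cohen-Macaulay as an $R$-module. Since Proposition \ref{reg} states that any ring possessing a Noetherian Cohen-Macaulay module satisfies the non-local form of Conjecture \ref{sopcont} (that is, any length $d$ sequence generating an ideal with a height $d$ minimal prime is a Q-sequence), the corollary follows.

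To unpack this one level: given $\xs = x_1, \ldots, x_d$ in $R$ with $P$ a height $d$ minimal prime of $(\xs)R$, one localizes at $P$. The module $\tilde{R}_P$ is still Noetherian and Cohen-Macaulay over $R_P$, and the image of $\xs$ is a system of parameters for $R_P$. Because $\tilde{R}$ is an integral extension of $R$, the localization $\tilde{R}_P$ is nonzero, so $\xs$ forms a regular sequence on $\tilde{R}_P$. By \cite[Prop 1.2(c)]{contentHH} this yields $\qL_I(R_P / I_{\ts}) = t_1 \cdots t_d$ for every $\ts$, hence $\h_{\xs}^d(R_P) = 1$, and Proposition \ref{contentHH2.5} applied to $R \to R_P$ then gives $1 \geq \h_{\xs}^d(R) \geq \h_{\xs}^d(R_P) = 1$.

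There is no real obstacle here; the entire technical content of the argument is contained in Theorem \ref{psemigroup}, whose proof in turn relies on the bijective correspondence between $p$-monomials and monomials established in Section \ref{pmonomsec}, Hochster's Cohen-Macaulayness theorem for normal semigroup rings (Theorem \ref{toricHthm1}), and the fact that $x_0 - p$ is a nonzerodivisor in the homogeneous maximal ideal of $\tilde{S}$. The corollary itself is simply the bookkeeping step that packages these results into the form relevant to Conjecture \ref{sopcont}.
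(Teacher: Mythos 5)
Your proposal matches the paper's own argument exactly: the paper cites Proposition \ref{reg} together with Theorem \ref{psemigroup} in a one-line proof, and your "unpacking" just reproduces the localization argument already contained in the proof of Proposition \ref{reg}. No discrepancy; the proof is correct and essentially identical.
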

\begin{proof}
    By Proposition \ref{reg}, it is enough to find a Noetherian Cohen Macaulay $R$\babelhyphen{nobreak}module, so the result follows immediately from from Theorem \ref{psemigroup}.
\end{proof}

\subsection{Non-pointed semigroups}
\label{locsec}
So far, I have proved Theorem \ref{bigsec5thm} for the case of pointed affine $p$-semigroup rings, by demonstrating that the normalization of a pointed affine $p$-semigroup ring is a Noetherian Cohen-Macaulay module. The goal of Section 5.3 is to show that the theorem holds for non-pointed $p$-semigroup rings as well. I will accomplish this by proving that every affine $p$-semigroup ring is the localization of a pointed affine $p$-semigroup ring.

First, I will need the following proposition:


\begin{prop}[Gordon's lemma, {\cite[Thm 7.16]{cca}}]
    \label{latticecone}
    Let $\sigma \subseteq \R^n$ be a rational polyhedral cone, and let $G \subset \Z^n$ be a subgroup. Then $G \cap \sigma$ is a finitely generated semigroup.
\end{prop}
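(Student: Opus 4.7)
The plan is to reduce to the classical case where the subgroup equals the full lattice and the cone sits inside the ambient real vector space it generates, then execute the standard parallelepiped argument. This is really two steps: a linear algebra reduction followed by a compactness/discreteness observation.

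First the reduction. Since $G \subseteq \Z^n$ is a subgroup of a finitely generated free abelian group, $G$ is itself free of some rank $m \leq n$. Choose a $\Z$-basis $e_1, \ldots, e_m$ for $G$, and let $V = \R e_1 + \cdots + \R e_m \subseteq \R^n$ be its real span. Under the coordinates given by this basis, the pair $(V, G)$ is identified with $(\R^m, \Z^m)$, so the proposition for $(G, \sigma)$ will follow from the proposition for $(\Z^m, \sigma \cap V)$ once I know that $\sigma \cap V$ is a rational polyhedral cone in $V$. To see the latter, write $\sigma = \bigcap_{i=1}^N \{x \in \R^n : \langle a_i, x\rangle \geq 0\}$ with $a_i \in \Q^n$; intersecting with $V$ just restricts each linear functional $\langle a_i, -\rangle$ to $V$, and each such restriction has rational coefficients in the basis $e_1, \ldots, e_m$ (since both the $a_i$ and the $e_j$ are rational). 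Thus $\sigma \cap V$ is a rational polyhedral cone in $V$, and $G \cap \sigma = G \cap (\sigma \cap V)$.

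Now the classical argument. Replacing $(\R^n, \Z^n, \sigma)$ with $(V, G, \sigma \cap V)$, I may assume $G = \Z^n$ and $\sigma$ is a rational polyhedral cone in $\R^n$. Write $\sigma = \R_{\geq 0}v_1 + \cdots + \R_{\geq 0}v_k$ with each $v_i \in \Z^n$ (rescale rational generators to clear denominators). Form the compact half-open parallelepiped
\[
    P = \left\{ \sum_{i=1}^k t_i v_i : 0 \leq t_i \leq 1 \right\} \subset \R^n.
\]
Because $\Z^n$ is a discrete subset of $\R^n$ and $P$ is compact, $\Z^n \cap P$ is a finite set. For any $g \in \Z^n \cap \sigma$, write $g = \sum t_i v_i$ with $t_i \geq 0$ and set $n_i = \lfloor t_i \rfloor$; then
\[
    g = \sum_{i=1}^k n_i v_i + \left( g - \sum_{i=1}^k n_i v_i \right),
\]
where the second summand lies in $\Z^n \cap P$ (it has all coordinates $t_i - n_i \in [0,1)$ in the $v_i$-expansion, and it is a $\Z$-linear combination of elements of $\Z^n$). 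Hence $\Z^n \cap \sigma$ is generated as a semigroup by the finite set $\{v_1, \ldots, v_k\} \cup (\Z^n \cap P)$.

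The only step that is not entirely mechanical is confirming that $\sigma \cap V$ remains rational polyhedral inside $V$; this is where the hypothesis that both $\sigma$ and $G$ (hence $V$) are rational gets used. Everything else is either a change of basis or the standard compact-parallelepiped trick.
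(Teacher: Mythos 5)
Your proof is correct and self-contained. The paper cites this result from \cite[Thm 7.16]{cca} without supplying its own argument, so there is no in-paper proof to compare against; what you have written is exactly the standard textbook proof---reduce to the full-lattice case via a change of basis identifying $(G, \operatorname{span}_{\R} G)$ with $(\Z^m, \R^m)$, then observe that every lattice point of the cone decomposes as a lattice point of the bounded generating parallelepiped plus a nonnegative integer combination of the integral ray generators. Your reduction step is the part that genuinely needs a comment, and you handle it correctly: since the halfspace normals $a_i \in \Q^n$ and the basis vectors $e_j \in \Z^n$ are both rational, the restricted functionals have rational coefficients in the $e_j$-coordinates, so $\sigma \cap V$ is rational polyhedral with respect to $G$. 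The only cosmetic slip is calling $P$ ``half-open'' while writing the closed bounds $0 \le t_i \le 1$; this does not affect anything, since you only use that $P$ is compact (hence $\Z^n \cap P$ is finite) and that $P$ contains every remainder with $v_i$-coordinates in $[0,1)$.
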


\begin{thm}
        \label{localizingnormal}
        Let $M$ be a finitely generated normal semigroup. Then there is a finitely generated sub-semigroup $N$ generating a pointed cone and an element $\us \in N$ such that $M = N + \Z \us$. Moreover, if $\bs \in M$ is any single fixed element, then $N$ can be chosen such that $\bs \in N$.
\end{thm}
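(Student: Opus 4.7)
The plan is to analyze the rational polyhedral cone $\sigma = \R_{\geq 0} M \subseteq \R^n$ together with its lineality space $L = \sigma \cap (-\sigma)$, and to construct $N$ by slicing $\sigma$ along a rational hyperplane transverse to a chosen unit of $M$. Since $M$ is normal we have $M = \Z M \cap \sigma$, and in particular the units $M \cap (-M)$ form the full-rank sublattice $\Z M \cap L$ of $L$.

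If $L = 0$ then $\sigma$ is already pointed, and I would simply take $N = M$ with $\us = 0 \in M$ (so $b \in M = N$ as required). Assume from now on that $\dim L \geq 1$. The plan is to pick a nonzero element $\us \in \Z M \cap L$ and a rational linear hyperplane $H \subset \R^n$ with $\us \notin H$, then set $\tau = \sigma \cap H$ and
\[
\sigma' := \tau + \R_{\geq 0}\us, \qquad N := \Z M \cap \sigma'.
\]
The cone $\sigma'$ is a rational polyhedral cone by construction, and Gordon's lemma (Proposition~\ref{latticecone}) then guarantees that $N$ is a finitely generated sub-semigroup of $M$ containing $\us$.

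The two key claims are: (i) $\sigma'$ is pointed, and (ii) $M = N + \Z\us$. For (i) I would argue that any $x \in \sigma' \cap (-\sigma')$, written as $x = t + \alpha\us = -t' - \beta\us$ with $t, t' \in \tau$ and $\alpha, \beta \geq 0$, forces $t + t' = -(\alpha+\beta)\us \in H$ by the linear decomposition $\R^n = H \oplus \R\us$, hence $\alpha = \beta = 0$ and $t = -t'$; this collapses to $x = 0$ exactly when $L \cap H = 0$, which is why the argument works cleanly in lineality $1$. For (ii), any $m \in M$ has a unique decomposition $m = t_m + \alpha_m \us$ with $t_m \in H$, $\alpha_m \in \R$; taking $k = \lfloor \alpha_m \rfloor$ gives $m - k\us = t_m + (\alpha_m - k)\us \in \tau + \R_{\geq 0}\us = \sigma'$ and $m - k\us \in \Z M$, whence $m - k\us \in N$ and $m \in N + \Z\us$. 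The reverse inclusion is immediate since $\us$ is a unit of $M$ and $N \subseteq M$.

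For the \emph{moreover}, given $\bs \in M$ I would decompose $\bs = t_b + \alpha_b \us$; if $\alpha_b \geq 0$ then $\bs \in \sigma' \cap \Z M = N$, and otherwise I replace $\us$ by $-\us$ (still a unit of $M$), which flips the sign of $\alpha_b$ and puts $\bs$ into the new $\sigma'$.

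The main obstacle is Claim (i) in the case $\dim L \geq 2$: a single $\R\us$ can enlarge the lineality of a pointed cone by at most one, so $\sigma' + \R\us$ cannot recover $\sigma$ and no single-step choice of pointed $N$ exists. The intended use is therefore to apply the statement recursively, peeling one line of $L$ at a time by writing $M = M_1 + \Z\us_1$ with $M_1$ still normal and finitely generated but of strictly smaller lineality; after $\dim L$ iterations one lands in a pointed $N$, with the extra freedom to keep $\bs$ inside $N$ at every stage by the sign choice above.
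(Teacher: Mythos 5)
Your single-step construction (slicing $\sigma$ by a rational hyperplane $H$ with $\us \notin H$ and taking $N = \Z M \cap (\tau + \R_{\geq 0}\us)$) is correct and essentially matches the paper's idea in the lineality-$1$ case, and you correctly identify why it fails to produce a pointed cone when $\dim L \geq 2$. The paper avoids recursion entirely: it picks a basis $\vs_1,\dots,\vs_k$ of $L$ with $\pm\vs_i \in M$, completes it to a basis of $\R^n$ by vectors of $M$, uses the resulting projection $\pi$ onto $L$ to build the pointed cone $\sigma = \R_{\geq 0}(\vs_1,\dots,\vs_k) + (\R_{\geq 0}M \cap \ker\pi)$, and then takes $\us = \vs_1 + \cdots + \vs_k$, an interior point of the positive orthant of $L$. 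The interiority of $\us$ is exactly what lets one write any $\as \in M$ as $(\as + z\us) - z\us$ with $\as + z\us \in N$ for $z \gg 0$, giving $M = N + \Z\us$ with a single $\us$ in one shot.

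The gap in your proposal is the conversion of the recursion's output into the statement being proved. After $\dim L$ iterations you have $M = N_k + \Z\us_1 + \cdots + \Z\us_k$ with $N_k$ pointed, but the theorem asserts $M = N + \Z\us$ for a \emph{single} $\us \in N$. One can collapse $\Z\us_1 + \cdots + \Z\us_k$ to $\Z(\us_1 + \cdots + \us_k)$ provided every $\us_i$ lies in $N_k$, but arranging this requires carrying $\us_1,\dots,\us_{i-1}$ (or an accumulated sum) forward into $N_i$ at each stage, in addition to $\bs$. The ``moreover'' clause, both in the statement and in your sign-flip mechanism, tracks only one element, and a general multi-element version is false: for $M = \Z$ you cannot have a pointed $N$ containing both $1$ and $-1$. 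So as written, the recursion can preserve either $\bs$ or the accumulated unit, but not both, and the proposal never explains how to reconcile this. A clean repair would be to drop the recursion and handle the full lineality space at once, as the paper does, by choosing $\us$ in the interior of a pointed cone in $L$ rather than a single generator of a line.
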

\begin{proof}
     Fix an embedding $M \subset \Z^n$ such that $\Z M = \Z^n$. Let $\tau = \R_{\geq 0} M$ be the rational polyhedral cone generated by $M$. Since $M$ is normal, we have $M = \tau \cap \Z^n$ \cite[Prop 7.25]{cca}.

    Consider the vector space $V = \{ \vs \in \R^n \; : \; \R \vs \subseteq \tau \}$. For any $\vs \in V \cap \Z^n$, notice that $\vs, -\vs \in \tau \cap \Z^n = M$. Therefore, let $\vs_1, \hdots, \vs_k$ be a basis for $V$ such that $\vs_i, -\vs_i \in M$ for every $i$.

    I claim that there are $\ws_1, \hdots, \ws_t \in M$ such that $\vs_1, \hdots, \vs_k, \ws_1, \hdots, \ws_t$ is a basis for $\R^n$. If $V = \R^n$, then we're done ($t = 0$). Otherwise, $V \subset \R_{\geq 0} M$ is a proper subset. By density of rationals, there exists $\ws_1 \in \Q_{\geq 0} M$ with $\ws_1 \notin V$, and by clearing denominators we may assume $\ws_1 \in M$.

    If $V + \R_{\geq 0} \ws_1 = \R_{\geq 0} M$, then $V + \R \ws_1 = \R M = \R^n$, and $\vs_1, \hdots, \vs_k, \ws_1$ is a basis. Otherwise, by the same argument, there is $\ws_2 \in M$ with $\ws_2 \notin V + \R_{\geq 0} \ws_1$. I claim that $\ws_2$ is independent from $\vs_1, \hdots, \vs_k, \ws_1$. For contradiction, assume that $\ws_2 \in V + \R \ws_1$; since $\ws_2 \notin V + \R_{\geq 0} \ws_1$, we can write $\ws_2 = \vs - r_1 \ws_1$ for some $\vs \in V$ and $r_1 \in \R_{\geq 0}$. Then $-\ws_2 = -\vs + r_1 \ws_1 \in V + \R_{\geq 0} \ws_1 \subset \R_{\geq 0} M$, so $\R \ws_2 \subseteq \R_{\geq 0} M$. But this is impossible, since $\ws_2 \notin V$.

    By induction, we can choose $\ws_3, \hdots, \ws_t \in M$ such that $\ws_{i+1} \notin V + \R (\ws_1, \hdots, \ws_i)$ and $V + \R(\ws_1, \hdots, \ws_t) = \R^n$. Then $\vs_1, \hdots, \vs_k, \ws_1, \hdots, \ws_t$ is a basis for $\R^n$.

    Let $\pi: \R^n \to V$ be the projection map onto $V$ with respect to this basis. Then $\ker \pi$ is a rational polyhedral cone, so $\tau \cap \ker \pi$ is also a rational polyhedral cone with some generators $\ws_1', \hdots, \ws_{t'}' \in \Z^n \cap \tau = M$. Let $\sigma = \R_{\geq 0} (\vs_1, \hdots, \vs_k, \ws_1', \hdots, \ws_{t'}')$. It is not hard to show that linearly independent vectors generate a pointed cone, so $\sigma$ is pointed. Take $N = M \cap \sigma$ and $\us = \vs_1 + \hdots + \vs_k \in N$. The semigroup $N$ generates a pointed cone $\R_{\geq 0} N = \sigma$. Moreover, $N$ is finitely generated by Gordon's lemma, since
    \[
       N = M \cap \sigma = \Z^n \cap (\tau \cap \sigma). 
    \]
    It remains to show that $M = N + \Z \us$. Clearly $N + \Z \us \subseteq M$; on the other hand, if $\as \in M$, write $\as = \vs + \ws$, where $\vs \in V$ and $\ws \in \ker \pi$. Then $-\vs \in V \subseteq \tau$, so $\ws = \as - \vs \in \tau \cap \ker \pi = \R_{\geq 0} (\ws_1', \hdots, \ws_{t'}')$. Let $z \in \N$ be large enough such that $\vs + z\us \in \R_{\geq 0} (\vs_1, \hdots, \vs_k)$. Then
    \[
       \as + z\us = (\vs + z\us) + \ws \in M \cap \sigma = N.
    \]
    Therefore $\as \in N + \Z \us$.

    Let $\bs \in M$ be some fixed element. If $\bs \in V$, there is a basis for $V$ with $\vs_1 = \bs$, so that $\bs \in M \cap \R_{\geq 0}(\vs_1, \hdots, \vs_k) \subset N$. Otherwise, take $\ws_1 = \bs \in M \setminus V$. Then $\bs \in M \cap \ker \pi \subset N$.
\end{proof}

\begin{thm}
    \label{localizing}
    Let $M$ be any finitely generated semigroup, not necessarily normal. Then there is a finitely generated sub-semigroup $N$ with $\R_{\geq 0}N$ pointed and an element $\us \in N$ such that $M = N + \Z \us$, and $N$ can be chosen such that $\bs \in N$ for any single fixed element $\bs \in M$. Consequently, any affine semigroup or $p$-semigroup ring is the localization of a pointed affine semigroup or $p$-semigroup ring.
\end{thm}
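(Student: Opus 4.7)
The plan is to reduce to the normal case of Theorem \ref{localizingnormal} via the normalization $\tilde{M} := \Z M \cap \R_{\geq 0} M$. Since $M$ is finitely generated, Gordon's lemma (Proposition \ref{latticecone}) implies that $\tilde{M}$ is finitely generated, and by rational density in the cone $\R_{\geq 0}M$ every element of $\tilde{M}$ has a positive integer multiple in $M$. Apply Theorem \ref{localizingnormal} to $\tilde{M}$ with fixed element $\bs$ to obtain a finitely generated pointed subsemigroup $\tilde{N} \subseteq \tilde{M}$ generating a cone $\sigma := \R_{\geq 0}\tilde{N}$, together with an element $\tilde{\us} \in \tilde{N}$ such that $\tilde{M} = \tilde{N} + \Z\tilde{\us}$ and $\bs \in \tilde{N}$. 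From the construction of $\sigma$ and $\tilde{\us}$ in that proof, for any $\as \in \R_{\geq 0}M$ a sufficiently large non-negative multiple of $\tilde{\us}$ may be added to $\as$ to place it inside $\sigma$.

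To descend to $M$, let $\ws_1, \ldots, \ws_s$ generate $\tilde{N}$; each has a positive multiple in $M$, so there is a single integer $K \geq 1$ with $K\ws_i \in M$ for every $i$, whence $K\tilde{N} \subseteq M$. Set $\us := K\tilde{\us} \in M$, and for each generator $\as_j$ of $M$ pick $z_j \geq 0$ large enough that $\as_j + z_j\us \in \sigma$. Define $N$ to be the sub-semigroup of $M$ generated by $K\ws_1, \ldots, K\ws_s$, together with $\as_1 + z_1\us, \ldots, \as_r + z_r\us$, and $\bs$. Then $N$ is finitely generated, sits inside $M \cap \sigma$ (so $\R_{\geq 0}N$ is pointed), contains $\bs$ and $\us$ (since $\tilde{\us}$ is an $\N$-combination of the $\ws_i$, $\us$ is an $\N$-combination of the $K\ws_i$), and satisfies $N + \Z\us \subseteq M$ trivially. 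For the reverse inclusion, any $\as = \sum_j n_j \as_j \in M$ satisfies
\[
    \as + \Big(\sum_j n_j z_j\Big)\us \;=\; \sum_j n_j(\as_j + z_j\us) \;\in\; N,
\]
so $\as \in N + \Z\us$.

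The crux of the argument is that the simpler choice of $N$ as the subsemigroup generated only by the $K\ws_i$ yields merely $KM \subseteq N + \Z\us$; the extra generators $\as_j + z_j\us$ are included precisely to promote this to $M = N + \Z\us$ without enlarging $\sigma$. The ring consequence now follows: letting $u$ denote the (p-)monomial corresponding to $\us$, any (p-)monomial of multidegree $\underline{n} + z\us \in N + \Z\us$ factors as a (p-)monomial of multidegree $\underline{n} \in N$ times $u^z$, exhibiting the affine (p-)semigroup ring defined by $M$ as the localization at $u$ of the pointed affine (p-)semigroup ring defined by $N$.
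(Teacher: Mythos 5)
Your construction of $N$ as an explicitly finitely generated sub-semigroup of $M$ is a genuine departure from the paper's argument: the paper takes $N := M \cap \tilde{N}$ and then must invoke the Eakin--Nagata theorem (after showing $K[N] \hookrightarrow K[\tilde{N}]$ is module-finite) to establish that $N$ is finitely generated, whereas your version simply names finitely many generators and bypasses Eakin--Nagata entirely. That is a real simplification, and your verification of the reverse inclusion $M \subseteq N + \Z\us$ via $\as = \sum_j n_j \as_j$ and $\as + (\sum_j n_j z_j)\us = \sum_j n_j(\as_j + z_j\us) \in N$ is correct, as is the observation that $\R_{\geq 0}N \subseteq \sigma$ is pointed.

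There is, however, a gap: the inclusion $N + \Z\us \subseteq M$ is asserted to hold ``trivially,'' but it does not. Since $0 \in N$, this inclusion forces $-\us \in M$, and you have only established $\us = K\tilde{\us} \in M$. Showing $-\us \in M$ is exactly where the paper does extra work: $-\tilde{\us}$ lies in $\tilde{M}$ (because $\tilde{\us}$ lies in the linear part $V$ of the cone), hence some positive multiple $t'(-\tilde{\us})$ lies in $M$, and then the semigroup identity $-K\tilde{\us} = K\bigl(t'(-\tilde{\us})\bigr) + (t'-1)\bigl(K\tilde{\us}\bigr)$ places $-\us$ in $M$. Without this step, $N + \Z\us$ could strictly contain $M$. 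A smaller omission: for the $p$-semigroup consequence one must apply the semigroup statement with $\bs = \enaught$ so that $\enaught \in N$ and $\Zp[\overline{N}]$ makes sense as a $p$-semigroup ring; the paper spells this out, and your final paragraph glosses over it.
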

\begin{proof}
    Let $\tilde{M}$ be the normalization of $M$; this is finitely generated, since $K[M]$ is excellent for any field $K$ \cite[33.H Thm 78]{matsumura}. By Theorem \ref{localizingnormal}, there is a pointed affine sub-semigroup $\tilde{N} \subseteq \tilde{M}$ and $\tilde{\us} \in \tilde{N}$ such that $\tilde{M} = \tilde{N} + \Z \tilde{\us}$.

    I claim that $\tilde{\us}$ can be replaced with some $\us \in \tilde{N} \cap M$. The monomial $\xs^{\tilde{\us}}$ is integral over $K[M]$, so there is a nonzero $t \in \N$ such that $t\tilde{u} = \us \in M$. Then
    \[
        -\tilde{\us} =  (t-1)\tilde{\us} - \us \in \tilde{N} + \Z \us,
    \]
    and therefore $\tilde{N} + \Z \tilde{\us} = \tilde{N} + \Z \us = \tilde{M}$.
    
    Notice that $-\us \in M$ also; certainly $-\us \in \tilde{M}$, so $\xs^{-\us}$ is integral over $K[M]$ and there is some $t>0$ such that $t (-\us) \in M$. Then $-t\us + (t-1)\us = -\us \in M$.

    Let $N = M \cap \tilde{N}$. Clearly $M \supseteq N + \Z \us$; on the other hand, if $\vs \in M$, write $\vs = \tilde{\ws} + z \us$ for some $z \in \Z$, $\tilde{\ws} \in \tilde{N}$. Then $\tilde{\ws} = \vs - z \us \in M \cap \tilde{N} = N$. Therefore, we have $M = N + \Z \us$. The cone $\R_{\geq 0} N$ is certainly pointed, since it is contained in $\R_{\geq 0} \tilde{N}$. It remains to show that $N$ is finitely generated.

    I claim that the inclusion map $K[N] \hookrightarrow K[\tilde{N}]$ is module-finite. Since $\tilde{N}$ is finitely-generated, $K[\tilde{N}]$ is in particular a finitely generated $K[N]$-algebra; therefore it is enough to show that $K[N] \hookrightarrow K[\tilde{N}]$ is integral. Let $\tilde{\ws} \in \tilde{N}$. By integrality over $K[M]$, there exists $t \in \N$ nonzero such that $t \tilde{\ws} \in M$. But then $t \tilde{\ws} \in \tilde{N} \cap M = N$, so $\xs^{\tilde{\ws}}$ is integral over $K[N]$. 

    The Eakin-Nagata theorem says that if $A \hookrightarrow B$ is module finite and $B$ is Noetherian, then $A$ is Noetherian \cite[Thm 3.7 (i)]{matsumura}. The semigroup ring $K[\tilde{N}]$ is Noetherian, so by Eakin-Nagata it follows that $K[N]$ is Noetherian; in particular, the homogeneous maximal ideal is finitely generated! Therefore $N$ is a finitely generated semigroup.

    We have shown that $\Zp[M] = \Zp[N][\xs^{-\us}]$, where $\Zp[N]$ is a pointed affine semigroup ring. To get the corresponding result for $p$-semigroup rings, we will need $\enaught$ to be contained in $N$. Indeed, if $\bs \in M$ is any single fixed element, $\tilde{N}$ can be chosen to contain $\bs$ (Thm \ref{localizingnormal}), in which case $\bs \in M \cap \tilde{N} = N$. Therefore, if $\enaught \in M$, then $N$ can be chosen to contain $\enaught$, in which case $\Zp[\overline{M}] = \Zp[\overline{N}][p^{-u_0}x_1^{-u_1}\hdots x_n^{-u_n}]$.
\end{proof}

\begin{cor}
    If $R$ is an affine semigroup or $p$-semigroup ring, then the normalization $\tilde{R}$ of $R$ is a Noetherian Cohen-Macaulay $R$ module. Therefore, any length $d$ sequence with a minimal prime of height $d$ in $R$ is a Q-sequence. 
\end{cor}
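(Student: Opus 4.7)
The plan is to reduce the non-pointed case to the pointed case, which has already been handled in Theorems \ref{semigroup1} and \ref{psemigroup}. By Theorem \ref{localizing}, any affine semigroup ring (resp.\ affine $p$-semigroup ring) $R$ can be written as $R = R_0[f^{-1}]$, where $R_0$ is a pointed affine semigroup (resp.\ $p$-semigroup) ring and $f$ is a monomial (resp.\ $p$-monomial) in $R_0$. So it suffices to show that the Noetherian Cohen-Macaulay module property for the normalization passes to such localizations.

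First I would verify that normalization commutes with this localization: $\widetilde{R_0[f^{-1}]} = \widetilde{R_0}[f^{-1}]$. This is standard since $f$ is a nonzerodivisor in $R_0$ and normalization commutes with localization at a multiplicative set of nonzerodivisors. By Theorem \ref{semigroup1} or \ref{psemigroup}, $\widetilde{R_0}$ is a Noetherian Cohen-Macaulay $R_0$-module; in particular it is a finitely generated $R_0$-module, so $\tilde R = \widetilde{R_0} \otimes_{R_0} R$ is a finitely generated $R$-module. Since $R$ is Noetherian (as a localization of the Noetherian ring $R_0$), $\tilde R$ is a Noetherian $R$-module.

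Next I would check the Cohen-Macaulay property. Since $\widetilde{R_0}$ is Cohen-Macaulay as an $R_0$-module and $R_0 \hookrightarrow \widetilde{R_0}$ is module-finite, $\widetilde{R_0}$ is Cohen-Macaulay as a ring. Localization at $f$ preserves the Cohen-Macaulay property, so $\tilde R = \widetilde{R_0}[f^{-1}]$ is Cohen-Macaulay as a ring, and therefore, being a module-finite extension of $R$, it is Cohen-Macaulay as an $R$-module (depth and dimension over $R_\mathfrak{p}$ agree with the ring-theoretic depth and dimension at the primes of $\tilde R$ lying over $\mathfrak{p}$ under a module-finite extension).

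Having exhibited a Noetherian Cohen-Macaulay $R$-module, Proposition \ref{reg} immediately gives the second statement: any length-$d$ sequence in $R$ generating an ideal with a minimal prime of height $d$ is a Q-sequence, finishing the proof of Theorem \ref{bigsec5thm}. I do not anticipate a genuine obstacle here; the only point that requires a moment of care is the bookkeeping around the transfer of the Cohen-Macaulay property between the ring and module perspectives after localization, but this is routine given the module-finite inclusion $R \hookrightarrow \tilde R$.
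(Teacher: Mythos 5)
Your proof takes essentially the same approach as the paper: reduce to the pointed case via Theorem \ref{localizing}, use that normalization commutes with localization to get $\tilde R = \widetilde{R_0}[f^{-1}]$, invoke Theorems \ref{semigroup1} and \ref{psemigroup} for the pointed case, use that localization preserves Cohen-Macaulayness, and conclude with Proposition \ref{reg}. The only cosmetic difference is that the paper deduces Noetherianness of $\tilde R$ directly from excellence of affine ($p$-)semigroup rings, while you derive it from module-finiteness of $\widetilde{R_0}$ over $R_0$ and base change; both are fine.
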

\begin{remark}
    This finishes the proof of Theorem \ref{bigsec5thm}.
\end{remark}
\begin{proof}
    Affine ($p$-)semigroup rings are excellent, so $\tilde{R}$ is a Noetherian $R$-module \cite[33.H Thm 78]{matsumura}. By Theorem \ref{localizing}, there is a pointed affine ($p$-)semigroup ring $S$ and an element $f \in S$ such that $R = S[1/f]$. Let $\tilde{S}$ be the normalization of $S$. Then $\tilde{R} = \tilde{S}[1/f]$, since normalization commutes with localization. By Theorems \ref{semigroup1} and \ref{psemigroup}, we know that $\tilde{S}$ is Noetherian and Cohen-Macaulay (both as a ring and as an $S$-module). Any localization of a Cohen-Macaulay ring is still Cohen-Macaulay, so $\tilde{R}$ a Cohen-Macauly ring which is module finite over $R$, and thus Cohen-Macaulay as an $R$-module.

    Since $R$ has a Noetherian Cohen-Macaulay module, any length $d$ sequence with a minimal prime of height $d$ in $R$ is a Q-sequence by Proposition \ref{reg}.
\end{proof}

\section{Toric face rings}
A \textit{toric face ring} is another combinatorial ring, typically defined over a field, which is constructed by “gluing" finitely many affine semigroup rings along faces of the associated cones. In the last section of this paper, I will show that toric face rings over $\Zp$ satisfy Conjecture \ref{sopcont}. I will then construct \textit{$p$-toric face rings} by gluing together $p$-semigroup rings in an analogous fashion, and show that Conjecture \ref{sopcont} holds for these as well.

\begin{defn}[Algebra retract, {\cite[Def 1.1]{BCK+23}}]
    \label{retracts}
    Let $R$ be an $A$-algebra, where $A$ is a domain. An inclusion of $A$-algebras $\iota: S \hookrightarrow A$ is called an \emph{algebra retract} if there is a surjective $A$-algebra map $\pi: R \twoheadrightarrow S$ such that $\pi \circ \iota = \text{id}_S$.

    If $\{S_j\}_{j \in J}$ is a finite collection of domains which are algebra retracts of $R$, with defining maps $\iota_j: S_j \hookrightarrow A$ and $\pi_j: R \twoheadrightarrow S_j$, then $R$ is \emph{realized by the retracts} $\{S_j\}_{j \in J}$ if the following two conditions hold:
    \begin{enumerate}
        \item The $A$-algebra map $R \to \prod_{j} S_j$ given by $f \mapsto (\pi_j(f))_j$ is injective.

        \item The $S_j$ are irredundant, in the sense that the map in (1) would not be injective if any of the $S_j$ were dropped.
    \end{enumerate}
\end{defn}

\begin{remark}
    If condition (1) is satisfied for some collection $\{S_j\}_{j \in J}$, then there is some subset which is irredundant.
\end{remark}

\begin{remark}
    \label{retractass}
    Since the $S_j$ are domains, the kernels $\ker (\pi_j) =: P_j$ are prime ideals of $R$. Conditions (1) and (2) mean exactly that $0 = \bigcap_{j} P_j$
    is the irredundant primary decomposition of the zero ideal. In particular, $R$ is a reduced ring, and the associated primes are $\Ass(R) = \{P_j\}_{j \in J}$.
\end{remark}

\begin{prop}
    \label{retractQseq} Let $R$ be an $A$-algebra and let $\{S_j\}_{j \in J}$ be a finite collection of domains which are $A$-algebra retracts of $R$ such that $R$ is realized by $\{S_j\}_{j \in J}$. Suppose that every $S_j$ satisfies Conjecture \ref{sopcont}; in other words, any length $d$ sequence in $S_j$ generating an ideal of height $d$ is a $Q$-sequence in $S_j$. Then $R$ also satisfies Conjecture \ref{sopcont}.
\end{prop}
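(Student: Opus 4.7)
The plan is to apply Proposition \ref{minprimeQseq} to $R$, reducing Conjecture \ref{sopcont} for $R$ to the same conjecture for each quotient $R/P$ where $P$ is a minimal prime. The entire content of the argument is then to identify the minimal primes of $R$ and match each quotient to one of the rings $S_j$, for which Conjecture \ref{sopcont} holds by hypothesis.

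First I would establish that the minimal primes of $R$ are precisely the kernels $P_j := \ker(\pi_j)$. By Remark \ref{retractass}, the $P_j$ are prime and $\bigcap_{j \in J} P_j = 0$ is an irredundant decomposition. Any minimal prime $Q$ of $R$ contains $\bigcap_j P_j = 0$, so by primality and finiteness of $J$ it must contain some $P_j$, and by minimality $Q = P_j$. Conversely, each $P_j$ is itself minimal: if $P_k \subsetneq P_j$ held for some $k \neq j$, then $\bigcap_{i \neq j} P_i \subseteq P_k \subseteq P_j$, and omitting $P_j$ would not alter the intersection, contradicting irredundancy.

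With this identification in hand, the surjection $\pi_j \colon R \twoheadrightarrow S_j$ with kernel $P_j$ yields $R/P_j \cong S_j$ via the first isomorphism theorem. The hypothesis on the $S_j$ is exactly the non-local form of Conjecture \ref{sopcont} for each $R/P_j$, so Proposition \ref{minprimeQseq} applies and gives the result for $R$. I do not expect any serious obstacle; the only subtlety worth flagging is that the irredundancy clause in Definition \ref{retracts} is essential, as it is precisely what prevents one of the $P_j$ from being a non-minimal (embedded) prime and thus guarantees that every minimal prime of $R$ is hit by one of the $S_j$.
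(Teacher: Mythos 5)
Your proof is correct and is exactly the argument the paper intends; the paper's own proof is the one-liner ``Follows immediately from Proposition \ref{minprimeQseq} and Remark \ref{retractass},'' and you have simply filled in the details of why the $P_j$ are precisely the minimal primes and why $R/P_j \cong S_j$.
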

\begin{proof}
    Follows immediately from Proposition \ref{minprimeQseq} and Remark \ref{retractass}.
\end{proof}

\begin{defn}[Toric face ring, {\cite[page 202]{stanleyTFR}}]
    \label{toricfacering}
    Let $\Sigma \subset \R^n$ be a rational polyhedral fan of pointed cones (see \cite[pages 9, 20]{fulton}). A \emph{monoidal complex} supported on $\Sigma$ is a collection $\M = \{M_\sigma : \sigma \in \Sigma \}$ of finitely generated semigroups such that
    \begin{enumerate}[a)]
        \item $M_\sigma \subset \sigma$ and $\R_{\geq 0}M = \sigma$ for every $\sigma \in \Sigma$.
        \item If $\sigma$ is a face of $\tau \in \Sigma$, then $M_\sigma = \sigma \cap M_\tau$.
    \end{enumerate}
    Define $|\M| := \bigcup M_\sigma$ as the set union in $\Z^n$ of the $M_\sigma$ over all $\sigma \in \Sigma$. For a domain $A$, the \emph{toric face ring} defined by $\M$ over $A$ is given as an $A$-module by
    \[
        A[\M] := \bigoplus_{\ts \in |\M|} A x_1^{t_1} \hdots x_n^{t_n},
    \]
    with the ring structure induced by the rule
    \[
    \xs^{\as}\cdot \xs^{\bs} := \begin{cases}
        \xs^{\as + \bs} & \text{if } \exists \sigma \in \Sigma \;\; \as, \bs \in M_\sigma\\
        0 & \text{otherwise}
    \end{cases}
    \]
\end{defn}

\begin{remark}
    \label{toricretract}
    I have expanded the definition slightly from the version in \cite{stanleyTFR} to allow the coefficients to be a domain instead of a field.
    
    As discussed in \cite[page 5]{BCK+23}, toric face rings are realized by retracts $\{A[M_\tau] : \tau \in \mathscr{F}(\Sigma)\}$, where $\mathscr{F}(\Sigma)$ is the set of maximal cones. The kernel of the surjection $A[\M] \to A[M_\tau]$ is the homogeneous prime $P_\tau := (\xs^{\ts} : \ts \notin M_\tau) A[\M]$.
\end{remark}

\begin{thm}
    Let $R = \Zp[\M]$ be a toric face ring. Then any length $d$ sequence in $R$ with a minimal prime of height $d$ is a $Q$-sequence.
\end{thm}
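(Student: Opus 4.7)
The plan is to apply Proposition \ref{retractQseq} directly, with the work having already been done by Theorem \ref{bigsec5thm}. By Remark \ref{toricretract}, the toric face ring $R = \Zp[\M]$ is realized by the collection of retracts $\{\Zp[M_\tau] : \tau \in \mathscr{F}(\Sigma)\}$ indexed by the maximal cones of $\Sigma$, with the surjections $R \twoheadrightarrow \Zp[M_\tau]$ given by killing all monomials $\xs^{\ts}$ with $\ts \notin M_\tau$.

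Each retract $\Zp[M_\tau]$ is a pointed affine semigroup ring: the semigroup $M_\tau$ is finitely generated by the definition of a monoidal complex, and its associated cone $\R_{\geq 0}M_\tau = \tau$ is pointed because $\Sigma$ is a fan of pointed cones. Consequently, $\Zp[M_\tau]$ falls under the scope of Theorem \ref{bigsec5thm} (or, more directly, of Theorem \ref{sgcontent}), so any length $d$ sequence in $\Zp[M_\tau]$ generating an ideal of height $d$ is a Q-sequence.

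With the hypotheses of Proposition \ref{retractQseq} verified, we conclude that any length $d$ sequence in $R$ whose generated ideal has a minimal prime of height $d$ is a Q-sequence in $R$. The only point that might warrant a brief check is that the $\Zp[M_\tau]$ really are $A$-algebra retracts of $R$ in the sense of Definition \ref{retracts} (the inclusion $\Zp[M_\tau] \hookrightarrow R$ composed with the projection being the identity), but this is immediate from the description of the product and the fact that $M_\tau \subseteq |\M|$. The substantive obstacle was proving that (pointed) affine semigroup rings over $\Zp$ satisfy Conjecture \ref{sopcont}, which is already established; once that is in hand, the toric face ring case is essentially a formal consequence of the retract decomposition.
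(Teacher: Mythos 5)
Your proof is correct and follows exactly the same route as the paper: invoke Remark \ref{toricretract} to realize $R$ by the pointed affine semigroup ring retracts $\Zp[M_\tau]$ for $\tau \in \mathscr{F}(\Sigma)$, apply Theorem \ref{sgcontent} to each retract, and conclude via Proposition \ref{retractQseq}.
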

\begin{proof}
    $R$ is realized by retracts which are pointed affine semigroup rings (Remark \ref{toricretract}). I have proved in the previous section (Theorem \ref{sgcontent}) that pointed affine semigroup rings satisfy Conjecture \ref{sopcont}. It follows from Proposition \ref{retractQseq} that $R$ satisfies Conjecture \ref{sopcont} as well.
\end{proof}

I would now like to extend the notion of toric face rings, analogously to the extension of semigroup rings to $p$-semigroup rings in the previous section. The resulting class of rings, which I will call \emph{\boldmath $p$-toric face rings}, will be realized by retracts which are $p$-semigroup rings.  It is not immediately obvious, however, how this should be done. 

Consider a monoidal complex $\M \subseteq \Z^{n+1}$ on a rational polyhedral fan $\Sigma$, where $\Z^{n+1}$ has basis vectors $\enaught, \underline{e_1}, \hdots, \underline{e_n}$. The goal is to define a \emph{\boldmath $p$-toric face ring} $\Zp[\overline{\M}]$ with 
\[
   p^{t_0} x_1^{t_1} \hdots x_n^{t_n} \in \Zp[\M] \quad \text{whenever} \quad \ts \in |\M|,
\]
so that $\Zp[\overline{\M}]$ is realized by $\Zp$-algebra retracts $\{\Zp[\overline{M_\tau}] : \tau \in \mathscr{F}(\Sigma) \}$. However, the fact that $p$ is an element of the coefficient ring causes some issues.

First of all, the interactions between the retracts $\Zp[\overline{M_\tau}] \hookrightarrow \Zp[\overline{\M}]$ should be totally accounted for by the structure of the monoidal complex $\M$. Therefore we have to assume that $\enaught \in M_\tau$ for any maximal cone $\tau$ of $\Sigma$, since otherwise $M_\tau$ will not capture all of the $p$-monomials in $\Zp[\overline{M_\tau}]$. This imposes a rather heavy restriction on the fans $\Sigma$ which are used for this construction.

Secondly, one must be more careful indexing the direct sum used to define $\Zp[\overline{\M}]$ as a $\Zp$-module, because distinct $p$-monomials can span overlapping $\Zp$-modules. Namely, one has that
\[
   \Zp p^{b_0} x_1^{b_1} \hdots x_n^{b_n} \subseteq \Zp p^{a_0} x_1^{a_1} \hdots x_n^{a_n} \quad \text{iff} \quad \bs \in \as + \N \enaught.
\]

Each $p$-monomial should appear in at most one direct summand, so we cannot index the sum over $|\M| = \bigcup_{\sigma \in \Sigma} M_\sigma$ as in Definition \ref{toricfacering}. I will argue that in each maximal cone $\tau$, we can choose some subset of $M_\tau$ that will irredundantly capture all the $p$-monomials in $\Zp[\overline{M_\tau}]$, and moreover that this choice will be compatible with all other maximal cones in the fan.

\begin{prop}
    \label{minrep}
    Let $M \subset \Z^{n+1}$ be a semigroup generating a pointed positive real cone, and assume that $\enaught \in M$. There is always a minimal representative in $M$ for any equivalence class of $\Z M/\Z\enaught$, in the sense that for any $\bs \in M$, there exists a unique $\as \in M$ such that $\as + \N \enaught = M \cap (\bs + \Z \enaught)$.
\end{prop}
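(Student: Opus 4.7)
The plan is to analyze the set of integers
\[
L := \{k \in \Z : \bs + k\enaught \in M\}
\]
and show it equals a ray $\{k \in \Z : k \geq k_{\min}\}$ for a unique $k_{\min} \in \Z$; then $\as := \bs + k_{\min}\enaught$ will be the desired minimal representative, and uniqueness will drop out of the ray description. To begin, $L$ is nonempty and upward closed: $0 \in L$ since $\bs \in M$, and whenever $k \in L$ we have $\bs + (k+1)\enaught = (\bs + k\enaught) + \enaught \in M$ using $\enaught \in M$. Iterating shows $L \supseteq \N$, and any upward-closed subset of $\Z$ must have the form $\{k : k \geq k_{\min}\}$ for some $k_{\min} \in \Z \cup \{-\infty\}$.

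The heart of the argument is ruling out $k_{\min} = -\infty$. Here I would use pointedness of $\sigma := \R_{\geq 0}M$ to produce a linear functional $\ell \colon \R^{n+1} \to \R$ which is strictly positive on $\sigma \setminus \{0\}$; in the paper's setting $M$ is finitely generated (monoidal complexes are built out of finitely generated semigroups), so $\sigma$ is a closed rational polyhedral cone and such an $\ell$ exists as a supporting hyperplane. Since $\enaught \in M \setminus \{0\}$ we have $\ell(\enaught) > 0$, and for every $k \in L$ the element $\bs + k\enaught$ lies in $\sigma$, so
\[
\ell(\bs) + k\,\ell(\enaught) = \ell(\bs + k\enaught) \geq 0 \quad\Longrightarrow\quad k \geq -\ell(\bs)/\ell(\enaught),
\]
which gives a uniform lower bound on $L$ and hence a well-defined integer $k_{\min}$.

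Setting $\as := \bs + k_{\min}\enaught$, the description $L = k_{\min} + \N$ translates immediately into $M \cap (\bs + \Z\enaught) = \as + \N\enaught$. For uniqueness, suppose $\as' \in M$ also satisfies $\as' + \N\enaught = \as + \N\enaught$; then $\as' = \as + j\enaught$ and $\as = \as' + j'\enaught$ for some $j, j' \in \N$, forcing $(j+j')\enaught = 0$ in $\Z^{n+1}$ and hence $j = j' = 0$, so $\as' = \as$. The main obstacle is really just the connection between ``pointed cone'' and an honest numerical inequality --- that is, the existence of the separating functional $\ell$; once that is in hand, the rest is bookkeeping on an upward-closed subset of $\Z$.
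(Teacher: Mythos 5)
Your argument is essentially the paper's: look at the $\enaught$-coordinates of $M \cap (\bs + \Z\enaught)$, observe that they form a nonempty upward-closed subset of $\Z$ (using $\enaught \in M$), bound them below via pointedness, and take the minimum. The only difference is the mechanism for the lower bound --- the paper argues by contradiction (if the $\enaught$-coordinates were unbounded below, then the whole coset $\bs + \Z\enaught$ would lie in $M$, forcing $\R_{\geq 0}M$ to contain the line $\R\enaught$, contradicting pointedness), whereas you produce a linear functional strictly positive on $\R_{\geq 0}M \setminus \{0\}$ and read off a numerical bound; both routes quietly rely on $\R_{\geq 0}M$ being a closed cone (true in the intended setting, where $M$ is finitely generated as part of a monoidal complex), and you earn a small point of credit for flagging this hypothesis explicitly where the paper leaves it implicit.
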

\begin{proof}
    Let $\pi_0 : \Z^{n+1} \to \Z$ be the zeroth projection map. I claim that the image $\pi_0(M \cap (\bs + \Z \enaught))$ is bounded below. If not, that would imply that $\bs + \Z \enaught$ is contained in $M$. But $\R_{\geq 0}M$ is pointed, so this cannot be the case.

    Choose $\as \in M \cap (\bs + \Z \enaught)$ so that $\pi_0(\as)$ is minimal. Then any other element of $M \cap (\bs + \Z \enaught)$ has a coordinate in $\enaught$ that is greater than or equal to $\pi_0(\as)$, and therefore $\as + \N \enaught = M \cap (\bs + \Z \enaught)$. Note that if $\as + \N\enaught = \bs + \N \enaught$, then $\as = \bs$; thus the choice of $\as$ is unique.
\end{proof}

\begin{prop}
    Let $\Sigma$ be a polyhedral rational fan of pointed cones, and let $\M$ be a monoidal complex supported on $\Sigma$ such that $\enaught \in M_\tau$ for every maximal cone $\tau \in \mathscr{F}(\Sigma)$. Let $\sigma, \tau \in \mathscr{F}(\Sigma)$. If $\as \in M_\sigma$, $\bs \in M_\tau$ are minimal in the sense of Proposition \ref{minrep}, and $\as + \Z\enaught = \bs + \Z \enaught$, then $\as = \bs$.
\end{prop}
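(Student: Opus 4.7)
The plan is to assume without loss of generality that $\pi_0(\as) \geq \pi_0(\bs)$, write $\as = \bs + k\enaught$ for some $k \in \N$, and show that $k = 0$. Suppose for contradiction that $k \geq 1$. Let $\rho := \sigma \cap \tau$, which is a common face of the cones $\sigma$ and $\tau$ (since $\Sigma$ is a fan), so by the monoidal complex axioms $M_\rho = \rho \cap M_\sigma = \rho \cap M_\tau$, and in particular $M_\sigma \cap M_\tau = M_\rho$.

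First I would observe that $\as \in M_\rho$. Since $\bs \in M_\tau$ and $\enaught \in M_\tau$ (by hypothesis on the maximal cone $\tau$) and $M_\tau$ is a semigroup, $\as = \bs + k\enaught \in M_\tau$. Combined with $\as \in M_\sigma$, this gives $\as \in M_\sigma \cap M_\tau = M_\rho$, and in particular $\as \in \rho$.

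Next I would extract two opposing facts about $\as - \enaught$. On one hand, $\as - \enaught = \bs + (k-1)\enaught \in M_\tau$ because $k - 1 \geq 0$ and $\bs, \enaught \in M_\tau$. On the other hand, the minimality of $\as$ in $M_\sigma \cap (\as + \Z\enaught)$ gives $\as - \enaught \notin M_\sigma$, since $\pi_0(\as - \enaught) < \pi_0(\as)$.

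The crux of the argument is then the face property of convex cones: if $\rho$ is a face of $\tau$ and $u + v \in \rho$ with $u, v \in \tau$, then $u, v \in \rho$. Applying this to the decomposition $\as = (\as - \enaught) + \enaught$, where $\as \in \rho$, $\enaught \in \tau$ (since $\enaught \in M_\tau$), and $\as - \enaught \in \tau$ (from the previous step), we conclude $\as - \enaught \in \rho$. But then $\as - \enaught \in \rho \cap M_\tau = M_\rho \subseteq M_\sigma$, contradicting $\as - \enaught \notin M_\sigma$. Hence $k = 0$ and $\as = \bs$. The main conceptual step is recognizing that the face property of cones—rather than any further combinatorial structure on the semigroups—is precisely what converts the containment $\as \in \rho$ into a statement about the summands $\as - \enaught$ and $\enaught$, and that the hypothesis $\enaught \in M_\tau$ is what allows both directions ($\as \in M_\tau$ and $\as - \enaught \in M_\tau$) to be obtained simultaneously.
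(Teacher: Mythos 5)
Your argument is correct, and it differs from the paper's in a modest but real way. The paper's proof uses the supporting-hyperplane characterization of the face $\sigma\cap\tau$: it picks a linear hyperplane $H$ with $H\cap\sigma = H\cap\tau = \sigma\cap\tau$, shows $\bs$ and $\enaught$ land in $H$, concludes $\as\in H\cap\sigma = \sigma\cap\tau$, and then invokes the uniqueness clause of the minimal-representative lemma with respect to $M_{\sigma\cap\tau}$ (both $\as$ and $\bs$ are minimal there since $M_{\sigma\cap\tau}\subseteq M_\sigma, M_\tau$). You instead use the intrinsic face property of cones --- that $u,v\in\tau$ and $u+v\in\rho$ forces $u,v\in\rho$ --- applied to the decomposition $\as = (\as-\enaught)+\enaught$, and then derive a direct contradiction with the minimality of $\as$ in $M_\sigma$ rather than passing through the uniqueness of minimal representatives in $M_{\sigma\cap\tau}$. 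The two approaches are close in spirit (both hinge on $\as$ landing in $M_{\sigma\cap\tau}$), but yours sidesteps the hyperplane entirely and uses minimality more directly, which makes the contradiction slightly sharper; the paper's hyperplane route is the more geometric phrasing and sets up the uniqueness clause of the preceding lemma doing the work. Either is fine; your version has the minor advantage of not needing to justify that both $\as$ and $\bs$ remain \emph{minimal} representatives inside the smaller semigroup $M_{\sigma\cap\tau}$, a step the paper leaves implicit.
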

\begin{proof}
    It is enough to show that $\as$ and $\bs$ are both in $\M_{\sigma \cap \tau}$, by uniqueness of the minimal representative in Lemma \ref{minrep}. The intersection $\sigma \cap \tau$ is a a face of both $\sigma$ and $\tau$. Therefore, $H \cap \sigma = H \cap \tau = \sigma \cap \tau$ for some hyperplane $H$.

    Assume without loss of generality that $\bs \in \as + \N \enaught$. Since $\as, \enaught \in M_\sigma$, this implies that $\bs \in M_\sigma \cap M_\tau = M_{\sigma \cap \tau} \subset H$. Clearly $\enaught \in H$, so $\as \in \bs + \Z \enaught \subset H$. But then $\as \in H \cap \sigma = H \cap \tau$, and so $\as \in M_\sigma \cap \tau = M_{\sigma \cap \tau}$ as well. 
\end{proof}

We are now ready to define $p$-toric face rings:
\begin{defn}
    Let $\Sigma \subset \R^{n+1}$ be a polyhedral rational fan of pointed cones, and let $\M$ be a monoidal complex supported on $\Sigma$ such that $\enaught \in M_\tau$ for every maximal cone $\tau \in \mathscr{F}(\Sigma)$. For each maximal cone $\tau \in \mathscr{F}(\Sigma)$, let $|M_\tau|_0 \subset M_\tau$ be the collection of minimal representatives of equivalence classes in $\Z M/ \Z \enaught$ in the sense of Lemma \ref{minrep}. Define $|\M|_0$ to be the union of the $|M_\tau|_0$. Then the \emph{\boldmath $p$-toric face ring} defined by $\M$ is given as a $\Zp$-module by
    \[
       \Zp[\overline{\M}] := \bigoplus_{\ts \in |\M|_0} \Zp p^{t_0} x_1^{t_1} \hdots x_n^{t_n},
    \]
    with the ring structure induced by the rule
    \[
    p^{a_0} x_1^{a_1} \hdots x_n^{a_n} \cdot p^{b_0} x_1^{b_1} \hdots x_n^{b_n} := \begin{cases}
        p^{a_0+b_0} x_1^{a_1+b_1} \hdots x_n^{a_n+b_n} & \text{if } \exists \sigma \;\; \as, \bs \in M_\sigma\\
        0 & \text{otherwise}
    \end{cases}
    \]
\end{defn}

\begin{ex}
    Let $\tau_i = \R_{\geq 0} M_{\tau_i}$ for semigroups
    \begin{align*}
        M_{\tau_1} &= \Span_{\N} \left\{ \begin{pmatrix}
            1\\0\\0
        \end{pmatrix}, \begin{pmatrix}
            2\\3\\0
        \end{pmatrix}, \begin{pmatrix}
            2\\2\\2
        \end{pmatrix} \right\} &
        M_{\tau_2} &= \Span_{\N} \left\{ \begin{pmatrix}
            1\\0\\0
        \end{pmatrix}, \begin{pmatrix}
            2\\2\\2
        \end{pmatrix}, \begin{pmatrix}
            0\\0\\3
        \end{pmatrix} \right\}
    \end{align*}
    Then $\tau_1, \tau_2$ are the faces of a polyhedral rational fan $\Sigma \subset \R^3$ and these $M_{\tau_i}$ induce a monoidal complex $\M$ on $\Sigma$. In this case, the calculation of $|\M|_0$ is not difficult. One can check that
    \begin{align*}
        |M_{\tau_1}|_0 &= \Span_{\N} \left\{ \begin{pmatrix}
            2\\3\\0
        \end{pmatrix}, \begin{pmatrix}
            2\\2\\2
        \end{pmatrix} \right\} &\text{and }
        |M_{\tau_2}|_0 &= \Span_{\N} \left\{ \begin{pmatrix}
            2\\2\\2
        \end{pmatrix}, \begin{pmatrix}
            0\\0\\3
        \end{pmatrix} \right\}.
    \end{align*}
    The $p$-toric face ring defined by $\M$ is therefore
    \begin{align*}
        \Zp[\overline{\M}] &= \bigoplus_{\ts \in |\M|_0} \Zp p^{t_0}x^{t_1}y^{t_2}\\
        &= \begin{matrix}
             & \Zp p^2 x^3 & \oplus & \Zp p^4 x^6     & \oplus & \Zp p^4 x^5 y^2 & \oplus & \Zp p^6 x^9     & \oplus & \hdots \\
         \oplus & \Zp      & \oplus & \Zp p^2 x^2 y^2 & \oplus & \Zp p^4 x^4 y^4 & \oplus & \Zp p^6 x^6 y^6 & \oplus & \hdots\\
         \oplus & \Zp y^3  & \oplus & \Zp y^6         & \oplus & \Zp p^2 x^2 y^5 & \oplus & \Zp y^9 & \oplus & \hdots 
        \end{matrix}
    \end{align*}
    I have arranged the sum so that the bottom row is $P_{\tau_1} = (p^{t_0}x^{t_1}y^{t_2} : \ts \notin M_{\tau_1})$, and the top row is $P_{\tau_2} = (p^{t_0}x^{t_1}y^{t_2}  : \ts \notin M_{\tau_2})$. The embedding of $\Zp[\overline{M_{\tau_1}}]$ into $\Zp[\overline{\M}]$ is given by the top two rows of the direct sum, and the embedding of $\Zp[\overline{M_{\tau_2}}]$ into $\Zp[\overline{\M}]$ is given by the bottom two rows. We get that $\Zp[\overline{M_{\tau_i}}] \hookrightarrow \Zp[\overline{\M}] \twoheadrightarrow \Zp[\overline{\M}]/P_{\tau_i} = \Zp[\overline{M_{\tau_i}}]$ are $\Zp$-algebra retracts for each $i$, and $P_{\tau_1} \cap P_{\tau_2} = 0$. Therefore $\Zp[\overline{M}]$ is realized by retracts which are $p$-semigroup rings.
\end{ex}

I will now examine the structure of $p$-toric face rings in general. We shall see that $p$-toric face rings are the images of toric face rings under evaluation of $x_0$ at $p$, and that the algebra retract structure descends under this map. This will allow us to conclude that Conjecture \ref{sopcont} holds for $p$-toric face rings.

\begin{prop}
    \label{toricfacekernel}
    Let $\Sigma$ be a polyhedral rational fan of pointed cones, and let $\M$ be a monoidal complex supported on $\Sigma$ such that $\enaught \in M_\tau$ for every maximal cone $\tau \in \mathscr{F}(\Sigma)$. Let $\phi$ be the $\Zp$-module map
    \[
         \phi: \Zp[\M] \twoheadrightarrow \Zp[\overline{\M}] \qquad \xs^{\ts} \mapsto p^{t_0}x_1^{t_1}\hdots x_n^{t_n}.
    \]
    Then $\phi$ is in fact a ring homomorphism with kernel $(x_0 - p)\Zp[\M]$.
\end{prop}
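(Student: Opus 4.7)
The plan is to verify multiplicativity of $\phi$ on the basis $\{\xs^{\ts} : \ts \in |\M|\}$ of $\Zp[\M]$ and then identify the kernel by exploiting the $\Z^n$-grading obtained by forgetting the zeroth coordinate of the $\Z^{n+1}$-grading. First, for $\ts \in |\M|$ pick any maximal cone $\tau$ with $\ts \in M_\tau$ and write $\ts = \as + k\enaught$ for the minimal representative $\as \in |M_\tau|_0$ guaranteed by Proposition \ref{minrep}. Then $\phi(\xs^{\ts}) = p^{t_0}x_1^{t_1}\cdots x_n^{t_n} = p^k \cdot (p^{a_0}x_1^{a_1}\cdots x_n^{a_n})$ lies in the summand $\Zp \cdot p^{a_0}x_1^{a_1}\cdots x_n^{a_n}$ of $\Zp[\overline{\M}]$, and the summand it lands in is independent of $\tau$ by the proposition immediately preceding the definition of $\Zp[\overline{\M}]$.

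Multiplicativity splits into two cases. If $\as, \bs$ share some $M_\sigma$, pick a maximal $\tau \supseteq \sigma$: the minimal representatives $\as', \bs' \in |M_\tau|_0$ both lie in $M_\tau$, so their product in $\Zp[\overline{\M}]$ is nonzero, and a direct computation gives that both $\phi(\xs^\as \cdot \xs^\bs)$ and $\phi(\xs^\as) \cdot \phi(\xs^\bs)$ equal $p^{a_0+b_0}x_1^{a_1+b_1}\cdots x_n^{a_n+b_n}$ in $\Zp[\overline{\M}]$. If instead $\as, \bs$ share no common $M_\sigma$, then $\xs^\as \cdot \xs^\bs = 0$ in $\Zp[\M]$, and $\phi(\xs^\as)\phi(\xs^\bs)$ must also vanish: if the minimal representatives $\as', \bs'$ did share some $M_\rho$, then choosing maximal $\tau \supseteq \rho$ we would have $\enaught \in M_\tau$ and therefore $\as = \as' + k_a\enaught$ and $\bs = \bs' + k_b\enaught$ both in $M_\tau$, contradicting the hypothesis.

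The inclusion $(x_0 - p)\Zp[\M] \subseteq \ker\phi$ is immediate since $\phi(x_0 - p) = p - p = 0$. For the reverse, both $\Zp[\M]$ and $\Zp[\overline{\M}]$ inherit a $\Z^n$-grading from the $\Z^{n+1}$-grading by forgetting the zeroth coordinate, $\phi$ respects this $\Z^n$-grading, and $x_0 - p$ is $\Z^n$-homogeneous of degree $0$, so $(x_0 - p)\Zp[\M]$ is $\Z^n$-graded. Hence it suffices to check the inclusion in each homogeneous piece. Fix $\ts \in \Z^n$ and let $T_\ts = \{t_0 \in \Z : (t_0, \ts) \in |\M|\}$. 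Using Proposition \ref{minrep} together with the standing hypothesis that $\enaught \in M_\tau$ for every maximal $\tau$, one shows that (whenever nonempty) $T_\ts = a_0 + \N$ where $a_0 = \min T_\ts$; hence the degree-$\ts$ part of $\Zp[\M]$ is a free $\Zp$-module on $\{x_0^{a_0+k}\xs^{\ts} : k \geq 0\}$, the degree-$\ts$ part of $\Zp[\overline{\M}]$ is $\Zp \cdot p^{a_0}\xs^{\ts}$, and $\phi$ restricted to this piece is isomorphic to the evaluation map $\Zp[y] \to \Zp$, $y \mapsto p$, whose kernel is $(y-p)\Zp[y]$. Pulling back, any element of the degree-$\ts$ kernel of $\phi$ equals $(x_0 - p) \cdot g$ for some $g$ in the degree-$\ts$ component of $\Zp[\M]$.

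The subtlest step is justifying $T_\ts = a_0 + \N$: one must argue that adding $\enaught$ preserves membership in $|\M|$, which requires going up from any $\sigma \in \Sigma$ containing a lift of $\ts$ to a maximal $\tau \supseteq \sigma$ where $\enaught \in M_\tau$, and invoking the face compatibility axiom to see that the local minimum inside $M_\tau$ is really the global minimum across $|\M|$. Everything else is bookkeeping around the two direct sum decompositions and the polynomial evaluation $x_0 \mapsto p$.
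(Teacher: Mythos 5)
Your proof is correct, and the kernel computation takes a genuinely different route from the paper's. The paper observes that the inclusion $\iota : \Zp[\M] \hookrightarrow T = \Zp[x_0^{\pm 1},\hdots,x_n^{\pm 1}]$ is a morphism of $\Zp[x_0]$-modules (because $\enaught \in M_\tau$ for every maximal $\tau$), notes $\iota(\ker\phi) = (x_0-p)T \cap \iota(\Zp[\M])$, and then invokes the induction on $x_0$-degree from the proof of Theorem~\ref{phi}(3), pointing out that that argument only used the $\Zp[x_0]$-module structure of the graded ring $R'$ inside $S'$. You instead pass to the coarser $\Z^n$-grading, show $(x_0-p)\Zp[\M]$ and $\ker\phi$ are both $\Z^n$-graded, and compute each graded piece outright: the observation that $T_\ts = a_0 + \N$ (a consequence of Proposition~\ref{minrep}, the uniqueness of minimal representatives across maximal cones, and the face-compatibility axiom) shows the degree-$\ts$ component of $\Zp[\M]$ is a free rank-one $\Zp[x_0]$-module, on which $\phi$ is literally the evaluation $\Zp[y] \to \Zp$, $y \mapsto p$. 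The underlying content is similar\textemdash both arguments ultimately induct in the $x_0$-direction\textemdash but your framing is more self-contained and makes the structure transparent: the relevant freeness is made explicit and the kernel calculation reduces to the one-variable polynomial-evaluation fact rather than to a cited earlier proof. Your treatment of multiplicativity is also more detailed than the paper's (which dismisses it as clear), and it correctly identifies the one nonobvious direction\textemdash that if the images $\phi(\xs^{\as})$ and $\phi(\xs^{\bs})$ had a nonzero product, then $\as$ and $\bs$ would already share a maximal cone because $\enaught$ does.
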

\begin{proof}
    To show that evaluation of $x_0$ at $p$ is a ring homomorphism on $\Zp[\M]$, it is enough to check that $\phi(1) = 1$ and $\phi$ respects multiplication of the $\Zp$\babelhyphen{nobreak}module generators $\{\xs^{\ts} \; : \; \ts \in |\M| \}.$ These are both relatively clear. It remains only to check that $\ker \phi = (x_0 - p)\Zp[\M]$.
    
     Let $T := \Zp[x_0^{\pm 1}, \hdots, x_n^{\pm 1}]$. I claim that the natural inclusion of $\Zp$-modules $\iota : \Zp[\M] \hookrightarrow T$ is in fact a $\Zp[x_0]$-module morphism. Indeed, $\Zp[\M]$ is generated as a $\Zp$\babelhyphen{nobreak}module by monomials $\xs^{\ts}$ where $\ts \in M_\tau$ for some maximal cone $\tau$, and since $\enaught \in M_\tau$ it follows that
    \[
       \iota (x_0 \xs^{\ts}) = \iota (\xs^{\ts + \enaught}) = \xs^{\ts + \enaught} = x_0 \iota(\xs^{\ts}).
    \]
    Remark that $\ker\phi \subset \Zp[\M]$ is an ideal, so in particular it is a $\Zp[x_0]$\babelhyphen{nobreak}submodule. Viewing $\phi$ as a map of $\Zp$-modules, it is clear that $\iota(\ker \phi) = (x_0 - p)T \cap \iota(\Zp[\M])$. In Theorem \ref{phi}(3), I showed that the kernel of $\phi$ restricted to a semigroup ring containing $x_0$ is the principal ideal generated by $(x_0 - p)$. The proof, however, used only the $\Zp[x_0]$-module structure! Therefore, by identical reasoning, it follows that
    \[
       \iota(\ker \phi) = (x_0 - p) \iota(\Zp[\M]) = \iota((x_0 - p)\Zp[\M]),
    \]
    and so $\ker \phi = (x_0 - p)\Zp[\M]$.
\end{proof}

\begin{prop}
    \label{ptoricretract}
    Let $\Sigma$ be a polyhedral rational fan of pointed cones, and let $\M$ be a monoidal complex supported on $\Sigma$ such that $\enaught \in M_\tau$ for every maximal cone $\tau \in \mathscr{F}(\Sigma)$. Then the $p$-toric face ring $\Zp[\overline{\M}]$ is realized by retracts $\Zp[\overline{M_\tau}]$ for $\tau \in \mathscr{F}(\Sigma)$.
\end{prop}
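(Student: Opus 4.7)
The plan is to descend the retract structure of the toric face ring $\Zp[\M]$ through the surjection $\phi : \Zp[\M] \twoheadrightarrow \Zp[\overline{\M}]$ of Proposition \ref{toricfacekernel}, whose kernel is $(x_0 - p)\Zp[\M]$. By Remark \ref{toricretract}, each maximal cone $\tau \in \mathscr{F}(\Sigma)$ carries retract maps $\iota_\tau' : \Zp[M_\tau] \hookrightarrow \Zp[\M]$ and $\pi_\tau' : \Zp[\M] \twoheadrightarrow \Zp[M_\tau]$. Because $\enaught \in M_\tau$, both are $\Zp[x_0]$-algebra homomorphisms, so after quotienting by $x_0 - p$ (using the analogous surjection $\phi_\tau : \Zp[M_\tau] \twoheadrightarrow \Zp[\overline{M_\tau}]$ from Proposition \ref{toricfacekernel}) they descend to maps $\iota_\tau : \Zp[\overline{M_\tau}] \to \Zp[\overline{\M}]$ and $\pi_\tau : \Zp[\overline{\M}] \to \Zp[\overline{M_\tau}]$ with $\pi_\tau \circ \iota_\tau = \text{id}$. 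Injectivity of $\iota_\tau$ follows from a standard retract argument: if $\iota_\tau'(f) = (x_0-p) h$, applying $\pi_\tau'$ gives $f = (x_0-p) \pi_\tau'(h) \in (x_0-p)\Zp[M_\tau]$.

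The main step is condition (1) of Definition \ref{retracts}, the injectivity of $\Zp[\overline{\M}] \to \prod_\tau \Zp[\overline{M_\tau}]$. Suppose $\bar f$ lies in the kernel, and lift it to $f \in \Zp[\M]$, so that $\pi_\tau'(f) = (x_0-p) g_\tau$ for each maximal $\tau$. My goal is to assemble $(g_\tau)$ into a single $g \in \Zp[\M]$; once this is done, $f - (x_0-p) g$ lies in $\bigcap_\tau \ker \pi_\tau' = \bigcap_\tau P_\tau = 0$ by Remark \ref{retractass}, yielding $\bar f = 0$. Reading off Definition \ref{toricfacering}, the image of $\Zp[\M]$ in $\prod_\tau \Zp[M_\tau]$ is precisely the subring of tuples whose components have the same image in $\Zp[M_\sigma]$ for every pair $\sigma = \tau_1 \cap \tau_2$. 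To verify this compatibility for $(g_\tau)$, apply the natural projection $\Zp[M_{\tau_i}] \twoheadrightarrow \Zp[M_\sigma]$ (killing monomials outside $M_\sigma$) to both sides of $\pi_{\tau_i}'(f) = (x_0-p) g_{\tau_i}$. The two left-hand sides reduce to the common image of $f$ in $\Zp[M_\sigma]$, so $(x_0 - p)$ times the difference of the images of $g_{\tau_1}$ and $g_{\tau_2}$ vanishes in $\Zp[M_\sigma]$. Since $M_\sigma \subseteq \Z^{n+1}$ is a subsemigroup, $\Zp[M_\sigma]$ is a domain in which $x_0 - p$ is nonzero, so the two images agree. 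This gluing step is the technical heart of the argument.

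For condition (2), fix $\tau_0 \in \mathscr{F}(\Sigma)$. Irredundancy of the retract decomposition of $\Zp[\M]$ provides some $\ts \in M_{\tau_0} \setminus \bigcup_{\tau \neq \tau_0} M_\tau$. Replacing $\ts$ by its minimal representative $\ts - k\enaught \in |M_{\tau_0}|_0$ (Proposition \ref{minrep}) preserves this non-containment: if $\ts - k\enaught \in M_\tau$ for some $\tau \neq \tau_0$, then using $\enaught \in M_\tau$ we would get $\ts = (\ts - k\enaught) + k\enaught \in M_\tau$, contradicting the choice of $\ts$. Assume henceforth that $\ts$ is minimal; the same reasoning shows $\ts - \enaught \notin M_\tau$ for every maximal $\tau$, hence $\ts - \enaught \notin |\M|$. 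In any hypothetical equation $\xs^{\ts} = (x_0 - p)\sum c_\as \xs^\as$ in $\Zp[\M]$, comparing coefficients of $\xs^{\ts}$ and using $\ts - \enaught \notin |\M|$ forces $-p c_{\ts} = 1$ in $\Zp$, which is impossible. Hence $\xs^{\ts} \notin (x_0-p)\Zp[\M]$, so $\phi(\xs^{\ts}) \neq 0$ in $\Zp[\overline{\M}]$, while $\xs^{\ts} \in P_\tau$ for every $\tau \neq \tau_0$ gives $\pi_\tau(\phi(\xs^{\ts})) = 0$ for all such $\tau$, witnessing the desired irredundancy.
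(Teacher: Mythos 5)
Your proof is correct and follows the same overall strategy as the paper's: descend the toric face ring's retract structure through the evaluation map $\phi$ of Proposition \ref{toricfacekernel}. But where the paper's proof is terse, yours carefully justifies the two steps that the paper essentially asserts without argument. The paper establishes $\ker\overline{\pi_\tau} = (P_\tau + \ker\phi)/\ker\phi$ and then simply writes $\bigcap_\tau(P_\tau + \ker\phi) = (\bigcap_\tau P_\tau) + \ker\phi$; intersection does not distribute over sum in general, and this equality is exactly the nontrivial content of condition (1). Your gluing argument proves it directly: lift $\bar f$ to $f$, write $\pi_\tau'(f) = (x_0-p)g_\tau$, verify compatibility on pairwise intersections $M_{\tau_1 \cap \tau_2}$ (using that each $\Zp[M_\sigma]$ is a domain so $g_\tau$ is uniquely determined there), assemble a global $g$, and conclude $f - (x_0-p)g \in \bigcap P_\tau = 0$. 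Similarly, the paper does not address irredundancy beyond declaring the decomposition irredundant; your argument produces, for each $\tau_0$, an explicit nonzero element $\phi(\xs^{\ts})$ killed by every other $\overline{\pi_\tau}$, using the minimal-representative trick from Proposition \ref{minrep} to guarantee $\ts - \enaught \notin |\M|$ and hence $\phi(\xs^{\ts}) \neq 0$. Your injectivity argument for $\iota_\tau$ (via the retract identity $\pi_\tau' \circ \iota_\tau' = \mathrm{id}$) is likewise a cleaner justification of the paper's claim that $\ker(\phi \circ \iota_\tau) = \ker\phi_\tau$. In short: same route, but you supply the proofs the paper leaves implicit.
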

\begin{proof}
    For every $\tau \in \mathscr{F}(\Sigma)$, there are algebra maps
    \begin{center}
        \begin{tikzcd}
	{\Zp[M_\tau]} & {\Zp[\M]} & {\Zp[M_\tau]}
	\arrow["{\iota_\tau}", hook, from=1-1, to=1-2]
	\arrow["{\pi_\tau}", two heads, from=1-2, to=1-3]
        \end{tikzcd},
    \end{center}
    where the kernel of this surjection is $P_\tau = (\xs^{\ts} : \ts \notin M_\tau) \Zp[\M]$. Let $\phi$ denote the evaluation map $x_0 \mapsto p$, and let $\phi_\tau := \phi|_{\Zp[M_\tau]}$. We have $\ker(\phi \circ \iota_\tau) = \ker(\phi_\tau)$, so there is an injective algebra map $\overline{\iota_\tau}$ making the diagram commute:
    \[
       \begin{tikzcd}[ampersand replacement=\&]
	{\Zp[M_\tau]} \& {\Zp[\M]} \\
	{\Zp[\overline{M_\tau}]} \& {\Zp[\overline{\M}]}
	\arrow["{\iota_\tau}", hook, from=1-1, to=1-2]
	\arrow["{\overline{\iota_\tau}}"', dashed, hook, from=2-1, to=2-2]
	\arrow["{\phi_\tau}"', two heads, from=1-1, to=2-1]
	\arrow["\phi", two heads, from=1-2, to=2-2]
       \end{tikzcd}
    \]
    Since $\ker \phi_\tau = (x_0 - p)\Zp[M_\tau]$, we also have
    \[
        \ker(\phi_\tau \circ \pi_\tau) = P_\tau + (x_0-p)\Zp[\M] \supset \ker \phi \qquad \text{by Proposition \ref{toricfacekernel}},
    \]
    and therefore there is an algebra map $\overline{\pi_\tau}$ making the diagram commute:
    \[
        \begin{tikzcd}[ampersand replacement=\&]
	{\Zp[\M]} \& {\Zp[M_\tau]} \\
	{\Zp[\overline{\M}]} \& {\Zp[\overline{M_\tau}]}
	\arrow["{\pi_\tau}", two heads, from=1-1, to=1-2]
	\arrow["{\overline{\pi_\tau}}"', dashed, from=2-1, to=2-2]
	\arrow["{\phi_\tau}", two heads, from=1-2, to=2-2]
	\arrow["\phi"', two heads, from=1-1, to=2-1]
        \end{tikzcd}
    \]
    Note that $\overline{\pi_\tau}$ must be surjective, since $\overline{\pi_\tau} \circ \phi = \phi_\tau \circ \pi_\tau$ is surjective. Therefore $\Zp[\overline{M_\tau}]$ is an algebra retract of $\Zp[\M]$ for every $\tau \in \mathscr{F}(\Sigma)$.

    Finally, note that $\ker \overline{\pi_\tau} = (P_\tau + \ker \phi )/ \ker \phi$, so
    \begin{align*}
        \bigcap_{\tau \in \mathscr{F}(\Sigma)} \ker \overline{\pi_\tau} &= \frac{ \bigcap (P_\tau + \ker \phi) }{\ker \phi} = \frac{(\bigcap P_\tau ) + \ker \phi}{\ker \phi} = \frac{\ker \phi}{\ker \phi}= 0
    \end{align*}
    is the irredundant primary decomposition of $0$ in $\Zp[\overline{\M}]$.
\end{proof}

\begin{thm}
    \label{ptoric}
    Let $R = \Zp[\overline{\M}]$ be a $p$-toric face ring. Then $R$ satisfies Conjecture \ref{sopcont}, in the sense that any length $d$ sequence in $R$ with a minimal prime of height $d$ is a Q-sequence.
\end{thm}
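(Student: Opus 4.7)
The plan is to combine the algebra retract structure of $p$-toric face rings established in Proposition \ref{ptoricretract} with the Q-sequence result for $p$-semigroup rings from Theorem \ref{bigsec5thm}, exactly mirroring how the toric face ring case was handled earlier in this section.

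First, I would invoke Proposition \ref{ptoricretract}, which shows that $R = \Zp[\overline{\M}]$ is realized by the collection of $\Zp$-algebra retracts $\{\Zp[\overline{M_\tau}] : \tau \in \mathscr{F}(\Sigma)\}$, where $\mathscr{F}(\Sigma)$ is the set of maximal cones of $\Sigma$. Next, I would observe that each $\Zp[\overline{M_\tau}]$ is a pointed affine $p$-semigroup ring: the semigroup $M_\tau$ is finitely generated by the definition of a monoidal complex, it contains $\enaught$ by the standing hypothesis on $\M$, and $\R_{\geq 0} M_\tau = \tau$ is pointed since $\Sigma$ is a fan of pointed cones.

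Having identified each retract as a pointed affine $p$-semigroup ring, Theorem \ref{bigsec5thm} tells us that each $\Zp[\overline{M_\tau}]$ satisfies Conjecture \ref{sopcont}: any length $d$ sequence generating an ideal of height $d$ in $\Zp[\overline{M_\tau}]$ is a Q-sequence. Finally, Proposition \ref{retractQseq} applied to the collection $\{\Zp[\overline{M_\tau}]\}_{\tau \in \mathscr{F}(\Sigma)}$ transfers this property to $R$ itself, completing the proof.

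There is no real obstacle here; all of the substantive work was done in the previous sections. The one point to double-check is that the retracts $\Zp[\overline{M_\tau}]$ really do fall within the scope of the pointed affine $p$-semigroup ring results of Section 5, but this follows immediately from unwinding Definition \ref{psemigroupdef} and the hypotheses placed on $\M$ in the definition of a $p$-toric face ring.
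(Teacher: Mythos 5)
Your proposal is correct and follows exactly the same route as the paper: invoke Proposition \ref{ptoricretract} to realize $R$ by $p$-semigroup ring retracts, apply Theorem \ref{bigsec5thm} to each retract, and conclude via Proposition \ref{retractQseq}. The extra step you include, verifying explicitly that each $\Zp[\overline{M_\tau}]$ is a pointed affine $p$-semigroup ring, is a nice bit of diligence that the paper leaves implicit.
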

\begin{proof}
    I have shown that $R$ is realized by retracts which are affine $p$-semigroup rings (Proposition \ref{ptoricretract}), and that affine $p$-semigroup rings satisfy Conjecture \ref{sopcont} (Theorem \ref{bigsec5thm}). Therefore, by Proposition \ref{retractQseq}, it follows that $R$ satisfies Conjecture \ref{sopcont} as well.
\end{proof}

\bibliographystyle{abbrv}
\bibliography{sections/cas_refs}

\end{document}